\definecolor {processblue}{cmyk}{0.96,0,0,0}
\newtheorem{lemma}{Lemma}[section]
\newtheorem{corollary}[lemma]{Corollary}
\newtheorem{theorem}[lemma]{Theorem}
\newtheorem{proposition}[lemma]{Proposition}
\theoremstyle{definition}
\newtheorem{remark}[lemma]{Remark}
\newtheorem{definition}[lemma]{Definition}
\newtheorem{example}[lemma]{Example}
\newcommand{\bsm}{\begin{smallmatrix}}
\newcommand{\esm}{\end{smallmatrix}}
\newcommand{\bbm}{\begin{matrix}}
\newcommand{\ebm}{\end{matrix}}
\begin{document}

\title{right $n$-Nakayama algebras and their representations}

\author{Alireza Nasr-Isfahani}
\address{Department of Mathematics\\
University of Isfahan\\
P.O. Box: 81746-73441, Isfahan, Iran\\ and School of Mathematics, Institute for Research in Fundamental Sciences (IPM), P.O. Box: 19395-5746, Tehran, Iran}
\email{nasr$_{-}$a@sci.ui.ac.ir / nasr@ipm.ir}
\author{Mohsen Shekari}
\address{Department of Mathematics\\
University of Isfahan\\
P.O. Box: 81746-73441, Isfahan, Iran}
\email{mshekari@sci.ui.ac.ir}

\subjclass[2000]{{16G20}, {16G60}, {16G70}, {16D70}, {16D90}}

\keywords{Uniserial module, Representation-finite algebra, Nakayama algebra, Indecomposable module, Almost split sequence}

\begin{abstract} In this paper we study right $n$-Nakayama algebras. Right $n$-Nakayama algebras appear naturally in the study of representation-finite algebras. We show that an artin algebra $\Lambda$ is representation-finite if and only if $\Lambda$ is right $n$-Nakayama for some positive integer $n$. We classify hereditary right $n$-Nakayama algebras. We also define right $n$-coNakayama algebras and show that an artin algebra $\Lambda$ is right $n$-coNakayama if and only if $\Lambda$ is left $n$-Nakayama. We then study right $2$-Nakayama algebras. We show how to compute all the indecomposable modules and almost split sequences over a right $2$-Nakayama algebra. We end by classifying finite dimensional right $2$-Nakayama algebras in terms of their quivers with relations.

\end{abstract}

\maketitle

\section{Introduction}
Let $R$ be a commutative artinian ring. An $R$-algebra $\Lambda$ is called an artin algebra if
$\Lambda$ is finitely generated as an $R$-module.
Given an artin algebra $\Lambda$, it is a quite natural question
to ask for the classification of all the indecomposable finitely generated right $\Lambda$-modules. Only for few
classes of algebras such a classification is known, one of the first such class were the Nakayama algebras. A Nakayama algebra $\Lambda$ is an algebra such that the
indecomposable projective right $\Lambda$-modules as well as the indecomposable injective right $\Lambda$-modules are uniserial. This then implies
that all the indecomposable right $\Lambda$-modules are uniserial. Nakayama algebras were studied by Tadasi Nakayama who called them generalized uniserial rings \cite{Na, Na1}. A right $\Lambda$-module $M$ is called uniserial if it has a unique composition series. Uniserial modules are the simplest indecomposables and this makes it interesting to understand their role in the category $mod(\Lambda)$ of finitely generated right $\Lambda$-modules.

An artin algebra $\Lambda$ is said to be representation-finite, provided there
are only finitely many isomorphism classes of indecomposable right $\Lambda$-modules.
In representation theory, representation-finite algebras are of
particular importance since in this case one has a complete combinatorial description
of the module category in terms of the Auslander-Reiten quiver. The class of Nakayama algebras is one of the fundamental
classes of representation-finite algebras whose representation theory is completely
understood.

In this paper we introduce the notion of $n$-factor serial modules. We say that a non-uniserial right $\Lambda$-module $M$ of length $l$ is $n$-factor serial ($l\geq n>1$), if $\frac{M}{\mathit{rad}^{l-n}(M)}$ is uniserial and $\frac{M}{\mathit{rad}^{l-n+1}(M)}$ is not uniserial. In some sense, $n$ is an invariant that measures how far $M$ is from being uniserial. We say that an artin algebra $\Lambda$ is right $n$-Nakayama if every finitely generated indecomposable right $\Lambda$-module is $i$-factor serial for some $1 \leqslant i \leqslant n$ and there exists at least one indecomposable $n$-factor serial right $\Lambda$-module. We show that right $n$-Nakayama algebras create a nice partition for representation-finite algebras. More precisely, we show that an artin algebra $\Lambda$ is representation-finite if and only if $\Lambda$ is right $n$-Nakayama for some positive integer $n$. By using this fact we give another proof of the first Brauer-Thrall conjecture for artin algebras. Recall that the first Brauer-Thrall conjecture as established by Roiter \cite{Ro} asserts that, any artin algebra with infinitely many
isomorphism classes of indecomposable modules of finite length has indecomposable modules of arbitrary large finite length. We show that any artin algebra with infinitely many
isomorphism classes of indecomposable modules of finite length has indecomposable $n$-factor serial modules for arbitrary large positive integer $n$. This improves the assertion of the first
Brauer-Thrall conjecture. The second Brauer-Thrall conjecture says that, an artin algebra $\Lambda$ of cardinality $\aleph\geq \aleph_0$, where $\aleph_0$ stands for the cardinality of a countable set,  is representation-infinite if and only if there exist infinitely many positive integers $n_i$ with $\aleph$ non-isomorphic indecomposable right $\Lambda$-modules of length $n_i$. The conjecture was solved affirmatively by Nazarova and Roiter \cite{NR} (see also \cite{B, F} and \cite{R}) for finite dimensional algebras over algebraically closed fields, but the conjecture is still open in general. We show that the second Brauer-Thrall conjecture is
equivalent to the statement that, an artin algebra $\Lambda$ of cardinality $\aleph\geq \aleph_0$ is either right $n$-Nakayama for some positive integer $n$ or there exist infinitely many positive integers $n_i$ with $\aleph$ non-isomorphic indecomposable $n_i$-factor serial right $\Lambda$-modules.

In the partition of representation-finite algebras, which is created by right $n$-Nakayama algebras, after the class of Nakayama algebras the second part is the class of right $2$-Nakayama algebras. Let $\Lambda$ be an artin algebra which is not a Nakayama algebra. We show that $\Lambda$ is right $2$-Nakayama if and only if every indecomposable non-projective right $\Lambda$-module is uniserial. By using this fact we classify indecomposable modules and almost split sequences over right $2$-Nakayama algebras. In \cite{NS} and \cite{NS1} we study right $3$-Nakayama and right $4$-Nakayama algebras. We compute indecomposable modules and almost split sequences over right $3$-Nakayama and right $4$-Nakayama algebras. Also we classify finite dimensional right $3$-Nakayama and right $4$-Nakayama algebras in terms of their quivers with relations.

The paper is organized as follows. In Section 2 we first introduce the $n$-factor serial right modules and right $n$-Nakayama algebras. Then by using the partition of representation-finite algebras, which creates by right $n$-Nakayama algebras, we improve the assertion of the first
Brauer-Thrall conjecture. We also provide an equivalence statement for the second Brauer-Thrall conjecture.

In Section 3 we give a characterization of right $n$-Nakayama hereditary finite dimensional algebras. Let $\Lambda=kQ$ be a finite dimensional hereditary algebra. If $Q$ is a quiver of type $\mathbb{A}_n$, then we show that $\Lambda$ is either Nakayama or right $(n-1)$-Nakayama or right $n$-Nakayama, depends to the orientation of $Q$. If $Q$ is a quiver of type $\mathbb{D}_n$, $\mathbb{E}_6$, $\mathbb{E}_7$ or $\mathbb{E}_8$, then we show that $\Lambda$ is $(2n-3)$-Nakayama, $11$-Nakayama, $17$-Nakayama or $29$-Nakayama, respectively.

In Section 4 we introduce the $n$-cofactor serial right modules and right $n$-coNakayama algebras. By using the standard duality we show that $M$ is an $n$-factor serial left $\Lambda$-module if and only if $\mathit{D}(M)$ is an $n$-cofactor serial right $\Lambda$-module. Then we show that an artin algebra $\Lambda$ is left $n$-Nakayama if and only if $\Lambda$ is right $n$-coNakayama.

 In the final section, we focus on right $2$-Nakayama algebras. We show that if every indecomposable non-projective right $\Lambda$-module is uniserial, then $\Lambda$ is either Nakayama or right $2$-Nakayama algebra. Also we give a characterization of indecomposable modules and almost split sequences over a right $2$-Nakayama algebra. Finally we describe finite dimensional right $2$-Nakayama algebras in terms of their quivers with relations.

\subsection{Notation }

Let $\Lambda$ be an artin algebra, we denote by $mod (\Lambda)$ the category of finitely generated right $\Lambda$-modules, $ind(\Lambda)$ the set of indecomposable right $\Lambda$-modules, $ind.proj(\Lambda)$ the set of indecomposable projective right $\Lambda$-modules and $ind.inj(\Lambda)$ the set of indecomposable injective right $\Lambda$-modules, one from each isomorphism class. Throughout this paper all modules are finitely generated right $\Lambda$-modules and all fields are algebraically closed fields unless otherwise stated. For a $\Lambda$-module $M$, we denote by $soc(M)$, $top(M)$, $rad(M)$, $\textit{l}(M)$, $\textit{ll}(M)$ and $\mathbf{dim} M$ its socle, top, radical, length, Loewy length and dimension vector, respectively.
Let $Q=(Q_0, Q_1, s, t)$ be a quiver and $\alpha:i\rightarrow j$ be an arrow in $Q$. One introduces a formal inverse $\alpha^{-1}$ with $s({\alpha}^{-1}) = j$
and $t(\alpha^{-1}) = i$. An edge in $Q$ is an arrow or the inverse of an arrow. To each vertex $i$ in $Q$,
one associates a trivial path, also called trivial walk, $\varepsilon_i$ with $s(\varepsilon_i) = t(\varepsilon_i) = i$. A non-trivial
walk $w$ in $Q$ is a sequence of edges $w=c_1c_2\cdots c_n$ such that $t(c_i) = s(c_{i+1})$ for each $i$, whose inverse $w^{-1}$ is defined to be
the sequence $w^{-1}=c_n^{-1}c_{n-1}^{-1}\cdots c_1^{-1}$. A walk $w$ is called reduced if $c_{i+1}\neq c_i^{-1}$ for each $i$. For $i\in Q_0$, we denote by $i^+$ and $i^-$ the set of arrows starting in $i$ and the set of arrows ending in $i$, respectively.

\section{right $n$-Nakayama algebras}

\begin{definition}
Let $\Lambda$ be an artin algebra and $M$ be a right $\Lambda$-module of length $l$.
\begin{itemize}
\item[$(1)$] $M$ is called  $1$-factor serial (uniserial) if $M$ has a unique composition series.
\item[$(2)$] Let $l\geq n>1$. We say that $M$ is $n$-factor serial if $\frac{M}{\mathit{rad}^{l-n}(M)}$ is uniserial and $\frac{M}{\mathit{rad}^{l-n+1}(M)}$ is not uniserial.
\end{itemize}
\end{definition}

\begin{definition}
We say that an artin algebra $\Lambda$ is right $n$-Nakayama if every indecomposable right $\Lambda$-module is $i$-factor serial for some $1 \leqslant i \leqslant n$ and there exists at least one  indecomposable $n$-factor serial right $\Lambda$-module. We can define left $n$-Nakayama algebras analogously.
\end{definition}

\begin{remark}
\begin{itemize}\item[(1)] $\Lambda$ is a right $1$-Nakayama algebra if and only if $\Lambda$ is a left $1$-Nakayama algebra if and only if $\Lambda$ is a Nakayama algebra.
\item[(2)] Let $\Lambda$ be a right $n$-Nakayama algebra. It is not necessary that for every $i$ with $1\leq i\leq n$, there exists an indecomposable $i$-factor serial right $\Lambda$-module. For example, let $\Lambda$ be the $K$-algebra given by the quiver
$$\hskip .5cm \xymatrix@-4mm{
&{1}\ar @{<-}[d]\\
{4}\ar [r]&{3}\ar [r]& {2}}\hskip .5cm$$
Its Auslander-Reiten quiver is given by
$$
\xymatrix@-5mm{
&& {\bsm1\\111\esm}\ar[ddr]&&{\bsm0\\010\esm}\ar[ddr]&&{\bsm0\\100\esm} \\
{\bsm0\\001\esm}\ar[dr]&&{\bsm1\\010\esm}\ar[dr]&&{\bsm0\\111\esm}\ar[dr]\\
&{\bsm1\\011\esm}\ar[dr]\ar[uur]\ar[ur]&&{\bsm1\\121\esm}\ar[dr]\ar[uur]\ar[ur]&&{\bsm0\\110\esm}\ar[uur]\\
{\bsm1\\000\esm}\ar[ur]&&{\bsm0\\011\esm}\ar[ur]&&{\bsm1\\110\esm}\ar[ur]\\
}
$$
Where indecomposable modules are represented by their dimension vectors.
It is easy to see that indecomposable right $\Lambda$-modules $\bsm0\\010\esm, \bsm0\\100\esm, \bsm0\\001\esm, \bsm1\\010\esm, \bsm0\\111\esm, \bsm0\\110\esm, \bsm1\\000\esm, \bsm0\\011\esm$ and $\bsm1\\110\esm$ are uniserial, indecomposable right $\Lambda$-modules $\bsm1\\111\esm$ and $\bsm1\\011\esm$ are $2$-factor serial and an indecomposable right $\Lambda$-module $\bsm1\\121\esm$ is $5$-factor serial. Then $\Lambda$ is right $5$-Nakayama, but there is no indecomposable $3$-factor serial right $\Lambda$-module.
\end{itemize}
 \end{remark}

The following example shows that there is a right $n$-Nakayama algebra which is not left $n$-Nakayama.

 \begin{example}
Let $\Lambda$ be the $K$-algebra given by the quiver
$$\hskip .5cm \xymatrix@-4mm{
 2\ar@{<-}[r]&3 \ar[r]&1
 }\hskip .5cm $$
Its Auslander-Reiten quiver is given by
$$
\xymatrix@-5mm{
{\bsm100\esm}\ar[dr]&& {\bsm011\esm} \ar[dr] \\
&{\bsm111\esm} \ar[dr]\ar[ur]&& {\bsm010\esm}\\
{\bsm001\esm}\ar[ur]&& {\bsm110\esm} \ar[ur] \\
}
$$
Indecomposable right $\Lambda$-modules $\bsm100\esm, \bsm011\esm, \bsm010\esm, \bsm001\esm$ and $\bsm110\esm$ are uniserial and an indecomposable right $\Lambda$-module $\bsm111\esm$ is $2$-factor serial. Then $\Lambda$ is right $2$-Nakayama. $\Lambda^{op}$ is the $K$-algebra given by the quiver
$$\hskip .5cm \xymatrix@-4mm{
 2\ar[r]&3 &1\ar[l]
 }\hskip .5cm $$
Its Auslander-Reiten quiver is given by
$$
\xymatrix@-5mm{
&{\bsm011\esm}\ar[dr]&&{\bsm100\esm} \\
{\bsm010\esm} \ar[dr]\ar[ur]&& {\bsm111\esm}\ar[ur]\ar[dr]&\\
&{\bsm110\esm}\ar[ur]&& {\bsm001\esm}  \\
}
$$
Indecomposable right $\Lambda^{op}$-modules $\bsm011\esm, \bsm100\esm, \bsm010\esm, \bsm110\esm$ and $\bsm001\esm$ are uniserial and an indecomposable right $\Lambda^{op}$-module $\bsm111\esm$ is $3$-factor serial. Then $\Lambda$ is a left $3$-Nakayama algebra.
\end{example}

\begin{remark}\label{1}
Let $n>1$ and $M$ be an $n$-factor serial right $\Lambda$-module of length $l$. For each $1\leq i \leq l-n$, $\frac{\frac{M}{\mathit{rad}^{l-n}(M)}}{\frac{\mathit{rad}^{i}(M)}{\mathit{rad}^{l-n}(M)}}\cong \frac{M}{\mathit{rad}^{i}(M)}$ and $\frac{M}{\mathit{rad}^{l-n}(M)}$ is uniserial. Then $\frac{M}{\mathit{rad}^{i}(M)}$ is uniserial for each $1\leq i \leq l-n$.
\end{remark}

\begin{theorem}\label{2}
Let $\Lambda$ be an artin algebra, $M$ be a non-uniserial right $\Lambda$-module of length $l$ and $2\leq n\leq l$, be a positive integer. Then the following conditions are equivalent:
\begin{itemize}
\item[$(a)$] $M$ is $n$-factor serial.
\item[$(b)$] $\mathit{rad^i(M)}$ is local for each $0\leq i\leq l-n-1$ and $\mathit{rad}^{l-n}(M)$ is not local.
\item[$(c)$] For any non-zero submodule $N$ of $M$, if $\mathit{l}(N)>n$, then $N$ is local and if $\mathit{l}(N)=n$, then $N$ is not local.
\end{itemize}
\end{theorem}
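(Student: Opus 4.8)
The plan is to reduce everything to a single standard fact about uniserial modules and then translate the radical filtration into statements about locality and lengths of submodules. Write $R^i := \mathrm{rad}^i(M)$ and recall the tools I will use repeatedly: for a finitely generated module $X$ over an artin algebra one has $\mathrm{rad}(X)=X\,\mathrm{rad}(\Lambda)$, whence $\mathrm{rad}(\mathrm{rad}^i X)=\mathrm{rad}^{i+1}X$; $\mathrm{top}(X)=X/\mathrm{rad}(X)$ and $X$ is local iff $\mathrm{top}(X)$ is simple; $\mathrm{rad}(X/K)=(\mathrm{rad}(X)+K)/K$ for any submodule $K$; and Nakayama's lemma. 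The engine of the whole argument is the observation I would isolate first: \emph{$X$ is uniserial if and only if $\mathrm{rad}^i(X)$ is local for every $i$ with $\mathrm{rad}^i(X)\neq 0$}. The nontrivial direction is proved by taking a nonzero $N\subseteq X$, letting $i$ be maximal with $N\subseteq\mathrm{rad}^i(X)$, and using that $\mathrm{rad}^i(X)/\mathrm{rad}^{i+1}(X)=\mathrm{top}(\mathrm{rad}^i X)$ is simple together with Nakayama to force $N=\mathrm{rad}^i(X)$; thus every submodule is a radical power and the submodules form a chain.

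For $(a)\Leftrightarrow(b)$ I would apply this observation to the two quotients $M/R^{l-n}$ and $M/R^{l-n+1}$. Using $\mathrm{rad}^j(M/R^k)=R^j/R^k$ for $j\le k$ and the identification $\mathrm{top}(R^j/R^k)=R^j/R^{j+1}=\mathrm{top}(R^j)$ for $j<k$, one gets that $M/R^{l-n}$ is uniserial exactly when $R^j$ is local for all $0\le j\le l-n-1$, and that $M/R^{l-n+1}$ is uniserial exactly when, in addition, $R^{l-n}$ is local (here $R^{l-n}/R^{l-n+1}=\mathrm{top}(R^{l-n})$ is local iff simple iff $R^{l-n}$ is local). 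Since $(a)$ asserts the first quotient is uniserial and the second is not, combining the two equivalences yields precisely $(b)$. This step is bookkeeping once the observation is available; the only care needed is the range of indices.

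For $(b)\Rightarrow(c)$, note that $(b)$ forces the layers $R^0/R^1,\dots,R^{l-n-1}/R^{l-n}$ to be simple, so $\mathit{l}(R^{l-n})=n$ and $M/R^{l-n}$ is uniserial of length $l-n$. Given a nonzero submodule $N$, set $j_0=\max\{j:N\subseteq R^j\}$. If $j_0\le l-n-1$, then $R^{j_0}$ is local and $N\not\subseteq R^{j_0+1}=\mathrm{rad}(R^{j_0})$ forces $N=R^{j_0}$ by Nakayama, so $N$ is local of length $l-j_0\ge n+1$. If instead $j_0\ge l-n$, then $N\subseteq R^{l-n}$ and hence $\mathit{l}(N)\le n$. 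Consequently a submodule with $\mathit{l}(N)>n$ must fall in the first case and is local, while one with $\mathit{l}(N)=n$ must fall in the second, forcing $N=R^{l-n}$, which is not local by $(b)$; this is exactly $(c)$.

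For $(c)\Rightarrow(b)$ I would run an induction showing $\mathit{l}(R^i)=l-i$ for $0\le i\le l-n$. The base case is $\mathit{l}(R^0)=l$; if $\mathit{l}(R^i)=l-i$ with $i\le l-n-1$, then $\mathit{l}(R^i)\ge n+1>n$, so $(c)$ makes $R^i$ local, whence $\mathrm{top}(R^i)=R^i/R^{i+1}$ is simple and $\mathit{l}(R^{i+1})=l-i-1$. This simultaneously records the locality of $R^0,\dots,R^{l-n-1}$ demanded by $(b)$, and it gives $\mathit{l}(R^{l-n})=n$, so the length-$n$ clause of $(c)$ makes $R^{l-n}$ not local, completing $(b)$. (The degenerate case $l=n$ is immediate: $(b)$ then reads ``$M$ is not local,'' which is the length-$n$ clause of $(c)$ applied to $N=M$.) I expect the main obstacle to be not any single estimate but keeping the index arithmetic and the top-of-quotient identifications in $(a)\Leftrightarrow(b)$ exactly right — in particular that locality of $R^j$ is detected by $R^j/R^{j+1}$ and survives passage to $M/R^{l-n}$ and $M/R^{l-n+1}$ only in the correct index range; once the uniserial-versus-radical-powers observation and the identity $\mathrm{rad}(\mathrm{rad}^i M)=\mathrm{rad}^{i+1}M$ are in place, the implications involving $(c)$ are short Nakayama and length arguments.
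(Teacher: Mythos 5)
Your proof is correct and follows essentially the same route as the paper's: $(a)\Leftrightarrow(b)$ via the radical filtration, $(b)\Rightarrow(c)$ by showing every submodule of length at least $n$ is a power $\mathrm{rad}^{i}(M)$, and $(c)\Rightarrow(b)$ by the same descending induction on the $\mathrm{rad}^{i}(M)$. The only difference is one of completeness: you isolate and prove the key fact (a module is uniserial iff all its nonzero radical powers are local, via the Nakayama-type argument) that the paper leaves implicit in its ``obvious'' step $(b)\Rightarrow(a)$ and in its unproved assertion that any submodule of length $\geq n$ equals some $\mathrm{rad}^{i}(M)$.
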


\begin{proof} $(a)\Longrightarrow (b).$ Assume that there exists a positive integer $i$, $0\leq i \leq l-n-1$, such that $\mathit{rad^i(M)}$ is not local, so $\frac{M}{\mathit{rad^{i+1}(M)}}$ is not uniserial, which is a contradiction to the Remark \ref{1}.

$(b)\Longrightarrow (a).$ It is obvious.

$(b)\Longrightarrow (c).$ Let $N$ be a non-zero submodule of $M$ that $\mathit{l}(N)\geq n$. Then by $(b)$, there exists $0\leq i \leq l-n$, such that $\mathit{rad}^{i}(M)=N$. If $\mathit{l}(N)> n$,  then $N$ is local and if $\mathit{l}(N)=n$, then $N$ is not local.

$(c)\Longrightarrow (b).$ If $l=n$, then by assumption $M$ is not local and $(b)$ follows. Now assume that $l>n$. Then by assumption $M$ is local and $\mathit{l}(\mathit{rad}(M))=l-1$. If $n=l-1$, then $\mathit{rad}(M)$ is not local and $(b)$ follows. If $l-1>n$, then $\mathit{rad}(M)$ is local and $\mathit{l}(\mathit{rad}^2(M))=l-2$. Continuing in this way, we can see that  $\mathit{rad}^i(M)$ is local for each $0\leq i \leq l-n-1$ and
$\mathit{rad}^{l-n}(M)$ is not local.
\end{proof}

\begin{remark}\label{3}
Let $\Lambda$ be an artin algebra, $n>1$ be a positive integer and $M$ be an $n$-factor serial right $\Lambda$-module of length $l$.
If $\mathit{l}(\mathit{soc}(M))>n$, then by Theorem \ref{2} there exists a positive integer $i$, $1\leq i\leq l-n-1$, such that $\mathit{soc}(M)= \mathit{rad}^{i}(M)$. Thus $\mathit{rad}^{i+1}(M)=0$ and $\frac{M}{\mathit{rad}^{i+1}(M)}=M$ is not uniserial which is a contradiction to the Remark \ref{1}. Therefore $\mathit{l}\mathit{(soc}(M)) \leq n$ and since $\mathit{rad}^{l-n-1}(M)$ is local, $\mathit{soc}(M)\subseteq \mathit{rad}^{l-n}(M)$.
\end{remark}

\begin{corollary}\label{4}
Let $\Lambda$ be an artin algebra and $M$ be a right $\Lambda$-module of length $l>1$. Then $M$ is $l$-factor serial if and only if $M$ is not local.
\end{corollary}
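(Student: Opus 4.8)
The plan is to specialize part $(2)$ of the definition of $n$-factor serial to the extreme value $n=l$ and thereby reduce the whole statement to an elementary fact about the top of $M$. Since $l>1$, the value $n=l$ is admissible in that part of the definition, and the two relevant radical powers degenerate: $\mathit{rad}^{l-n}(M)=\mathit{rad}^{0}(M)=M$ and $\mathit{rad}^{l-n+1}(M)=\mathit{rad}(M)$. Hence $M$ is $l$-factor serial precisely when $\frac{M}{\mathit{rad}^{0}(M)}=M/M=0$ is uniserial and $\frac{M}{\mathit{rad}(M)}=top(M)$ is not uniserial.

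The first of these two conditions is automatic, the zero module being (vacuously) uniserial, so the criterion collapses to the single requirement that $top(M)$ be not uniserial. I would then invoke the standard description of the top: because $l>1$, the module $top(M)=M/\mathit{rad}(M)$ is a nonzero semisimple module, and a nonzero semisimple module is uniserial if and only if it is simple. Moreover $top(M)$ is simple exactly when $M$ has a unique maximal submodule, i.e.\ exactly when $M$ is local. Chaining these equivalences yields ``$M$ is $l$-factor serial $\iff top(M)$ is not simple $\iff M$ is not local'', which is the assertion.

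The only genuinely delicate point is the degenerate boundary: at $n=l$ the exponent $l-n$ becomes $0$, so one of the quotients appearing in the definition is the zero module, and one must be comfortable calling it uniserial. If one prefers to avoid this convention, the backward implication can instead be routed through Theorem~\ref{2}: if $M$ is not local then $top(M)$ is not simple, hence $M$ is not uniserial, so Theorem~\ref{2} applies with $n=l$, where condition $(b)$ reads ``$\mathit{rad}^{i}(M)$ local for $0\le i\le -1$ (vacuously true) and $\mathit{rad}^{0}(M)=M$ not local'', forcing $M$ to be $l$-factor serial. The forward implication is immediate from the definition, since an $l$-factor serial module with $l>1$ has, by definition, a non-uniserial (hence non-simple) top and is therefore not local. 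I expect no further obstacles once this $n=l$ boundary case is handled.
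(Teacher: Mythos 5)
Your proposal is correct and takes essentially the same approach as the paper: the paper's entire proof is ``It follows from Theorem \ref{2}'', i.e.\ the specialization $n=l$ of that theorem, which is exactly your fallback route, while your primary argument simply unfolds the same specialization directly from the definition (the degenerate quotient $M/\mathit{rad}^{0}(M)=0$ being vacuously uniserial). If anything you are slightly more careful than the paper, since Theorem \ref{2} is stated only for non-uniserial $M$ and you explicitly note that a non-local module cannot be uniserial before invoking it, a point the paper leaves implicit.
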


\begin{proof} It follows from Theorem \ref{2}.
\end{proof}

\begin{corollary}\label{15}
Let $\Lambda$ be an artin algebra and $M$ be a right $\Lambda$-module of length $l$. If $M$ is not indecomposable, then $M$ is $l$-factor serial.
\end{corollary}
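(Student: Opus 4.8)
The plan is to deduce this directly from Corollary \ref{4}, which already identifies the $l$-factor serial modules of length $l>1$ as precisely the non-local ones. Thus the entire task reduces to checking two points: that the hypothesis $l>1$ is automatic under the assumption that $M$ is not indecomposable, and that a decomposable module can never be local.

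First I would record the length bound. If $M$ is not indecomposable, then $M\cong M_1\oplus M_2$ with $M_1$ and $M_2$ both nonzero, and since length is additive on direct sums we get $l=\mathit{l}(M_1)+\mathit{l}(M_2)\geq 2$. In particular $l>1$, so Corollary \ref{4} is applicable to $M$.

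Next comes the key step, namely that $M$ is not local. Applying the top functor to the decomposition gives $\frac{M}{\mathit{rad}(M)}\cong \frac{M_1}{\mathit{rad}(M_1)}\oplus \frac{M_2}{\mathit{rad}(M_2)}$, and since a nonzero finitely generated module over an artin algebra has nonzero top (Nakayama's lemma), each summand on the right is nonzero. Hence $\frac{M}{\mathit{rad}(M)}$ has at least two simple composition factors and is in particular not simple, so $M$ is not local. Combining this with Corollary \ref{4} (valid because $l>1$) yields that $M$ is $l$-factor serial, as required.

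There is no genuine obstacle here: the only facts used beyond Corollary \ref{4} are the additivity of length on direct sums, the compatibility of the top functor with direct sums, and the non-vanishing of the top of a nonzero module. The argument is therefore entirely formal once Corollary \ref{4} is in hand.
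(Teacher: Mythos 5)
Your proof is correct and takes essentially the same route as the paper, whose entire proof of this corollary is ``It follows from Corollary \ref{4}.'' What you have done is supply the two details left implicit there --- that a decomposable module has length $l\geq 2$, and that it cannot be local since its top splits into two nonzero summands --- so your write-up is simply the filled-in version of the paper's one-line argument.
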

\begin{proof}
It follows from Corollary \ref{4}.
\end{proof}

\begin{lemma}\label{23}
Let $\Lambda$ be an artin algebra and $M$ be an indecomposable right $\Lambda$-module such that $\mathit{rad}^{2}(M)=0$. Then the following statements hold:
\begin{itemize}
\item[$(a)$] $M$ is uniserial if and only if $\mathit{l}(M)=1$ or $2$.
\item[$(b)$] Assume that $l=\mathit{l}(M)> 2$. If $M$ is not local, then $M$ is $l$-factor serial and if $M$ is local, then $M$ is $(l-1)$-factor serial.
\end{itemize}
\end{lemma}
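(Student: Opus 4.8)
The plan is to dispatch part $(a)$ directly by a length count and to reduce part $(b)$ entirely to the earlier characterizations in Corollary \ref{4} and Theorem \ref{2}, exploiting the fact that the hypothesis $\mathit{rad}^2(M)=0$ forces $\mathit{rad}(M)$ to be semisimple.

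For part $(a)$ I would argue both implications. If $\mathit{l}(M)=1$ then $M$ is simple, hence uniserial; and if $\mathit{l}(M)=2$ then indecomposability rules out $M$ being a sum of two simples, so $M$ has a unique maximal submodule, namely $\mathit{rad}(M)$, and the chain $0\subset \mathit{rad}(M)\subset M$ is the unique composition series, so $M$ is uniserial. For the converse, if $M$ is uniserial then each radical layer $\mathit{rad}^i(M)/\mathit{rad}^{i+1}(M)$ is simple, so the length of $M$ equals its Loewy length; since $\mathit{rad}^2(M)=0$ caps the Loewy length at $2$, we conclude $\mathit{l}(M)\in\{1,2\}$.

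For part $(b)$, assume $l=\mathit{l}(M)>2$. I would first note that $M$ cannot be uniserial, since otherwise part $(a)$ would force $l\leq 2$; hence Theorem \ref{2} is applicable. If $M$ is not local, then Corollary \ref{4} immediately gives that $M$ is $l$-factor serial. If $M$ is local, I would verify condition $(b)$ of Theorem \ref{2} with $n=l-1$. In this case the index range $0\leq i\leq l-n-1$ collapses to the single value $i=0$, so I only need that $M=\mathit{rad}^{0}(M)$ is local, which is the hypothesis, and that $\mathit{rad}^{l-n}(M)=\mathit{rad}(M)$ is \emph{not} local. Here one uses that $M$ local gives a simple top, so $\mathit{l}(\mathit{rad}(M))=l-1>1$, while $\mathit{rad}^{2}(M)=0$ makes $\mathit{rad}(M)$ semisimple; a semisimple module of length greater than one is not local, so $\mathit{rad}(M)$ is not local and Theorem \ref{2} yields that $M$ is $(l-1)$-factor serial.

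I do not expect a serious obstacle: the whole argument rests on the single structural observation that $\mathit{rad}^{2}(M)=0$ turns $\mathit{rad}(M)$ into a semisimple module, whence for $\mathit{rad}(M)$ the conditions ``not local'' and ``not simple'' coincide. The only point demanding a little care is confirming that the index range in Theorem \ref{2}$(b)$ degenerates correctly when $n=l-1$, so that verifying the single pair of conditions on $M$ and $\mathit{rad}(M)$ genuinely suffices.
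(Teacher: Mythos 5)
Your proof is correct and takes essentially the same route as the paper: both dispatch the non-local case of $(b)$ immediately by Corollary \ref{4}, and your local case --- verifying condition $(b)$ of Theorem \ref{2} with $n=l-1$ via the observation that $\mathit{rad}(M)$ is semisimple of length $l-1>1$, hence not local --- is just the equivalent reformulation of the paper's direct check that $\frac{M}{\mathit{rad}(M)}$ is uniserial while $M=\frac{M}{\mathit{rad}^{2}(M)}$ is not. You additionally write out part $(a)$, which the paper dismisses as clear, and your use of $(a)$ to justify that $M$ is non-uniserial (so that Theorem \ref{2} applies) is exactly the point the paper leaves implicit.
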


\begin{proof} $(a)$. It is clear.

$(b)$. Assume that $l> 2$. If $M$ is not local, then by Corollary \ref{4}, $M$ is $l$-factor serial. Now assume that $M$ is local. Since $\mathit{rad}^{2}(M)=0$,  $\frac{M}{\mathit{rad}^{l-(l-1)+1}(M)}=\frac{M}{\mathit{rad}^{2}(M)}=M$ is not uniserial and $\frac{M}{\mathit{rad}(M)}$ is uniserial. Thus $M$ is $(l-1)$-factor serial.
\end{proof}

\begin{lemma}\label{5}
Let $\Lambda$ be an artin algebra, $M$ and $N$ be two right $\Lambda$-modules that $M$ is $t$-factor serial and $N$ is $s$-factor serial for some positive integer $s>1$. If there exists a proper epimorphism $f: M\longrightarrow N$, then $t> s$.
\end{lemma}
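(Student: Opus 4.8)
The plan is to compare, for $M$ and for $N$, the smallest radical power at which the corresponding factor module ceases to be uniserial, and then to transport this comparison across the epimorphism $f$. First I would record the two standard facts that do all the work. Since $\mathit{rad}^j(X)=XJ^j$ for $J=\mathit{rad}(\Lambda)$ and any finitely generated module $X$, a surjection $f\colon M\to N$ satisfies $f(\mathit{rad}^j(M))=f(M)J^j=NJ^j=\mathit{rad}^j(N)$ for every $j\geq 0$; hence $f$ induces an epimorphism $\bar f_j\colon M/\mathit{rad}^j(M)\twoheadrightarrow N/\mathit{rad}^j(N)$ for each $j$. Secondly, I would use that any quotient of a uniserial module is uniserial, since the submodules of a quotient again form a chain. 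Combining these gives the crucial implication: whenever $N/\mathit{rad}^j(N)$ is not uniserial, neither is $M/\mathit{rad}^j(M)$.

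Next I would translate ``$n$-factor serial'' into the language of these critical levels. Because $f$ is a proper epimorphism, $\ker f\neq 0$, so $l:=\mathit{l}(M)>\mathit{l}(N)=:m$. Since $N$ is $s$-factor serial with $s>1$ it is non-uniserial, and therefore $M$ cannot be uniserial (a uniserial module has only uniserial quotients); thus $t\geq 2$ and both factor-serial descriptions are available. For the $t$-factor serial module $M$, the definition together with Remark \ref{1} shows that $M/\mathit{rad}^i(M)$ is uniserial for all $1\leq i\leq l-t$ while $M/\mathit{rad}^{l-t+1}(M)$ is not; moreover, for $i>l-t+1$ the module $M/\mathit{rad}^i(M)$ has $M/\mathit{rad}^{l-t+1}(M)$ as a quotient and so is again non-uniserial. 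Hence the least index $j$ with $M/\mathit{rad}^j(M)$ non-uniserial is exactly $j_M=l-t+1$, and likewise the least such index for $N$ is $j_N=m-s+1$.

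Finally I would apply the implication of the first paragraph at $j=j_N$: the module $N/\mathit{rad}^{j_N}(N)$ is non-uniserial, so $M/\mathit{rad}^{j_N}(M)$ is non-uniserial, whence $j_M\leq j_N$ by minimality of $j_M$. This reads $l-t+1\leq m-s+1$, that is $t-s\geq l-m\geq 1$, and therefore $t>s$ (in fact $t\geq s+(l-m)$).

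I expect the only point requiring care to be the radical-compatibility $f(\mathit{rad}^j(M))=\mathit{rad}^j(N)$ and the resulting induced surjections $\bar f_j$ on factor modules; once that is in place the remainder is bookkeeping of the critical levels. The genuine idea is simply that an epimorphism can only make the lower factor modules ``more uniserial,'' so the first non-uniserial factor of $N$ appears no earlier than that of $M$, and the strict drop in length then upgrades the inequality to $t>s$.
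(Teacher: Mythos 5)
Your proof is correct, and it reaches the conclusion by a genuinely different mechanism than the paper's. The paper argues by contradiction: assuming $s\geq t$, it invokes Theorem \ref{2} to locate the kernel, showing $\ker f\subsetneqq \mathit{rad}^{l-t}(M)$ and hence $\ker f\subseteq \mathit{rad}^{l-(s+i)+1}(M)$, where $i=\mathit{l}(\ker f)$; this containment makes $N/\mathit{rad}^{m-s+1}(N)$ an honest isomorphic copy of $M/\mathit{rad}^{l-(s+i)+1}(M)$, which is uniserial by Remark \ref{1}, contradicting the definition of $s$-factor serial. You never locate the kernel at all: your key step is that any epimorphism is radical-compatible, $f(\mathit{rad}^j(M))=\mathit{rad}^j(N)$, so it induces surjections $M/\mathit{rad}^j(M)\twoheadrightarrow N/\mathit{rad}^j(N)$ for every $j$, and since quotients of uniserial modules are uniserial, the first non-uniserial radical factor of $N$ cannot occur at an earlier level than that of $M$; comparing the critical levels $l-t+1$ and $m-s+1$ then finishes the argument. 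Your route is more elementary (it bypasses Theorem \ref{2} entirely, using only Remark \ref{1} and the definition), it is direct rather than by contradiction, and it yields the sharper quantitative conclusion $t\geq s+\mathit{l}(\ker f)$, of which the lemma's $t>s$ is the case $\mathit{l}(\ker f)\geq 1$. The one ingredient you must justify, as you note, is $\mathit{rad}^j(X)=XJ^j$ for $J=\mathit{rad}(\Lambda)$ and its compatibility with surjections, which is standard for finitely generated modules over an artin algebra. What the paper's argument supplies that yours does not is the precise position of $\ker f$ in the radical filtration of $M$, but that extra information is not needed for the statement being proved.
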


\begin{proof}
Let $M_1=\frac{M}{\mathit{ker}(f)}$ and $\mathit{l}(M)=l$. Then $\mathit{l}(M_1)=l-i$ for some positive integer $i>0$. Suppose that $s\geqslant t$. By definition, $\frac{M_1}{\mathit{rad}^{l-i-s} (M_1)}$ is uniserial and $\frac{M_1}{\mathit{rad}^{l-(i+s)+1} (M_1)}$ is not uniserial. Since $M$ is $t$-factor serial and $M_1$ is not uniserial, by Theorem \ref{2}, $\mathit{ker}(f)\subsetneqq \mathit{rad}^{l-t}(M)$. On the other hand, since $t< s+i$,
$\mathit{rad}^{l-t}(M)\subsetneqq \mathit{rad}^{l-(s+i)}(M)$ and $\mathit{rad}^{l-t}(M)\subseteq \mathit{rad}^{l-(s+i)+1}(M)$. Therefore $\frac{M_1}{\mathit{rad}^{l-(s+i)+1}(M_1)}\cong \frac{\frac{M}{\mathit{ker}(f)}}{\mathit{rad}^{l-(s+i)+1}(\frac{M}{\mathit{ker}(f)})}\cong \frac{M}{\mathit{rad}^{l-(s+i)+1}(M)} $ is uniserial which gives a contradiction. Then $t>s$ and the result follows.
\end{proof}

Note that in Lemma \ref{5} if $s=1$, then obviously we have $t\geq s$.

\begin{remark}
Let $\Lambda$ be a representation-finite artin algebra with radical square zero, $l_1=max\{\mathit{l}(M)|  \text{$M$ is an indecomposable non-local right $\Lambda$-module}\}$, $l_2=\\max\{\mathit{l}(P)|\text{$P$ is an indecomposable projective right $\Lambda$-module}\}$ and $l=max\{l_1, l_2-1\}$. Then by Lemmas \ref{23} and \ref{5}, $\Lambda$ is right $l$-Nakayama.
\end{remark}

\begin{theorem}\label{6}
Let $\Lambda$ be an artin algebra and $M$ be a non-uniserial right $\Lambda$-module of length $l$. Then the following conditions are equivalent:
\begin{itemize}
\item[$(a)$] $M$ is $n$-factor serial.
\item[$(b)$] $\max \lbrace n_i \vert 1\leq i\leq d, \frac{M}{S_i}\,  is\,$  ${n_i}$-factor $ serial\rbrace =n-1$, where $\mathit{soc}(M)=S_1^{i_1}\oplus S_2^{i_2}\oplus\cdots\oplus S_d^{i_d}$.
\end{itemize}
\end{theorem}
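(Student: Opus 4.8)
The plan is to prove the implication $(a)\Rightarrow(b)$ directly and then deduce $(b)\Rightarrow(a)$ formally. Throughout I use the convention forced by Definition 1.1, that every quotient $\frac{M}{S_i}$ is $n_i$-factor serial for a unique integer $n_i\geq 1$, with $n_i=1$ exactly when $\frac{M}{S_i}$ is uniserial. For $(a)\Rightarrow(b)$ I would bound $\max_i n_i$ from both sides.

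First, the upper bound $\max_i n_i\leq n-1$. For each $i$ there is a proper epimorphism $f_i\colon M\to\frac{M}{S_i}$. If $\frac{M}{S_i}$ is non-uniserial, so $n_i>1$, then Lemma \ref{5} applied to $f_i$ (with $t=n$, $s=n_i$) gives $n>n_i$, hence $n_i\leq n-1$; if $\frac{M}{S_i}$ is uniserial then $n_i=1\leq n-1$ since $n\geq 2$. Thus $\max_i n_i\leq n-1$.

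Second, and this is the crux, I must produce an index attaining $n-1$. Set $R=\mathit{rad}^{l-n}(M)$. Since $M$ is $n$-factor serial, Theorem \ref{2} says each of $\mathit{rad}^0(M),\dots,\mathit{rad}^{l-n-1}(M)$ is local, so each layer $\mathit{rad}^{j}(M)/\mathit{rad}^{j+1}(M)$ with $0\leq j\leq l-n-1$ is simple; counting these $l-n$ simple layers gives $\mathit{l}(R)=n$, Theorem \ref{2} also gives that $R$ is not local, and Remark \ref{3} gives $\mathit{soc}(M)\subseteq R$, whence $\mathit{soc}(R)=\mathit{soc}(M)$. Fix any simple submodule $S\subseteq\mathit{soc}(M)$ (so $S\cong S_i$ for some $i$). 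Because $S\subseteq R\subseteq\mathit{rad}^{j}(M)$ for every $j\leq l-n$, one gets $\mathit{rad}^{j}(\frac{M}{S})=\frac{\mathit{rad}^{j}(M)}{S}$ for $0\leq j\leq l-n$; in particular the modules $\mathit{rad}^{j}(\frac{M}{S})$ with $j\leq l-n-1$ are local, being images of local modules. By Theorem \ref{2} applied to $\frac{M}{S}$ (of length $l-1$), it then suffices to choose $S$ so that $\mathit{rad}^{l-n}(\frac{M}{S})=\frac{R}{S}$ is not local, for then $\frac{M}{S}$ is $(n-1)$-factor serial.

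The selection reduces to a statement about the non-local module $R$: it has a simple submodule $S$ with $\frac{R}{S}$ non-local. Indeed $\mathit{top}(\frac{R}{S})=\frac{R}{\mathit{rad}(R)+S}$, so non-locality of $\frac{R}{S}$ means $\mathit{l}(\frac{R}{\mathit{rad}(R)+S})\geq 2$, and since $S$ is simple, passing from $R$ to $\frac{R}{S}$ lowers $\mathit{l}(\mathit{top})$ by at most one. If $\mathit{l}(\mathit{top}(R))\geq 3$, any simple $S$ works. Otherwise $\mathit{l}(\mathit{top}(R))=2$, and I take $S\subseteq\mathit{rad}(R)$, for which $\mathit{rad}(R)+S=\mathit{rad}(R)$ and $\mathit{top}(\frac{R}{S})=\mathit{top}(R)$ stays of length $2$; such an $S$ exists unless $\mathit{rad}(R)=0$, i.e.\ $R$ is semisimple, in which case $\mathit{l}(\mathit{top}(R))=2$ forces $\mathit{l}(R)=2$ and hence $n=\mathit{l}(R)=2$. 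So for $n\geq 3$ this obstruction cannot occur and the required $S$ is found. For $n=2$ the module $R$ is non-local of length $2$, hence semisimple, so $\frac{R}{S}$ is simple and a direct check shows $\frac{M}{S}$ is uniserial, i.e.\ $(n-1)$-factor serial. In every case $\max_i n_i=n-1$, completing $(a)\Rightarrow(b)$. Finally $(b)\Rightarrow(a)$ is formal: $M$ is non-uniserial, so $m$-factor serial for a unique $m\geq 2$, and the proved implication forces $\max_i n_i=m-1$; comparing with $\max_i n_i=n-1$ yields $m=n$. The main obstacle is the existence step, namely showing that a non-local module of length $\geq 3$ always has a simple submodule whose quotient stays non-local; the observation that dissolves it is that the only failure, $R$ semisimple of length $2$, is incompatible with $\mathit{l}(R)=n\geq 3$.
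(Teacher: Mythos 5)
Your proposal is correct; each step checks out. It also follows the paper's overall skeleton --- the upper bound $n_i\leq n-1$ from Lemma \ref{5} applied to the proper epimorphism $M\to \frac{M}{S_i}$, then production of a simple submodule attaining $n-1$, then a formal deduction of $(b)\Rightarrow(a)$ --- but your execution of the attainment step takes a genuinely different route. The paper argues directly from the definition: it splits on whether $\mathit{rad}^{l-n+1}(M)$ vanishes, takes $S\subseteq\mathit{rad}^{l-n+1}(M)$ when it does not (and, when it does, notes that $\mathit{soc}(M)=\mathit{rad}^{l-n}(M)$ has length $n$, so that for $n>2$ any simple $S$ works), and then verifies $(n-1)$-factor seriality of $\frac{M}{S}$ by computing the two defining quotients $\frac{M/S}{\mathit{rad}^{l-n}(M/S)}\cong\frac{M}{\mathit{rad}^{l-n}(M)}$ and $\frac{M/S}{\mathit{rad}^{l-n+1}(M/S)}\cong\frac{M}{\mathit{rad}^{l-n+1}(M)}$. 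You instead route everything through the locality criterion of Theorem \ref{2}: after the bookkeeping $\mathit{l}(R)=n$, $\mathit{soc}(R)=\mathit{soc}(M)$ and $\mathit{rad}^{j}(\frac{M}{S})=\frac{\mathit{rad}^{j}(M)}{S}$ for $j\leq l-n$ (all of which you justify, where the paper uses such facts silently), the problem becomes the self-contained statement that a non-local module $R$ of length $n\geq 3$ admits a simple submodule $S$ with $\frac{R}{S}$ non-local, which you settle by the case analysis on $\mathit{l}(\mathit{top}(R))$: any $S$ if it is at least $3$; $S\subseteq\mathit{rad}(R)$ if it equals $2$; and the only obstruction, $R$ semisimple of length $2$, forces $n=2$, which you handle separately. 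What your route buys is a clean conceptual reduction to a statement purely about non-local modules, plus explicit verification of the radical bookkeeping; what the paper's route buys is brevity, since once $S\subseteq\mathit{rad}^{l-n+1}(M)$ is chosen the two defining quotients of $\frac{M}{S}$ are computed in one line. Your $(b)\Rightarrow(a)$ is also marginally cleaner, resting on uniqueness of the factor-serial index rather than re-invoking Lemma \ref{5} before deriving the contradiction. One shared terse point: in the $n=2$ case both you and the paper assert that $\frac{M}{S}$ is uniserial by a ``direct check''; this rests on the standard fact that a module all of whose radical layers are simple is uniserial (immediate from Nakayama's lemma applied to $N+\mathit{rad}^{j+1}=\mathit{rad}^{j}$), so it is harmless, but one sentence saying so would make your argument fully self-contained.
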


\begin{proof} $(a)\Longrightarrow (b).$ Suppose that $S$ is a simple submodule of $M$. By Lemma \ref{5}, $\frac{M}{S}$ is $t$-factor serial for some $t\leq n-1$. Now we show that there exists a simple submodule $S$ of $M$ such that $\frac{M}{S}$ is $(n-1)$-factor serial. By Remark \ref{3}, $\mathit{soc}(M)\subseteq \mathit{rad}^{l-n}(M)$. If $\mathit{rad}^{l-n+1}(M)\neq 0$, then there exists a simple submodule $S$ of $M$ that $S$ is a submodule of $\mathit{rad}^{l-n+1}(M)$. It is clear that $\frac{\frac{M}{S}}{\mathit{rad}^{(l-1)-(n-1)}(\frac{M}{S})}\cong \frac{M}{\mathit{rad}^{l-n}(M)} $ is uniserial and $\frac{\frac{M}{S}}{\mathit{rad}^{(l-1)-(n-1)+1}(\frac{M}{S})}\cong \frac{M}{\mathit{rad}^{l-n+1}(M)}$ is not uniserial. Thus $\frac{M}{S}$ is $(n-1)$-factor serial and the result follows. Now assume that $\mathit{rad}^{l-n+1}(M)=0$. Then $\mathit{soc}(M)= \mathit{rad}^{l-n}(M)$ and $\mathit{l}(\mathit{soc}(M))=\mathit{l}(\mathit{rad}^{l-n}(M))=n$. If $n=2$, then $\frac{M}{S}$ is uniserial, $M$ is $2$-factor serial and the result follows. If $n>2$, then there exists a simple submodule $S$ of $M$ such that $\frac{M}{S}$ is not uniserial and $\mathit{rad}^{(l-1)-(n-1)+1}(\frac{M}{S})=0$. Then $\frac{\frac{M}{S}}{\mathit{rad}^{(l-1)-(n-1)+1}(\frac{M}{S})}\cong\frac{M}{S}$ is not uniserial and $\frac{\frac{M}{S}}{\mathit{rad}^{(l-1)-(n-1)}(\frac{M}{S})}\cong \frac{M}{\mathit{rad}^{l-n}(M)} $ is uniserial. Therefore $\frac{M}{S}$ is $(n-1)$-factor serial and the result follows.

$(b)\Longrightarrow (a).$ Let $S$ be a simple submodule of $M$ that $\frac{M}{S}$ is $(n-1)$-factor serial. By Lemma \ref{5}, $M$ is $t$-factor serial for some $t\geqslant n$. Assume that $t> n$. Then by the proof of the first part, there exists a simple submodule $S$ of $M$ such that $\frac{M}{S}$ is $(t-1)$-factor serial which is a contradiction. Then $M$ is $n$-factor serial and the result follows.
\end{proof}

\begin{remark}\label{R6}
Let $M$ be a right $\Lambda$-module and $n>1$ be a positive integer. By Theorem \ref{6} and Lemma \ref{5}, $M$ is $n$-factor serial if and only if $M$ satisfies the following conditions:
\begin{itemize}
\item[$\left( \mathrm{i}\right) $]  $M$ is not $(n-1)$-factor serial.
\item[$\left( \mathrm{ii}\right) $] For any nonzero submodule $N$ of $M$, there exists $1\leq t \leq n-1$ such that $\frac{M}{N}$ is $t$-factor serial.
\item[$\left( \mathrm{iii}\right) $] There exists a submodule $L$ of $M$ such that $\frac{M}{L}$ is $(n-1)$-factor serial.
\end{itemize}
\end{remark}

\begin{corollary}\label{7}
Let $\Lambda$ be an artin algebra and $M$ be a non-uniserial right $\Lambda$-module of length $l$. Then the following conditions are equivalent:
\begin{itemize}
\item[$(a)$] $M$ is an $n$-factor serial right $\Lambda$-module.
\item[$(b)$] There exists a sequence of proper epimorphisms and right $\Lambda$-modules
 \begin{center}
$ M_0\buildrel {f_0} \over \longrightarrow M_1 \buildrel {f_{1}} \over \longrightarrow \cdots \buildrel {f_{n-3}} \over \longrightarrow M_{n-2}\buildrel {f_{n-2}} \over \longrightarrow M_{n-1}$
 \end{center}
 that $M_0=M$, $M_i$ is $(n-i)$-factor serial for each $1\leq i\leq n-1$, $f_{n-2}...f_0\neq 0$ and this sequence is the longest sequence with these properties.
\item[$(c)$] There exists a sequence of proper epimorphisms and right $\Lambda$-modules
 \begin{center}
$ M_0\buildrel {f_0} \over \longrightarrow M_1 \buildrel {f_{1}} \over \longrightarrow \cdots \buildrel {f_{n-3}} \over \longrightarrow M_{n-2}$
 \end{center}
 that $M_0=M$, $M_{n-2}$ is $2$-factor serial, $f_{n-3}...f_0\neq 0$ and this sequence is the longest sequence with these properties.
 \end{itemize}
\end{corollary}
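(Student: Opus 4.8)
The plan is to establish $(a)\Rightarrow(b)$ by iterating Theorem \ref{6}, to establish $(b)\Rightarrow(a)$ by combining Lemma \ref{5} with the maximality (``longest'') clause, and to dispatch $(b)\Leftrightarrow(c)$ by attaching or deleting a single terminal uniserial module. The guiding principle is that a non-uniserial module is $m$-factor serial for a unique integer $m\geq 2$, and that along a chain of proper epimorphisms the factor serial index must drop strictly (Lemma \ref{5}) while Theorem \ref{6} guarantees it can always be made to drop by exactly one.

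For $(a)\Rightarrow(b)$, I would start from $M_0=M$, assumed $n$-factor serial, and apply Theorem \ref{6} to its socle decomposition: there is a simple submodule $S_0$ with $M_1:=M_0/S_0$ being $(n-1)$-factor serial, and the projection $f_0\colon M_0\to M_1$ is a proper epimorphism. As long as the current module is non-uniserial (index $>1$) I repeat this, obtaining proper epimorphisms $f_i$ with $M_i$ being $(n-i)$-factor serial for $0\leq i\leq n-1$, the process stopping exactly when $M_{n-1}$ becomes $1$-factor serial, i.e.\ uniserial. Each $f_i$ is surjective, so the composite $f_{n-2}\cdots f_0$ is a surjection onto the nonzero module $M_{n-1}$ and hence nonzero. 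This chain is longest with the stated properties: a sequence with $M_k$ being $(n-k)$-factor serial forces $n-k\geq 1$, so $k\leq n-1$.

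For $(b)\Rightarrow(a)$, suppose such a longest sequence exists; as $M$ is non-uniserial it is $m$-factor serial for some $m\geq 2$, and I must show $m=n$. The proper epimorphism $M_0\to M_1$ onto the $(n-1)$-factor serial module $M_1$, via Lemma \ref{5} (with $s=n-1$ when $n\geq 3$), already gives $m\geq n$. To exclude $m>n$ I invoke the construction of the previous paragraph applied to $M$: being $m$-factor serial it admits a chain of proper epimorphisms with factor serial indices descending by one to uniserial, of length $m-1$; were $m>n$, this would be strictly longer than the given chain of length $n-1$, contradicting its maximality. Hence $m=n$. The boundary case $n=2$ is handled purely by maximality, since there Lemma \ref{5} only yields the vacuous $m\geq 2$, but any $m>2$ again contradicts ``longest.'' Finally, $(b)\Leftrightarrow(c)$ is immediate: the two sequences differ only in the terminal term, and passing between them amounts to deleting the uniserial module $M_{n-1}$ (keeping $M_{n-2}$, which is $2$-factor serial) or, conversely, re-adjoining it by choosing via Theorem \ref{6} a simple submodule $S$ of the $2$-factor serial module $M_{n-2}$ with $M_{n-2}/S$ uniserial.

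The main obstacle is the correct bookkeeping of the ``longest sequence'' clause in $(b)\Rightarrow(a)$: Lemma \ref{5} by itself only secures the inequality $m\geq n$, and it is essential to translate maximality of the chain into a length count---using the explicit descending construction from $(a)\Rightarrow(b)$---in order to rule out $m>n$ and pin the index down to exactly $n$. One must also be attentive to the degenerate endpoints ($n=2$, where $(c)$ collapses to the single module $M$) so that the references to Lemma \ref{5} and its $s=1$ addendum are applied only where they are valid.
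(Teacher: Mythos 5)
Your proof is correct and rests on exactly the same ingredients as the paper's: the descent construction from Theorem \ref{6} (quotient by a suitable simple submodule to drop the factor-serial index by one) for the forward direction, and Lemma \ref{5} together with the maximality clause (a longer chain built via Theorem \ref{6} would contradict ``longest'') for the reverse. The only difference is organizational rather than mathematical --- the paper runs the cycle $(a)\Rightarrow(b)\Rightarrow(c)\Rightarrow(a)$ and proves the hard step $(c)\Rightarrow(a)$ by backward induction from the $2$-factor serial end of the chain, whereas you prove $(b)\Rightarrow(a)$ in one shot at the top of the chain (Lemma \ref{5} gives $m\geq n$, maximality excludes $m>n$) and get $(b)\Leftrightarrow(c)$ by truncation/extension; your explicit handling of the $n=2$ boundary case, where Lemma \ref{5} requires $s>1$, is if anything more careful than the paper's own write-up.
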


\begin{proof} $(a)\Longrightarrow (b).$  By using Theorem \ref{6} we can construct such sequence.

$(b)\Rightarrow (c).$ It is obvious.

$(c)\Rightarrow (a).$ Since $f_{n-3}$ is a proper epimorphism and $M_{n-2}$ is $2$-factor serial, by Lemma \ref{5}, $M_{n-3}$ is
$t$-factor serial for some $t\geq 3$. If $t> 3$, then by using Theorem \ref{6} we can construct a sequence of proper epimorphisms and right $\Lambda$-modules with length greater than $n-2$ that satisfy the conditions of $(c)$, which gives a contradiction. Thus $M_{n-3}$ is $3$-factor serial. Continuing in this way, we can see that $M=M_0$ is $n$-factor serial.
\end{proof}

\begin{lemma}\label{9}
Let $\Lambda$ be an artin algebra, $M$ be a non-uniserial $m$-factor serial right $\Lambda$-module of length $l$ and $N$ be a submodule of $M$. Then the following statements hold:
\begin{itemize}
\item[$(a)$] If $\mathit{l}(N)\geq m$, then $N$ is $m$-factor serial.
\item[$(b)$] If $\mathit{l}(N)< m$, then $N$ is $n$-factor serial for some $n<m$.
\end{itemize}
\end{lemma}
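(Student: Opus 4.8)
The plan is to reduce both parts to the submodule characterization of $n$-factor serial modules in Theorem \ref{2}$(c)$, using the trivial but crucial observation that every submodule of $N$ is again a submodule of $M$.

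For part $(a)$ I would first record the hypotheses needed to apply Theorem \ref{2} to $N$: since $M$ is non-uniserial and $m$-factor serial we have $m\geq 2$, and the assumption $\mathit{l}(N)\geq m$ then gives $2\leq m\leq \mathit{l}(N)$. Before invoking the theorem one must check that $N$ itself is non-uniserial. I would argue by contradiction: if $N$ were uniserial, then $\mathit{rad}^{\mathit{l}(N)-m}(N)$ would be a submodule of $N$, hence of $M$, of length exactly $m$; being a submodule of a uniserial module it is uniserial and therefore local, contradicting Theorem \ref{2}$(c)$ for $M$, which requires every length-$m$ submodule of $M$ to be non-local. Hence $N$ is non-uniserial.

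With $N$ non-uniserial and $2\leq m\leq \mathit{l}(N)$ in hand, it remains to verify condition $(c)$ of Theorem \ref{2} for $N$ relative to the integer $m$, and this is immediate: any non-zero submodule $K$ of $N$ is a non-zero submodule of $M$, so Theorem \ref{2}$(c)$ applied to $M$ tells us that $K$ is local when $\mathit{l}(K)>m$ and non-local when $\mathit{l}(K)=m$. These are exactly the requirements of $(c)$ for $N$, so the implication $(c)\Rightarrow(a)$ of Theorem \ref{2} yields that $N$ is $m$-factor serial.

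Part $(b)$ is then a short bookkeeping step. Every module is $i$-factor serial for some $i$ with $1\leq i\leq \mathit{l}(N)$: if $N$ is uniserial it is $1$-factor serial, and otherwise $N$ is $n$-factor serial for the integer $2\leq n\leq\mathit{l}(N)$ at which the quotients $N/\mathit{rad}^{j}(N)$ pass from uniserial to non-uniserial (this transition is well-defined because a quotient of a uniserial module is uniserial). In the uniserial case $1<m$ since $m\geq 2$, and in the non-uniserial case $n\leq\mathit{l}(N)<m$; in both cases the factor-serial index of $N$ is strictly less than $m$. I expect the only genuine obstacle to be the non-uniseriality of $N$ in part $(a)$ when $\mathit{l}(N)>m$; once that is settled, both statements follow mechanically from Theorem \ref{2}.
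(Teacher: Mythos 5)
Your proof is correct, and it routes through Theorem \ref{2} in a genuinely different way than the paper does. The paper's proof of part $(a)$ uses Theorem \ref{2} in its structural form: since $M$ is $m$-factor serial, every submodule of length at least $m$ must equal $\mathit{rad}^{i}(M)$ for some $0\leq i\leq l-m$ (this is the assertion inside the paper's proof of $(b)\Rightarrow(c)$ of Theorem \ref{2}, resting on locality of the modules $\mathit{rad}^{i}(M)$ together with Nakayama's lemma), and it then verifies the definition of $m$-factor serial directly on $N=\mathit{rad}^{i}(M)$, implicitly using that $\mathit{rad}^{j}(N)=\mathit{rad}^{i+j}(M)$, that $\mathit{rad}^{i}(M)/\mathit{rad}^{l-m}(M)$ is a submodule of the uniserial module $M/\mathit{rad}^{l-m}(M)$, and that $\mathit{rad}^{i}(M)/\mathit{rad}^{l-m+1}(M)$ contains the non-simple semisimple top of $\mathit{rad}^{l-m}(M)$. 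You instead exploit the fact that condition $(c)$ of Theorem \ref{2} is inherited by submodules, applying $(a)\Rightarrow(c)$ to $M$ and then $(c)\Rightarrow(a)$ to $N$; this avoids any computation with radical powers of $N$, but it forces you to check separately that $N$ is non-uniserial (a hypothesis of Theorem \ref{2}), which you do correctly: a uniserial $N$ of length at least $m$ would contain a uniserial, hence local, non-zero submodule of length exactly $m$, contradicting $(c)$ for $M$. The paper's route gets non-uniseriality of $N$ for free and yields the stronger byproduct that $N$ is literally a radical power of $M$, while your route is shorter and leaves fewer details unstated. For part $(b)$ the paper simply says it ``follows from the definition''; your argument supplies the underlying fact, namely that every module of length $l'$ is $i$-factor serial for some $1\leq i\leq l'$ because uniseriality of the quotients $N/\mathit{rad}^{j}(N)$ is monotone in $j$, so on this point your write-up is the more complete of the two.
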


\begin{proof} $(a)$. Assume that $\mathit{l}(N)\geq m$. By Theorem \ref{2}, there exists a positive integer $i$, $0\leq i\leq l-m$, such that $N=\mathit{rad}^{i}(M)$. Thus $\mathit{l}(N)=\mathit{l}(\mathit{rad}^{i}(M))=l-i$. Then $\frac{N}{\mathit{rad}^{l-i-m}(N)}$ is uniserial and $\frac{N}{\mathit{rad}^{l-i-m+1}(N)}$ is not uniserial. Therefore $N$ is $m$-factor serial.

$(b)$. It follows from the definition.
\end{proof}

\begin{corollary}\label{16}
Let $\Lambda$ be an artin algebra, $M$ and $N$ be two right $\Lambda$-modules and $f:N\rightarrow M$ be a monomorphism. If $M$ is $m$-factor serial, then $N$ is $n$-factor serial for some $n\leq m$.
\end{corollary}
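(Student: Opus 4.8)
The statement is almost immediate once one reduces it to the situation of Lemma \ref{9}. The plan is to first exploit the monomorphism to replace $N$ by an isomorphic submodule of $M$, and then to read off the conclusion from the dichotomy already recorded in Lemma \ref{9}.

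\emph{Reduction via the monomorphism.} Since $f\colon N\to M$ is injective, $N$ is isomorphic to its image $f(N)$, which is a submodule of $M$, and the property of being $i$-factor serial depends only on the isomorphism class of a module (it is defined purely in terms of lengths of the quotients $M/\mathit{rad}^{i}(M)$ and whether they are uniserial). Hence I may assume without loss of generality that $N\subseteq M$, and it suffices to bound the factor-serial invariant of a submodule of $M$.

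\emph{The uniserial base case.} If $m=1$, then $M$ is uniserial, so it possesses a unique composition series; every submodule of $M$ then sits inside this series and is again uniserial. Thus $N$ is $1$-factor serial and the conclusion holds with $n=1\leq m$. This case must be isolated because Lemma \ref{9} is stated only for the non-uniserial module $M$.

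\emph{The main case.} If $m\geq 2$, then $M$ is non-uniserial and $m$-factor serial, so Lemma \ref{9} applies directly to the submodule $N$. If $\mathit{l}(N)\geq m$, part $(a)$ gives that $N$ is $m$-factor serial, so $n=m$; and if $\mathit{l}(N)<m$, part $(b)$ gives that $N$ is $n$-factor serial for some $n<m$. In either subcase $N$ is $n$-factor serial with $n\leq m$, which is exactly the assertion. There is no substantive obstacle here: the only point requiring care is recognizing that Lemma \ref{9} presupposes $M$ non-uniserial, which is why the trivial case $m=1$ is treated on its own before invoking it.
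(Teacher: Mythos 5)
Your proof is correct and follows essentially the same route as the paper, whose entire proof is simply ``It follows from Lemma \ref{9}'': you replace $N$ by its image $f(N)\subseteq M$ and apply the dichotomy of Lemma \ref{9}. Your explicit treatment of the case $m=1$ (where Lemma \ref{9} does not formally apply, since it assumes $M$ non-uniserial) is a point of care that the paper's one-line proof leaves implicit.
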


\begin{proof}
It follows from Lemma \ref{9}.
\end{proof}

The following theorem is the main theorem of this section which gives a partition for representation-finite artin algebras.

\begin{theorem}\label{10}
Let $\Lambda$ be an artin algebra. Then $\Lambda$ is representation-finite if and only if there exists a positive integer $n$ such that $\Lambda$ is right $n$-Nakayama.
\end{theorem}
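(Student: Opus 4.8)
The plan is to prove the two implications separately; the converse is essentially formal, while all the real work sits in the direction ``right $n$-Nakayama $\Rightarrow$ representation-finite.''

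First I would settle the converse. For any module $M$ of length $l$, the quotients $M/\mathit{rad}^{k}(M)$ are successive quotients of one another, and since a quotient of a uniserial module is uniserial, the set of $k$ for which $M/\mathit{rad}^{k}(M)$ is uniserial is an initial segment $\{0,1,\dots,k^{\ast}\}$. Hence $M$ is $j$-factor serial for exactly one value $j=j(M)$ (with $j=1$ when $M$ is uniserial and $j=l-k^{\ast}$ otherwise), so the factor-serial number is a genuine invariant of $M$. If $\Lambda$ is representation-finite, then taking representatives $M_{1},\dots,M_{r}$ of the indecomposables and setting $n=\max_{t} j(M_{t})$ makes every indecomposable $i$-factor serial for some $i\le n$, with the maximizing module $n$-factor serial; thus $\Lambda$ is right $n$-Nakayama.

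For the forward direction the first step is a uniform length bound. Let $M$ be indecomposable and $i$-factor serial with $2\le i\le n$ and $l:=\mathit{l}(M)>i$. By Theorem \ref{2} the modules $\mathit{rad}^{j}(M)$ are local for $0\le j\le l-i-1$, so each layer $\mathit{rad}^{j}(M)/\mathit{rad}^{j+1}(M)$ in that range is simple; therefore $\mathit{l}(\mathit{rad}^{l-i}(M))=i$ and the uniserial quotient $M/\mathit{rad}^{l-i}(M)$ has length $l-i$. Since a uniserial module has length equal to its Loewy length, $l-i\le \mathit{ll}(\Lambda)$, whence $l\le \mathit{ll}(\Lambda)+n$. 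The cases $i=1$ and $l=i$ give only smaller lengths, so every indecomposable right $\Lambda$-module has length at most $L:=\mathit{ll}(\Lambda)+n$.

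The hard part will be passing from this length bound to finiteness of the number of indecomposables; this is the main obstacle, and it is exactly the content of the first Brauer--Thrall theorem. I would first try an induction on the factor-serial number $n$ using Theorem \ref{6}, writing an indecomposable $n$-factor serial $M$ as an extension $0\to S\to M\to M/S\to 0$ by a simple socle summand for which $M/S$ has factor-serial number $n-1$, reducing the problem to counting the middle terms of such extensions. The difficulty is that these families need not be finite — over an infinite base field $\mathit{Ext}^{1}$ can be positive-dimensional, as the Kronecker algebra already shows — so the reduction alone does not close the count. To finish I would bring in the standard mechanism behind Brauer--Thrall: using the existence of almost split sequences for artin algebras together with the Harada--Sai lemma, a uniform length bound $L$ forces the infinite radical of $\mathit{mod}(\Lambda)$ to vanish, so each component of the Auslander--Reiten quiver is finite and, there being only finitely many indecomposable projectives, $\Lambda$ is representation-finite; alternatively one may simply quote Roiter's theorem \cite{Ro}. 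The subtle point to get right is that whatever finiteness input is used here must not rely on the present theorem, since the intended dividend of the length bound established above is precisely to recover and sharpen Brauer--Thrall.
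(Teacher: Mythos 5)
Your proof is correct, and your easy direction coincides with the paper's; but your hard direction takes a genuinely different route. You first extract the uniform bound $\mathit{l}(M)\leq \mathit{ll}(\Lambda)+n$ for every indecomposable $M$ (in substance the paper's Lemma \ref{l17} and Remark \ref{11}, which the paper proves separately and does \emph{not} use inside the proof of Theorem \ref{10}), and then you convert bounded length into finiteness via Harada--Sai plus almost split sequences, or by citing Roiter \cite{Ro}. The paper instead argues by contradiction directly from Auslander's theorem \cite[Theorem 3.1]{A2}: if $\Lambda$ were right $n$-Nakayama and representation-infinite, there would be an infinite chain of proper epimorphisms $\cdots\to M_2\to M_1$ between indecomposables with all finite compositions nonzero; if some $M_j$ is non-uniserial, Lemma \ref{5} forces the factor-serial numbers to increase strictly along the chain, eventually exceeding $n$, while if every $M_i$ is uniserial their lengths are bounded by $\mathit{ll}(\Lambda)$, so the proper epimorphisms must eventually be isomorphisms --- a contradiction either way. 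The difference matters for exactly the reason you flag in your last sentence: ``bounded length implies finite type'' \emph{is} the first Brauer--Thrall theorem (Harada--Sai plus AR-sequences being its standard proof for artin algebras), so your argument, while logically sound as a standalone proof of the statement, would make the paper's subsequent corollary --- that Theorem \ref{10} together with Remark \ref{11} yields \emph{another proof} of Brauer--Thrall I --- vacuous as a contribution. The paper's choice of the epimorphism-chain theorem of \cite{A2} as the sole external input is precisely what keeps that corollary an honest dividend; if you want your version to serve the same purpose, replace the Harada--Sai/Roiter step by \cite[Theorem 3.1]{A2} and run the contradiction through Lemma \ref{5} as above.
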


\begin{proof} Assume that $\Lambda$ is a representation-finite artin algebra and let $\{M_1, M_2, \cdots, M_{t}\}$ be a complete set of isomorphism classes of indecomposable right $\Lambda$-modules. Put $n= max\{n_i| 1\leq i\leq t, \text{$M_i$ is $n_i$-factor serial}\}$. Then $\Lambda$ is right $n$-Nakayama.

Now, let $\Lambda$ be a right $n$-Nakayama algebra. Assume that $\Lambda$ is representation-infinite. Then by \cite[Theorem 3.1]{A2}, there is a sequence of proper epimorphisms between indecomposable right $\Lambda$-modules such
\begin{center}
$\cdots M_{i+1}\buildrel {f_i}\over \longrightarrow M_{i} \buildrel{f_{i-1}} \over\longrightarrow M_{i-1}\buildrel{f_{i-2}}\over\longrightarrow\cdots \buildrel{f_2}\over \longrightarrow M_2\buildrel{f_1}\over\longrightarrow M_{1}$
\end{center}
that for every positive integers $i$ and $n$, $f_{i}...f_{i+n}\neq 0$.
If there is a positive integer $j$ such that $M_j$ is not uniserial, then by Lemma $\ref{5}$ for every positive integer $n$, $M_{j+n}$ is $t$-factor serial for some $t>n$ which gives a contradiction. Now assume that $M_i$ is uniserial for each $i$. It is known that uniserial right $\Lambda$-modules are of bounded length. Then there exists a positive integer $j$ such that for every $i\geqslant j$, $f_i$ is an isomorphism which gives a contradiction. Therefore $\Lambda$ is representation-finite and the result follows.
\end{proof}

Theorem \ref{10} shows that an artin algebra $\Lambda$ is either representation-finite or there exists an infinite sequence $n_1<n_2<n_3<...$ of positive integers such that for each $i$, there exists an $n_i$-factor serial indecomposable right $\Lambda$-module.\\

A right $\Lambda$-module $M$ is called colocal if $soc(M)$ is simple.

\begin{proposition}
Let $\Lambda$ be an artin algebra, $m=\\\max\{m^{'}| P\in ind.proj(\Lambda), \text{$P$ is $m^{'}$-factor  serial}\}$,
$s=\\\max\{s^{'}| I\in ind.inj(\Lambda), \text{$I$ is $s^{'}$-factor serial}\}$,
$t=\\\max\{t^{'}| M\in ind(\Lambda), \text{$M$ is non-local $t^{'}$-factor serial}\}$ and
$k=\\\max\{k^{'}| M\in ind(\Lambda),  \text{$M$ is non-colocal $k^{'}$-factor serial}\}$.
Then the following conditions are equivalent:
\begin{itemize}
\item[$(a)$] $\Lambda$ is right $n$-Nakayama.
\item[$(b)$] $n=\max\{m, t\}$.
\item[$(c)$]$n=\max\{s, k \}$.
\end{itemize}
\end{proposition}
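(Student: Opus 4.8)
The plan is to observe that, by the definition of a right $n$-Nakayama algebra, condition $(a)$ says precisely that $n$ equals the maximum of the factor serial degrees taken over all indecomposable right $\Lambda$-modules; call this maximum $N_0$. It therefore suffices to establish the two equalities $N_0=\max\{m,t\}$ and $N_0=\max\{s,k\}$, since then $(a)\Leftrightarrow n=N_0\Leftrightarrow(b)$ and likewise $(a)\Leftrightarrow n=N_0\Leftrightarrow(c)$, giving the full cycle of equivalences. The strategy for each equality is to split $ind(\Lambda)$ according to a local/non-local (resp. colocal/non-colocal) dichotomy and to identify the local contribution with the projectives (resp. the colocal contribution with the injectives).

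To prove $N_0=\max\{m,t\}$, I would partition $ind(\Lambda)$ into local and non-local modules. The maximal factor serial degree among non-local indecomposables is exactly $t$ by definition, so it remains to show that the maximal degree $t_{loc}$ among local indecomposables equals $m$. Since every indecomposable projective is local, $m\le t_{loc}$ is immediate. For the reverse inequality, let $M$ be a local indecomposable that is $i$-factor serial. If $i=1$ there is nothing to check (note $m\ge 1$). If $i\ge 2$, then $M$ is non-uniserial and has an indecomposable projective cover $P$: when $M$ is itself projective we get $i\le m$ directly, and when $M$ is not projective the cover is a proper epimorphism $P\to M$, so Lemma \ref{5} (applicable as $i>1$) gives that $P$ is $j$-factor serial with $j>i$, whence $i<j\le m$. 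In every case $i\le m$, so $t_{loc}\le m$, hence $t_{loc}=m$ and $N_0=\max\{t_{loc},t\}=\max\{m,t\}$.

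For $N_0=\max\{s,k\}$ I would argue dually, partitioning $ind(\Lambda)$ into colocal and non-colocal modules. The non-colocal part contributes $k$ by definition, and it remains to identify the maximal degree $k_{col}$ among colocal indecomposables with $s$. As every indecomposable injective is colocal, $s\le k_{col}$. Conversely, a colocal indecomposable $M$ has simple socle and so embeds, via a monomorphism, into its injective envelope $E(M)$, which is an indecomposable injective and hence $m'$-factor serial for some $m'\le s$; Corollary \ref{16} then forces the degree of $M$ to be at most $m'$, and so at most $s$. Thus $k_{col}\le s$, giving $k_{col}=s$ and $N_0=\max\{k_{col},k\}=\max\{s,k\}$.

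The main obstacle, and the only place needing genuine care, is the local case of the first equality: the projective cover map $P\to M$ need not be proper, so one must separate $M=P$ (handled directly) from $M\neq P$ (handled by Lemma \ref{5}), while also accounting for the degenerate uniserial case $i=1$. The colocal case is smoother precisely because Corollary \ref{16} applies to any monomorphism, including an isomorphism, so no case distinction arises there.
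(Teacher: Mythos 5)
Your proof is correct and follows essentially the same route as the paper's: both reduce the local indecomposables to their (indecomposable) projective covers via Lemma \ref{5} and the colocal ones to their (indecomposable) injective envelopes via Corollary \ref{16}. You merely spell out the local/non-local and colocal/non-colocal partition and the edge cases ($i=1$, $M$ projective) that the paper's terser argument leaves implicit.
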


\begin{proof} $(a)\Longleftrightarrow (b).$  Let $M$ be an indecomposable non-projective local right $\Lambda$-module and $P\rightarrow M$ be a projective cover of $M$. Since $M$ is local, $P$ is indecomposable. By Lemma \ref{5}, if $P$ is $s$-factor serial and $M$ is $t$-factor serial, then $s\geq t$ and the result follows.

$(a)\Longleftrightarrow (c).$ Let $M$ be an indecomposable non-injective colocal right $\Lambda$-module and $M\rightarrow I$ be an injective envelope of $M$. Since $M$ is colocal, $I$ is indecomposable. By Corollary \ref{16}, if $I$ is $s$-factor serial and $M$ is $t$-factor serial, then $s\geq t$ and the result follows.
\end{proof}

An artin algebra $\Lambda$ is called of local type if every indecomposable right $\Lambda$-module is local. $\Lambda$ is called of colocal type if every indecomposable right $\Lambda$-module is colocal and $\Lambda$ is called of local-colocal type if every indecomposable right $\Lambda$-module is either local or colocal.

\begin{remark}
Let $\Lambda$ be an artin algebra. Then the following statements hold:
\begin{itemize}
\item[$(a)$] Let $\Lambda$ be a representation-finite self-injective algebra. Assume that $m=\\ \max\{m^{'}|P\in ind.proj(\Lambda), \text{$P$ is an $m^{'}$-factor serial right $\Lambda$-module}\}$ and $t=\\ \max\{t^{'}|M\in ind(\Lambda), \text{$M$ is non-local and non-colocal right $t^{'}$-factor serial}\}$. Then $\Lambda$ is right $n$-Nakayama if and only if $n=\max\{m, t\}$.
\item[$(b)$] Let $\Lambda$ be of local type. Then $\Lambda$ is right $n$-Nakayama if and only if
$n=\max\{n_{i}|P_{i}\in ind.proj(\Lambda), \text{$P_{i}$ is $n_{i}$-factor serial}\}$.
\item[$(c)$] Let $\Lambda$ be of colocal type. Then $\Lambda$ is right $n$-Nakayama if and only if
$n=\max\{n_{i}|I_{i}\in ind.inj(\Lambda), \text{$I_{i}$ is $n_{i}$-factor serial}\}$.
\item[$(d)$] Let $\Lambda$ be of local-colocal type. Assume that
$m=\\\max\{m^{'}|P\in ind.proj(\Lambda), \text{$P$ is $m^{'}$-factor serial}\}$ and
$s=\\\max\{s^{'}|I\in ind.inj(\Lambda), \text{$I$ is $s^{'}$-factor serial}\}$. Then $\Lambda$ is right $n$-Nakayama if and only if $n=\max\{m, s\}$.
\end{itemize}
\end{remark}

\begin{lemma}\label{l17}
Let $\Lambda$ be an artin algebra and $M$ be a right $\Lambda$-module of length $t$ and Loewy length $n$. If $M$ is $m$-factor serial, then $\max\left\lbrace  1, \,t-n\right\rbrace \leq m\leq t\leq m+n-1$.
\end{lemma}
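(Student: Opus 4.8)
The plan is to separate the four inequalities into the two easy boundary bounds $1\le m$ and $m\le t$, which are essentially definitional, and the one substantive bound $t\le m+n-1$. Once the latter is in hand, the remaining part of the left inequality comes for free: $t\le m+n-1$ gives $t-n\le m-1\le m$, so together with $1\le m$ we get $\max\{1,\,t-n\}\le m$. Thus the whole chain reduces to proving $t\le m+n-1$.

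For $1\le m$ and $m\le t$ I would simply appeal to the definition of $m$-factor serial for a module of length $t$. The index is always at least $1$, and for $m\ge 2$ the defining condition is phrased in terms of $\mathit{rad}^{t-m}(M)$ and presupposes $t\ge m$; for $m=1$ the module is uniserial and $1\le t$ is automatic. So these two inequalities hold by fiat.

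The crux is $t\le m+n-1$, i.e. $t-m\le n-1$, which I would establish by locating $\mathit{rad}^{t-m}(M)$ in the radical series and using that the Loewy length hypothesis says exactly $\mathit{rad}^{n}(M)=0$ while $\mathit{rad}^{n-1}(M)\neq 0$. First dispose of $m=1$: here $M$ is uniserial, so its composition length equals its Loewy length, $t=n$, and $t=m+n-1$ holds with equality. Now take $m\ge 2$. Since $M$ is $m$-factor serial, $\frac{M}{\mathit{rad}^{t-m+1}(M)}$ is not uniserial, and as every quotient of a uniserial module is uniserial, $M$ itself is not uniserial. On the other hand $\frac{M}{\mathit{rad}^{t-m}(M)}$ \emph{is} uniserial by definition, so $M\neq \frac{M}{\mathit{rad}^{t-m}(M)}$, forcing $\mathit{rad}^{t-m}(M)\neq 0$. (Equivalently, Theorem \ref{2} gives that $\mathit{rad}^{t-m}(M)$ is not local, hence nonzero.)

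Finally I would combine the nonvanishing with the Loewy length: if $t-m\ge n$ then $\mathit{rad}^{t-m}(M)\subseteq \mathit{rad}^{n}(M)=0$, contradicting $\mathit{rad}^{t-m}(M)\neq 0$. Hence $t-m\le n-1$, i.e. $t\le m+n-1$, which closes the chain $\max\{1,\,t-n\}\le m\le t\le m+n-1$. I do not expect a serious obstacle; the only delicate point is the nonvanishing of $\mathit{rad}^{t-m}(M)$, and this is precisely where the non-uniseriality of an $m$-factor serial module with $m\ge 2$ must be invoked.
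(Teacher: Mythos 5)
Your proof is correct, but it takes a genuinely different route from the paper's. The paper proves both halves by induction via Theorem \ref{6}: for an $m$-factor serial module there is a simple submodule $S\subseteq \mathit{soc}(M)$ with $\frac{M}{S}$ being $(m-1)$-factor serial, and the bound $\max\{1,\,t-n\}\leq m\leq t$ (induction on $m$) as well as the bound $t\leq m+n-1$ (induction on $t$) are pulled back along these simple quotients, keeping track of the fact that the Loewy length can only drop when passing to $\frac{M}{S}$. You instead reduce everything to the single substantive inequality $t\leq m+n-1$ and prove it directly: for $m\geq 2$, non-uniseriality of $\frac{M}{\mathit{rad}^{t-m+1}(M)}$ forces $M$ itself to be non-uniserial, while $\frac{M}{\mathit{rad}^{t-m}(M)}$ is uniserial by definition, so $\mathit{rad}^{t-m}(M)\neq 0$; combined with $\mathit{rad}^{n}(M)=0$ this yields $t-m\leq n-1$, and the remaining inequalities are definitional or formal consequences. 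Your argument is shorter and more elementary---it uses only the definition and the fact that quotients of uniserial modules are uniserial, with no appeal to the socle-reduction machinery---whereas the paper's proof leans on Theorem \ref{6}, which it has already established and reuses elsewhere. One small caveat: your parenthetical alternative, that Theorem \ref{2} gives $\mathit{rad}^{t-m}(M)$ ``not local, hence nonzero,'' is a non sequitur as stated, since the zero module is not local either (nonvanishing there really comes from the locality of the earlier radical powers, which forces $\mathit{l}(\mathit{rad}^{t-m}(M))=m>0$); but your primary argument does not rely on this remark, so correctness is unaffected.
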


\begin{proof}
First by induction on $m$ we show that $\max\left\lbrace  1, \,t-n\right\rbrace \leq m\leq t$. If $m=1$, then the result is obvious. Assume that the result holds for any $m$-factor serial right $\Lambda$-module and let $M$ be an $(m+1)$-factor serial right $\Lambda$-module of length $t'$ and Loewy length $n'$. By Theorem \ref{6}, there exists a simple submodule $N$ of $M$ such that $\frac{M}{N}$ is $m$-factor serial. $\textit{ll}(\frac{M}{N})=n''$ for some positive integer $n''\leq n'$. By the induction hypothesis $max\left\lbrace  1,\textit{l}(\frac{M}{N})-n'' \right\rbrace \leq m \leq \textit{l} \left( \frac{M}{N}\right)$ which implies that, $max\left\lbrace  1,\textit{l}(M)-n'' \right\rbrace \leq m+1 \leq \textit{l} \left({M}\right)$. Then we have $max\left\lbrace  1,\textit{l}(M)-n' \right\rbrace \leq m+1 \leq \textit{l} \left({M}\right)$ and the result follows. Now by induction on $t$ we show that $t\leq m+n-1$. If $t=1$, then the result is obvious. Assume that the result holds for any right $\Lambda$-module of length $t$ and let $M$ be an $m'$-factor serial right $\Lambda$-module of length $t+1$ and Loewy length $n'$. If $m'=1$, then $n'=t+1$ and the result follows. Now assume that $m'>1$. By Theorem \ref{6}, there exists a simple submodule $N$ of $M$ such that $\frac{M}{N}$ is $(m'-1)$-factor serial. $\textit{ll}(\frac{M}{N})=n''$ for some positive integer $n''\leq n'$. By the induction hypothesis we have $t\leq m'-1+n''-1$. Therefore $t+1\leq m'+n''-1\leq m'+n'-1$ and the result follows.
\end{proof}

\begin{remark}\label{11}
Let $\Lambda$ be an artin algebra of Loewy length $n$ and $M$ be an $m$-factor serial right $\Lambda$-module of length $t$ and Loewy length $n'$. Then by using Lemma \ref{l17} we have $\max\left\lbrace  1, \,t-n\right\rbrace \leq m\leq t\leq m+n-1$.
\end{remark}

An easy consequence of Theorem \ref{10} and Remark \ref{11} gives another proof of the first Brauer-Thrall conjecture for artin algebras.

\begin{corollary}
An artin algebra is either representation-finite or there exist indecomposable modules with arbitrary large length.
\end{corollary}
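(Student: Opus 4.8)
The plan is to establish the dichotomy in its contrapositive form: assuming $\Lambda$ is representation-infinite, I would produce indecomposable modules of unbounded length. First I would invoke Theorem \ref{10}: since $\Lambda$ is representation-infinite, there is no positive integer $n$ for which $\Lambda$ is right $n$-Nakayama. By the observation recorded immediately after Theorem \ref{10}, the failure of the right $n$-Nakayama condition for every $n$ is exactly equivalent to the existence of an infinite strictly increasing sequence $n_1 < n_2 < n_3 < \cdots$ of positive integers together with, for each $i$, an indecomposable right $\Lambda$-module $M_i$ that is $n_i$-factor serial.

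The second step converts the factor-serial index $n_i$ into a lower bound on length. Writing $\mathit{l}(M_i)$ for the length of $M_i$, the estimate of Remark \ref{11} for $m$-factor serial modules gives in particular $m \leq t$, so here $n_i \leq \mathit{l}(M_i)$. Because the $n_i$ are strictly increasing and hence unbounded, the lengths $\mathit{l}(M_i)$ are unbounded as well. Thus $\Lambda$ admits indecomposable modules of arbitrarily large length, which is the desired conclusion.

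The argument is essentially immediate once the two cited results are in place, so I do not expect a genuine obstacle. The only point requiring care is the logical passage in the first step: one must read the definition of right $n$-Nakayama correctly and verify that its failure for \emph{all} $n$ really forces an indecomposable that is $n_i$-factor serial for each term of an unbounded index sequence, rather than for only a bounded family of indices. This is precisely what the remark following Theorem \ref{10} guarantees, and the inequality $m \leq t$ extracted from Remark \ref{11} then carries the index bound over to a length bound, completing the proof.
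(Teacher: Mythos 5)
Your proof is correct and follows exactly the route the paper intends: the paper presents this corollary as "an easy consequence of Theorem \ref{10} and Remark \ref{11}", and you have spelled out precisely that argument, using the observation after Theorem \ref{10} to get an unbounded sequence of factor-serial indices and the inequality $m \leq t$ from Remark \ref{11} to convert indices into lengths.
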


In the following theorem, by using the notion of $n$-factor serial modules, we give an equivalence statement for the second Brauer-Thrall conjecture.

\begin{theorem}
 Let $\Lambda$ be an artin algebra of cardinality $\aleph \geq \aleph_0$, where $\aleph_0$ stands for the cardinality of a countable set. Then the following conditions are equivalent:
 \begin{itemize}
 \item[($a$)] $\Lambda$ is either representation-finite or there exists an infinite sequence of positive integers $n_i\in \mathbb{N}$ such that, for each $i$, there exist $\aleph$ non-isomorphic indecomposable right $\Lambda$-modules of length $n_i$.
 \item[($b$)] $\Lambda$ is either right $n$-Nakayama for some positive integer $n$ or there exists an infinite sequence of positive integers $n_i\in \mathbb{N}$ such that, for each $i$, there exist $\aleph$ non-isomorphic indecomposable $n_i$-factor serial right $\Lambda$-modules.
     \end{itemize}
\end{theorem}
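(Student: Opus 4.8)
The plan is to reduce the claimed equivalence to a purely numerical bookkeeping fact about the two invariants ``length'' and ``factor serial number'', using Theorem \ref{10} to dispose of the first alternatives and Remark \ref{11} to couple the two invariants. Write $c$ for the Loewy length of $\Lambda$, a finite constant since $\Lambda$ is an artin algebra. Abbreviate the two ``second alternatives'' as follows: let $(A)$ be the statement that there is an infinite strictly increasing sequence $n_1<n_2<\cdots$ with $\aleph$ non-isomorphic indecomposables of length $n_i$, and let $(B)$ be the analogous statement with ``of length $n_i$'' replaced by ``indecomposable $n_i$-factor serial''. By Theorem \ref{10}, ``representation-finite'' and ``right $n$-Nakayama for some $n$'' are equivalent, so after substituting one for the other it suffices to prove $(A)\Longleftrightarrow(B)$. (When $\Lambda$ is representation-finite both $(A)$ and $(B)$ fail, so the equivalence is only at issue in the representation-infinite case, but the argument below does not need this assumption.)

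For $(A)\Rightarrow(B)$, I would fix $i$ and look at the $\aleph$ indecomposables of length $n_i$. By Remark \ref{11} each of them is $m$-factor serial for some $m$ with $\max\{1,n_i-c\}\leq m\leq n_i$, so $m$ takes at most $c+1$ values. Since a finite partition of a set of infinite cardinality $\aleph$ has a block of cardinality $\aleph$ (a finite sum of cardinals equals its maximum once that maximum is infinite), there is a value $m_i\in[\max\{1,n_i-c\},\,n_i]$ carrying $\aleph$ non-isomorphic indecomposable $m_i$-factor serial modules. Because $m_i\geq n_i-c$ and the $n_i$ are unbounded, the $m_i$ are unbounded as well; passing to a subsequence along which they strictly increase yields the infinite sequence required by $(B)$.

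The implication $(B)\Rightarrow(A)$ is the mirror image, using the other half of the bound in Remark \ref{11}. Fixing $i$ and looking at the $\aleph$ non-isomorphic indecomposable $m_i$-factor serial modules, their lengths $t$ satisfy $m_i\leq t\leq m_i+c-1$, so $t$ ranges over at most $c$ values. The same infinite pigeonhole principle produces a length $t_i\in[m_i,\,m_i+c-1]$ realized by $\aleph$ of them, and since $t_i\geq m_i$ with the $m_i$ unbounded, a suitable subsequence of the $t_i$ is strictly increasing and furnishes $(A)$.

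The routine parts are the two pigeonhole counts; the only genuine input is Remark \ref{11}, whose two-sided inequality $\max\{1,t-c\}\leq m\leq t\leq m+c-1$ is exactly what forces the length and the factor serial number to differ by at most the fixed constant $c-1$, so that unboundedness of one invariant transfers to the other while the finitely many intermediate values are absorbed by the cardinal pigeonhole. The main point to be careful about is therefore not any single step but the correct use of the infinite pigeonhole: I must invoke that a finite union of sets each of cardinality $<\aleph$ cannot have cardinality $\aleph$, which holds for every infinite $\aleph$, and I must check in each direction that the selected values are genuinely unbounded so that an honest infinite strictly increasing sequence can be extracted.
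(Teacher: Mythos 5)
Your proof is correct and follows essentially the same route as the paper: Theorem \ref{10} handles the first alternatives, and Remark \ref{11} plus the infinite pigeonhole principle transfers $\aleph$-many indecomposables from one invariant (length) to the other (factor serial number) within a window of width bounded by the Loewy length. If anything, you are more careful than the paper, which leaves the cardinal pigeonhole and the extraction of a strictly increasing subsequence of unbounded values implicit.
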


\begin{proof}$(a)\Longrightarrow (b).$ If $\Lambda$ is representation-finite, then by Theorem \ref{10}, $\Lambda$ is right $n$-Nakayama for some positive integer $n$. Now let $n_1<n_2< ...$ be an infinite sequence of positive integers that for each $i$, there exist $\aleph$ non-isomorphic indecomposable right $\Lambda$-modules of length $n_i$. By Remark \ref{11}, every indecomposable right $\Lambda$-module of length $n_i$ is
 $m$-factor serial for some positive integer $m$, $ \max\left\lbrace  1, \,n_{i}-t\right\rbrace \leq m\leq n_i$, where $t=ll(\Lambda)$. Then there exists a positive integer $m_i$, $\max\left\lbrace  1, \,n_{i}-t\right\rbrace \leq m_i\leq n_i$ such that there exist $\aleph$ non-isomorphic indecomposable $m_i$-factor serial right $\Lambda$-modules of length $n_i$. Therefore there exists an infinite sequence of positive integers $n'_i\in \mathbb{N}$ such that, for each $i$, there exist $\aleph$ non-isomorphic indecomposable $n'_i$-factor serial right $\Lambda$-modules.

$(b)\Longrightarrow (a).$ If $\Lambda$ is right $n$-Nakayama for some positive integer $n$, then by Theorem \ref{10}, $\Lambda$ is representation-finite. Now assume that there exists an infinite sequence $m_1<m_2< ...$ of positive integers such that for any $m_i$, there exist $\aleph$ non-isomorphic indecomposable $m_i$-factor serial right $\Lambda$-modules. By Remark \ref{11}, every indecomposable $m_i$-factor serial right $\Lambda$-module is of length $l$ for some positive integer $l$, $m_i\leq l \leq m_i+t-1$, where $ll(\Lambda)=t$. Then there exists a positive integer $n'_i$, $m_i\leq n'_i \leq m_i+t-1$ such that there exist $\aleph$ non-isomorphic indecomposable $m_i$-factor serial right $\Lambda$-modules of length $n'_i$. Therefore there exists an infinite sequence of positive integers $n_i\in \mathbb{N}$ such that, for each $i$, there exist $\aleph$ non-isomorphic indecomposable right $\Lambda$-modules of length $n_i$ and the result follows.
\end{proof}

\begin{definition}
We say that an artin algebra $\Lambda$ is $n$-Nakayama if it is both right $n$-Nakayama and left $n$-Nakayama.
\end{definition}

A finite dimensional $K$-algebra $\Lambda=\frac{KQ}{I}$ is called special biserial algebra provided $(Q,I)$ satisfying the following conditions:
\begin{itemize}
\item[$(1)$] For any vertex $a\in Q_0$, $|a^+|\leq 2$ and   $|a^-|\leq 2$.
\item[$(2)$] For any arrow $\alpha \in Q_1$, there is at most one arrow $\beta$ and at most one arrow $\gamma$ such that $\alpha\beta$ and $\gamma\alpha$ are not in $I$.
\end{itemize}

Let $\Lambda=\frac{KQ}{I}$ be an special biserial finite dimensional $K$-algebra. A walk $w=c_1c_2\cdots c_n$ is called string of length $n$ if $c_i\neq c_{i+1}^{-1}$ for each $i$ and no subwalk of $w$ or its inverse is in $I$. In addition for any $a\in Q_0$ we have two strings of length zero, denoted by $1_{(a,1)}$ and $1_{(a,-1)}$. We have $s(1_{(a,1)})=t(1_{(a,1)})=s(1_{(a,-1)})=t(1_{(a,-1)})=a$ and $1_{(a,1)}^{-1}=1_{(a,-1)}$. A string $w=c_1c_2\cdots c_n$ with $s(w)=t(w)$ such that each
power $w^m$ is a string, but $w$ itself is not a proper power of any strings is called band. We denote by $\mathcal{S}(\Lambda)$ and $\mathcal{B}(\Lambda)$ the set of all strings of $\Lambda$ and the set of all bands of $\Lambda$, respectively. Let $\rho$ be the equivalence relation on $\mathcal{S}(\Lambda)$ which identifies every string $w$ with its inverse $w^{-1}$ and $\sigma$ be the equivalence relation on $\mathcal{B}(\Lambda)$ which identifies every band $w=c_1c_2\cdots c_n$ with the cyclically permuted bands $w_{(i)}=c_ic_{i+1}\cdots c_nc_1\cdots c_{i-1}$ and their inverses $w_{(i)}^{-1}$, for each $i$.
Butler and Ringel in \cite{BR} for each string $w$ defined a unique string module $M(w)$ and for each band $v$ defined a family of band modules $M(v,m,\varphi)$ with $m\geq 1$ and $\varphi\in Aut(K^m)$. Let $\widetilde{\mathcal{S}}(\Lambda)$ be the complete set of representatives of strings relative to $\rho$ and $\widetilde{\mathcal{B}}(\Lambda)$  be the complete set of representatives of bands relative to $\sigma$. The special biserial algebra $\Lambda=\frac{KQ}{I}$ is called string algebra if the ideal $I$ can be generated by zero relations. Butler and Ringel in \cite{BR} proved that, the modules $M(w)$, $w\in \widetilde{\mathcal{S}}(\Lambda)$ and the modules $M(v,m,\varphi)$ with $v\in \widetilde{\mathcal{B}}(\Lambda)$, $m\geq 1$ and $\varphi\in Aut(K^m)$ provide the complete list of pairwise non-isomorphic indecomposable $\Lambda$-modules.
Indecomposable $\Lambda$-modules are either string modules or band modules. If $\Lambda$ is a representation-finite string algebra, then all indecomposable $\Lambda$-modules are string modules.\\

In the following example, we show that for every positive integer $n>2$ there exists an $n$-Nakayama algebra.

\begin{example}
Let $t>1$ be a positive integer and $\Lambda_t$ be a $K$-algebra given by the quiver\\

$$\hskip .5cm \xymatrix{
&{2t-1}\ar @{<-}[dl]_{\beta_{1}}\ar [r]^{\beta_{2}}&{2t-3}\ar[r]^{\beta_{3}}&\cdots\cdots&\ar[r]^{\beta_{t-1}}&3\ar[dr]^{\beta_{t}}\\
{2t}\ar[dr]_{\alpha_1}&&&&&&{1}\ar @{<-}[dl]^{\alpha_t}\\
&{2t-2}\ar [r]_{\alpha_{2}}&{2t-4}\ar[r]_{\alpha_3}&\cdots\cdots&\ar[r]_{\alpha_{t-1}}&2}\hskip .5cm$$
bounded by $\alpha_{1}\alpha_{2}=0$ and $\beta_{t-1}\beta_{t}=0$. $\Lambda_t$ is an string algebra which has no bands. Then $\Lambda_t$ is representation-finite and any indecomposable $\Lambda_t$-module is of the form $M(w)$, for some $w\in \widetilde{\mathcal{S}}(\Lambda_t)$. $P(2t)$ and $I(1)$ have the greatest dimension between indecomposable right $\Lambda_t$-modules. $\dim P(2t)=\dim I(1)=t+1$ and $I(1)$ is not local, then by Corollary \ref{4}, $I(1)$ is a $(t+1)$-factor serial right $\Lambda_t$-module. On the other hand $D(P(2t))$ and $D(I(1))$ have the greatest dimension between indecomposable left $\Lambda_t$-modules, where $D$ is the standard $K$-duality. $\dim D(P(2t))=\dim D(I(1))=t+1$ and $D(P(2t))$ is not local, then by Corollary \ref{4}, $D(P(2t))$ is a $(t+1)$-factor serial left $\Lambda_t$-module. Therefore $\Lambda_t$ is a $(t+1)$-Nakayama algebra.
\end{example}


\section{right $n$-Nakayama hereditary algebras}

Let $K$ be an algebraically closed field and $Q$ be a finite, connected and acyclic quiver.
Then $\Lambda=KQ$ is a finite dimensional hereditary $K$-algebra. By the Gabriel's Theorem \cite{G}, $\Lambda$ is representation-finite if and only if the underlying graph $\overline{Q}$ of $Q$ is one of the Dynkin diagrams $\mathbb{A}_n$, $\mathbb{D}_m$ with $m\geq 4$, $\mathbb{E}_6, \mathbb{E}_7$ and $\mathbb{E}_8$. If $\overline{Q}$ is a Dynkin diagram, then there is a bijection between the set of isomorphism classes of indecomposable right $\Lambda$-modules and the set of positive roots of the quadratic form $q_Q$ of $Q$.

Let $Q$ be a quiver such that the underlying graph $\overline{Q}$ of $Q$ is a Dynkin diagram $\mathbb{A}_n$ for some positive integer $n$. We have three cases:
\begin{itemize}
\item[$(1)$] For any $a\in Q_0$, $|a^{+}| \leq 1$ and $|a^{-}| \leq 1$. In this case $KQ$ is a Nakayama algebra.
\item[$(2)$] For any $a\in Q_0$,  $|a^{-}|\leq 1$ and there exists a vertex $b\in Q_0$ such that $|b^{+}|=2$.
\item[$(3)$] There exists a vertex $b\in Q_0$ such that $|b^{-}|=2$.
\end{itemize}

\begin{proposition}\label{14}
Let $\Lambda=KQ$ be a finite dimensional hereditary $K$-algebra such that the underlying graph $\overline{Q}$ of $Q$ is a Dynkin diagram $\mathbb{A}_n$, for some positive integer $n$. Then the following statements hold:
\begin{itemize}
\item[$(a)$] If for any $a\in Q_0$, $|a^-|\leq 1$ and there exists a vertex $b\in Q_0$ such that $|b^+|= 2$, then $\Lambda$ is right $(n-1)$-Nakayama.
\item[$(b)$] If there exists a vertex $b\in Q_0$ such that $|b^-|= 2$, then $\Lambda$ is right $n$-Nakayama.
\end{itemize}
\end{proposition}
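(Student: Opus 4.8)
The plan is to use the explicit description of the indecomposables over $\Lambda=KQ$ when $\overline{Q}=\mathbb{A}_n$. By Gabriel's theorem these are exactly the interval modules $M[i,j]$, $1\le i\le j\le n$, where $M[i,j]$ is one-dimensional at each vertex of the subpath $i,i+1,\dots,j$, has identity maps along the connecting arrows, and is zero elsewhere. Thus $\mathit{l}(M[i,j])=j-i+1\le n$, so $M[1,n]$ is the unique indecomposable of length $n$. For such a module $\operatorname{rad}(M[i,j])$ is the subrepresentation spanned by the images of the arrows, so $\operatorname{top}(M[i,j])$ is the direct sum of the simples $S_v$ over the sources $v$ of the sub-orientation on $[i,j]$; in particular $M[i,j]$ is local precisely when this subpath has a single source. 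I would then combine this with Corollary \ref{4} (a module of length $l\ge 2$ is $l$-factor serial iff it is not local) and with the fact that the factor-serial type of any module is at most its length.

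For $(a)$, the hypotheses $|a^-|\le 1$ for all $a$ and $|b^+|=2$ for some $b$ first pin down the global shape of $Q$. A vertex with $|b^+|=2$ has degree two, hence is internal ($1<b<n$), and if there were two sources then the subpath between them would contain a sink $c$ with $|c^-|=2$, contradicting $|a^-|\le 1$. So $b$ is the unique source and every arrow flows outward from $b$ toward the two ends. Consequently every interval containing $b$ has $b$ as its only source and every interval missing $b$ is linearly oriented with a single source; thus \emph{every} indecomposable is local. By Corollary \ref{4} no indecomposable is $l$-factor serial for its own length $l$, so every type is at most $l-1\le n-1$. For the witness I would compute $\operatorname{rad}(M[1,n])\cong M[1,b-1]\oplus M[b+1,n]$, a direct sum of two nonzero modules and hence not local, while $M[1,n]$ itself is local; Theorem \ref{2} then gives that $M[1,n]$ is exactly $(n-1)$-factor serial. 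Hence $\Lambda$ is right $(n-1)$-Nakayama.

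For $(b)$, I would show that a vertex $b$ with $|b^-|=2$ forces at least two sources in $Q$. Both edges at $b$ point into $b$; tracing arrows backwards from $b-1$ stays inside $\{1,\dots,b-1\}$ and must terminate at a source there, and likewise tracing backwards from $b+1$ yields a source in $\{b+1,\dots,n\}$. These are two distinct sources of $Q$, so $\operatorname{top}(M[1,n])$ has length at least two and $M[1,n]$ is a non-local module of length $n$. By Corollary \ref{4} it is $n$-factor serial, and since every indecomposable has length at most $n$ its type is at most $n$; therefore the maximal type is exactly $n$ and $\Lambda$ is right $n$-Nakayama.

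The routine part here is the interval description of the modules and the reading off of tops and radicals. The real content, and the step I expect to be most delicate, is the orientation combinatorics: establishing in $(a)$ that the hypotheses force a single internal source with outward flow, and in $(b)$ that an interior sink produces two genuine sources. Once these structural facts are secured, the factor-serial bookkeeping collapses to one application of Corollary \ref{4} in case $(b)$ and one radical-filtration computation via Theorem \ref{2} in case $(a)$.
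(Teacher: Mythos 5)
Your proposal is correct and takes essentially the same route as the paper: in both arguments the witness is the sincere indecomposable $M$ with $\mathbf{dim}M=[1,1,\dots,1]^t$, whose locality (case $(a)$, via Theorem \ref{2} applied to $M$ and its non-local radical) or non-locality (case $(b)$, via Corollary \ref{4}) determines its factor-serial type, while all other indecomposables are dismissed because their type is bounded by their length, which is smaller. You merely make explicit the orientation combinatorics and the radical computation that the paper leaves implicit behind the phrase ``$Q$ is of the form.''
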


\begin{proof} $(a).$  By the hypothesis, $Q$ is of the form
 $$\hskip .5cm \xymatrix{
{\bsm\bullet\esm}_2\ar@{<-}[r]^{\alpha_{2}}&{\bsm\bullet\esm}_4\cdots \ar@{<-}[r]^{\alpha_{n-3}}& {\bsm\bullet\esm}_{n-1}\ar@{<-}[r]^{\alpha_{n-1}}&{\bsm\bullet\esm}_n \ar[r]^{\alpha_{n-2}}&{\bsm\bullet\esm}_{n-2}\ar[r]^{\alpha_{n-4}}&\cdots {\bsm\bullet\esm}_3\ar [r]^{\alpha_{1}}&{\bsm\bullet\esm}_1
 }\hskip .5cm $$
Then there exists an indecomposable right $\Lambda$-module $M$ with the dimension vector $\mathbf{dim}M=[1, 1, \cdots, 1]^t$ of length $n$. Also for any indecomposable right $\Lambda$-module $N$ that $M\ncong N$, $\mathit{l}(N)< \mathit{l}(M)$. $M$ is local and $\mathit{rad}(M)$ is not local. Then by Theorem \ref{2}, $M$ is $(n-1)$-factor serial. Therefore $\Lambda$ is right $(n-1)$-Nakayama.

$(b).$ In this case, by the hypothesis, $Q$ is of the form
 $$\hskip .5cm \xymatrix{
_1{\bsm\bullet\esm}\cdots \ar[r]&{\bsm\bullet\esm}_b\ar@{<-}[r]&\cdots {\bsm\bullet\esm}_n
 }\hskip .5cm $$
Then there exists an indecomposable right $\Lambda$-module $M$ with the dimension vector $\mathbf{dim}M=[1, 1, \cdots, 1]^t$ of length $n$. Also for any indecomposable right $\Lambda$-module $N$ that $M\ncong N$, $\mathit{l}(N)< \mathit{l}(M)$. Since $Q$ has at least two sources, $M$ is not local and by Proposition \ref{4}, $M$ is $n$-factor serial. Then $\Lambda$ is right $n$-Nakayama.
\end{proof}

Proposition \ref{14} shows that for any $n\in \mathbb{N}$, there exists a right $n$-Nakayama algebra of type $\mathbb{A}_n$.

\begin{proposition}\label{24}
Let $\Lambda=KQ$ be a representation-finite hereditary $K$-algebra. Then the following statements hold:
\begin{itemize}
\item[$(a)$] If the underlying graph $\overline{Q}$ of $Q$ is a Dynkin diagram $\mathbb{D}_n$ for some $n\geq 4$, then $\Lambda$ is a $(2n-3)$-Nakayama algebra.
\item[$(b)$] If the underlying graph $\overline{Q}$ of $Q$ is a Dynkin diagram $\mathbb{E}_6$, then $\Lambda$ is an $11$-Nakayama algebra.
\item[$(c)$] If the underlying graph $\overline{Q}$ of $Q$ is a Dynkin diagram $\mathbb{E}_7$, then $\Lambda$ is a $17$-Nakayama algebra.
\item[$(d)$] If the underlying graph $\overline{Q}$ of $Q$ is a Dynkin diagram $\mathbb{E}_8$, then $\Lambda$ is a $29$-Nakayama algebra.
\end{itemize}
\end{proposition}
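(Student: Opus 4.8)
The plan is to reduce all four cases to a single statement: for every orientation of a Dynkin quiver of type $\mathbb{D}_n$, $\mathbb{E}_6$, $\mathbb{E}_7$ or $\mathbb{E}_8$, the maximum of the factor-serial invariants over all indecomposables equals the height $L$ of the highest root, namely $L=2n-3$, $11$, $17$, $29$ respectively. By Theorem \ref{10} the algebra is right $m$-Nakayama for some $m$, and by definition this $m$ is exactly the largest factor-serial invariant occurring; so it suffices to (i) bound every factor-serial invariant by $L$ and (ii) exhibit one indecomposable that is $L$-factor serial.

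For (i) I would invoke Gabriel's Theorem \cite{G}: the indecomposable right $\Lambda$-modules are in bijection with the positive roots of the corresponding root system, and under this bijection the length of a module equals the height (sum of coordinates) of its root. The unique root of maximal height is the highest root $\theta$, whose height is $2n-3$ for $\mathbb{D}_n$ and $11,17,29$ for $\mathbb{E}_6,\mathbb{E}_7,\mathbb{E}_8$ — a direct computation from the marks of the highest root. Hence every indecomposable has length at most $L$, and by Remark \ref{11} its factor-serial invariant is at most its length, so at most $L$.

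For (ii) I would take $M_\theta$, the indecomposable of length $L$ with $\mathbf{dim}\,M_\theta=\theta$, and show it is non-local; then Corollary \ref{4} gives that $M_\theta$ is $L$-factor serial. The crucial point is that the underlying graph is a tree, so between any two vertices there is at most one directed path; consequently every indecomposable projective $P_i$ is thin, i.e. $\mathbf{dim}\,P_i$ has all entries in $\lbrace 0,1\rbrace$. If $M_\theta$ were local it would be a quotient of its indecomposable projective cover $P_i$, forcing $\mathbf{dim}\,M_\theta\leq\mathbf{dim}\,P_i$ coordinatewise; but $\theta$ has a coordinate equal to $2$ in type $\mathbb{D}_n$ (and a coordinate $\geq 2$ in the exceptional types as well), which no thin module can dominate, a contradiction. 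Thus $M_\theta$ is non-local and $\Lambda$ is right $L$-Nakayama.

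Finally, to upgrade ``right $L$-Nakayama'' to ``$L$-Nakayama'' I would pass to $\Lambda^{op}=KQ^{op}$: its quiver has the same (Dynkin) underlying graph, hence the same highest root and the same $L$, and the identical argument shows $\Lambda^{op}$ is right $L$-Nakayama, i.e. $\Lambda$ is left $L$-Nakayama. The main obstacle is step (ii), the non-locality of $M_\theta$; everything hinges on the thinness of projectives forced by the tree shape, which is exactly what fails for the linearly oriented type $\mathbb{A}_n$, where $M_\theta$ can be the projective at a unique source and is then local, producing the $(n-1)$-Nakayama phenomenon of Proposition \ref{14}. The rest is the routine verification of the four root-height values.
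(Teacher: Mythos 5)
Your proposal is correct and follows essentially the same route as the paper: Gabriel's theorem yields the highest-root indecomposable $M_\theta$ of maximal length $L\in\lbrace 2n-3,11,17,29\rbrace$, non-locality together with Corollary \ref{4} makes it $L$-factor serial, the bound ``factor-serial invariant $\leq$ length'' handles all other indecomposables, and passing to $\Lambda^{op}=KQ^{op}$ gives the left-sided statement. Your write-up is in fact more complete than the paper's: the paper merely asserts that the maximal-length module is not local, whereas you justify this via the thinness of projectives over a tree-shaped quiver combined with the highest root having a coordinate at least $2$.
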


\begin{proof}$(a).$ By using the Gabriel's Theorem, there exists an indecomposable right $\Lambda$-module $M$ that $\mathit{l}(M)=2n-3$ and for any indecomposable right $\Lambda$-module $N$ that $M\ncong N$, $\mathit{l}(M)\geq \mathit{l}(N) $. $M$ is not local and so by Proposition \ref{4}, $M$ is a $(2n-3)$-factor serial right $\Lambda$-module. Therefore $\Lambda$ is right $(2n-3)$-Nakayama. The category of finite dimensional left $\Lambda$-modules is equivalent to the category of $mod \Lambda^{op}$, where $\Lambda^{op}\cong KQ^{op}$ and $Q^{op}$ is the opposite of the quiver $Q$. Then the similar argument shows that $\Lambda$ is also left $(2n-3)$-Nakayama and the result follows.

By the similar argument we can prove $(b), (c)$ and $(d)$.
\end{proof}

Let $\Lambda=KQ$ be a representation-finite hereditary $K$-algebra. If $\overline{Q}$ is a Dynkin diagram $\mathbb{A}_n$, Proposition \ref{14} shows that the Nakayama type of $\Lambda$ is depends on the orientation of $Q$. But in the other cases ($\overline{Q}$ is one of the Dynkin diagrams $\mathbb{D}_m, \mathbb{E}_6, \mathbb{E}_7$ and $\mathbb{E}_8$), the Nakayama type of $\Lambda$ dose not depend on the orientation of $Q$.

\section{right $n$-coNakayama algebras}

In this section we define right $n$-coNakayama algebras and by using the standard duality we obtain the connection between right $n$-coNakayama algebras and left $n$-Nakayama algebras.

\begin{definition}
Let $\Lambda$ be an artin algebra and $M$ be a right $\Lambda$-module  of length $l$.
\begin{itemize}
\item[$(a)$] $M$ is called $1$-cofactor serial (uniserial) if $M$ has a unique composition series.
\item[$(b)$] Let $ l\geq n>1 $. We say that $M$ is $n$-cofactor serial if $\mathit{soc}^{l-n}(M)$ is uniserial and $\mathit{soc}^{l-n+1}(M)$ is not uniserial.
\end{itemize}
\end{definition}

\begin{definition}
We say that an artin algebra $\Lambda$ is right $n$-coNakayama if every indecomposable right $\Lambda$-module is $i$-cofactor serial for some $1 \leqslant i \leqslant n$ and there exists at least one indecomposable $n$-cofactor serial right $\Lambda$-module. We can define left $n$-coNakayama algebras analogously.
\end{definition}

Let $R$ be a commutative artinian ring and $\Lambda$ be an artin $R$-algebra. Let $\{S_1,\cdots, S_n\}$ be the complete set of isomorphism classes of simple right $R$-modules, $I(S_i)$ be the injective envelope of $S_i$ and $J=\bigsqcup_{i=1}^{n}I(S_i)$. Then the contravariant $R$-functor $D=Hom_R(-,J):mod \Lambda\rightarrow mod (\Lambda^{op})$ is a duality, which is called standard duality \cite[Theorem II.3.3]{ARS}.

\begin{proposition}\label{12}
Let $\Lambda$ be an artin algebra, $M$ be a right $\Lambda$-module and $\mathit{D}$ be the standard duality. Then
 $M$ is an $n$-factor serial left $\Lambda$-module if and only if $\mathit{D}(M)$ is an $n$-cofactor serial right $\Lambda$-module.
\end{proposition}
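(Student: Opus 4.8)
The plan is to push the two defining conditions of $n$-factor seriality through the standard duality $D$, relying on three structural properties of $D$: it is an exact contravariant functor, it preserves composition length, and it interchanges the radical filtration of $M$ with the socle filtration of $D(M)$. Since $D$ is a duality we have $\mathit{l}(D(M))=\mathit{l}(M)$, so I may use a single symbol $l$ for this common length; this is exactly the length appearing in the index $l-n$ of both definitions, so the indices will match on the nose.

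The technical core is the isomorphism
\[ D\!\left(\frac{M}{\mathit{rad}^{i}(M)}\right)\cong \mathit{soc}^{i}(D(M))\qquad(i\geq 0), \]
asserting that $D$ turns the radical series of $M$ into the socle series of $D(M)$. I would prove it by induction on $i$. The case $i=0$ is trivial and $i=1$ is the standard fact $D(\mathit{top}(M))=\mathit{soc}(D(M))$, i.e. $D$ converts the largest semisimple quotient into the largest semisimple submodule. For the inductive step I would apply $D$ to the short exact sequence $0\to \mathit{rad}^{i}(M)/\mathit{rad}^{i+1}(M)\to M/\mathit{rad}^{i+1}(M)\to M/\mathit{rad}^{i}(M)\to 0$ and also to $0\to \mathit{rad}^{i}(M)\to M\to M/\mathit{rad}^{i}(M)\to 0$. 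The second sequence yields $D(M)/\mathit{soc}^{i}(D(M))\cong D(\mathit{rad}^{i}(M))$, whose socle is $D(\mathit{top}(\mathit{rad}^{i}(M)))=D(\mathit{rad}^{i}(M)/\mathit{rad}^{i+1}(M))$; the first sequence shows $D(M/\mathit{rad}^{i+1}(M))$ contains $\mathit{soc}^{i}(D(M))$ with the same semisimple quotient. Hence $D(M/\mathit{rad}^{i+1}(M))\subseteq \mathit{soc}^{i+1}(D(M))$, and comparing lengths forces equality.

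With this identity established, the rest is a formal translation. I also use that uniseriality is self-dual: $N$ is uniserial if and only if $D(N)$ is, since $D$ reverses the submodule lattice and hence carries chains to chains. For $n>1$, applying $D$ converts ``$M/\mathit{rad}^{l-n}(M)$ is uniserial'' into ``$\mathit{soc}^{l-n}(D(M))$ is uniserial'' and ``$M/\mathit{rad}^{l-n+1}(M)$ is not uniserial'' into ``$\mathit{soc}^{l-n+1}(D(M))$ is not uniserial''; these are precisely the conditions defining $n$-cofactor seriality of $D(M)$. The case $n=1$ is just the self-duality of uniseriality. As $D$ is a duality each implication is reversible, giving the stated equivalence; the argument is symmetric in left and right modules, so it applies verbatim with the roles of $M$ and $D(M)$ as in the statement.

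The main obstacle is establishing the inductive identity $D(M/\mathit{rad}^{i}M)\cong \mathit{soc}^{i}(D(M))$; everything after it is bookkeeping. If one is willing to quote the well-known fact that the standard duality interchanges the Loewy and socle series, the proposition becomes immediate.
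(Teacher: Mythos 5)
Your proposal is correct and takes essentially the same approach as the paper: the paper's entire proof consists of quoting the identity $\frac{\mathit{D}(M)}{\mathit{rad}^i(\mathit{D}(M))}\cong \mathit{D}(\mathit{soc}^{i}(M))$ from \cite[Proposition V.1.3.]{As} (equivalent to your $\mathit{D}(M/\mathit{rad}^{i}(M))\cong \mathit{soc}^{i}(\mathit{D}(M))$) and observing that the result then follows by definition. The only difference is that you prove this radical--socle interchange by induction rather than citing it, which makes the write-up self-contained but is not a genuinely different argument.
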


\begin{proof}
For each positive integer $i\geq 1$ we have $\frac{\mathit{D}(M)}{\mathit{rad}^i(\mathit{D}(M))}\cong \mathit{D}(\mathit{soc}^{i}(M))$ (see for example the proof of the \cite[Proposition V.1.3.]{As}). Then the result follows by the definition.
\end{proof}

\begin{theorem}\label{13}
Let $\Lambda$ be an artin algebra. Then $\Lambda$ is left $n$-Nakayama if and only if $\Lambda$ is right $n$-coNakayama.
\end{theorem}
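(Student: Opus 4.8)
The plan is to transport the two defining conditions of a left $n$-Nakayama algebra across the standard duality $D$, using Proposition \ref{12} as the only real ingredient. First I would recall that $D \colon mod\,\Lambda \to mod\,\Lambda^{op}$ is a duality, so (together with the analogous functor in the other direction) it induces a bijection between the isomorphism classes of indecomposable left $\Lambda$-modules and the isomorphism classes of indecomposable right $\Lambda$-modules. In particular $D$ preserves indecomposability and is essentially surjective, so every indecomposable right $\Lambda$-module is isomorphic to $D(M)$ for some indecomposable left $\Lambda$-module $M$. This is the bookkeeping that lets me move freely between the two module categories.

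For the forward implication, suppose $\Lambda$ is left $n$-Nakayama. Given an arbitrary indecomposable right $\Lambda$-module, write it as $D(M)$ with $M$ an indecomposable left $\Lambda$-module. By hypothesis $M$ is $i$-factor serial for some $1 \le i \le n$, and Proposition \ref{12} shows that $D(M)$ is then $i$-cofactor serial with the \emph{same} $i$; this verifies the first defining condition of a right $n$-coNakayama algebra. For the second condition, choose an indecomposable $n$-factor serial left $\Lambda$-module $N$, which exists since $\Lambda$ is left $n$-Nakayama. Then $D(N)$ is indecomposable and, again by Proposition \ref{12}, $n$-cofactor serial. Hence $\Lambda$ is right $n$-coNakayama. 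The converse is entirely symmetric: starting from an indecomposable left $\Lambda$-module $M$, its image $D(M)$ is an indecomposable right $\Lambda$-module, hence $j$-cofactor serial for some $1 \le j \le n$, so $M$ is $j$-factor serial by Proposition \ref{12}; and an indecomposable $n$-cofactor serial right module is isomorphic to some $D(M)$, forcing $M$ to be $n$-factor serial.

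The crucial point — and essentially the only thing beyond routine translation — is that Proposition \ref{12} preserves the integer invariant \emph{exactly}, matching $i$-factor serial on the left with $i$-cofactor serial on the right. Consequently the maximal degree attained over indecomposable left $\Lambda$-modules equals the maximal degree attained over indecomposable right $\Lambda$-modules, so the same integer $n$ governs both characterizations and no discrepancy in the value of $n$ can arise. I therefore expect no genuine obstacle in the argument; the entire content is carried by Proposition \ref{12}, and the proof reduces to applying the duality $D$ in each of the two directions.
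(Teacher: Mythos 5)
Your proposal is correct and follows exactly the paper's route: the paper's entire proof of Theorem \ref{13} is ``It follows from Proposition \ref{12},'' and your argument simply makes explicit the routine bookkeeping (the duality $D$ preserves indecomposability and gives a bijection on isomorphism classes) that this one-line proof leaves implicit. No discrepancy in approach or substance.
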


\begin{proof}
It follows from Proposition \ref{12}.
\end{proof}

\begin{lemma} Let $\Lambda$ be an artin algebra, $M$ be a non-uniserial right $ \Lambda$-module and $n>1$ be a positive integer. Then the following conditions are equivalent:
\begin{itemize}
\item[$  \left(  \mathrm{a} \right)   $] $M$ is an $n$-cofactor serial right $\Lambda$-module.
\item[$ \left( \mathrm{b}  \right)$] $n-1=\max\{n_i|\text{$M_i$ is an ${n_i}$-cofactor serial maximal submodule of $M$} \}$.
\end{itemize}
\end{lemma}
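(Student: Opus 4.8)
The plan is to deduce this statement from its factor-serial analogue, Theorem \ref{6}, by transporting everything across the standard duality $D$. Recall from Proposition \ref{12} that $D$ interchanges the two notions with the same index: a module is $n$-factor serial exactly when its $D$-image is $n$-cofactor serial. Since $D$ is a duality we may set $N=D(M)$, so that $M\cong D(N)$, and proving the present equivalence for the right module $M$ reduces to proving the corresponding factor-serial statement for $N=D(M)$.

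First I would record the dictionary that $D$ provides. Because $D$ reverses the lattice of submodules, $M$ is uniserial if and only if $D(M)$ is uniserial; hence $M$ non-uniserial forces $N=D(M)$ non-uniserial, so Theorem \ref{6} applies to $N$. Next, a submodule $M_i\subseteq M$ is maximal exactly when $M/M_i$ is simple, and applying $D$ to $0\to M_i\to M\to M/M_i\to 0$ yields $0\to D(M/M_i)\to D(M)\to D(M_i)\to 0$ in which $D(M/M_i)$ is a simple submodule of $N$ and $D(M_i)\cong N/D(M/M_i)$. This sets up a bijection between the maximal submodules $M_i$ of $M$ and the simple submodules $S_i'=D(M/M_i)$ of $N$, under which $D(M_i)\cong N/S_i'$. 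Invoking Proposition \ref{12} once more, $M_i$ is $n_i$-cofactor serial if and only if $D(M_i)\cong N/S_i'$ is $n_i$-factor serial, so the cofactor-serial index of each maximal submodule of $M$ coincides with the factor-serial index of the corresponding quotient of $N$ by a simple submodule.

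With this dictionary in place the conclusion is immediate. By Proposition \ref{12}, condition $(\mathrm{a})$ says that $N=D(M)$ is $n$-factor serial; by Theorem \ref{6} this holds if and only if $\max\{n_i\mid N/S_i'\text{ is }n_i\text{-factor serial}\}=n-1$, the maximum being taken over the simple submodules $S_i'$ of $N$. Translating each term back through the bijection $M_i\leftrightarrow S_i'$ turns the left-hand side into $\max\{n_i\mid M_i\text{ is an }n_i\text{-cofactor serial maximal submodule of }M\}$, which is exactly condition $(\mathrm{b})$.

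The step that needs care—and which I expect to be the only genuine obstacle—is matching the two index sets precisely. In Theorem \ref{6} the maximum is phrased in terms of the isotypic summands of $\mathit{soc}(N)$, whereas in $(\mathrm{b})$ it ranges over all maximal submodules of $M$; I must confirm that these two maxima agree. Since distinct maximal submodules of $M$ correspond to distinct simple submodules of $N$ and Proposition \ref{12} equates the associated indices term by term, the two collections of integers coincide as sets, so their maxima are equal once one reads Theorem \ref{6} in its ``over all simple submodules'' form (which is precisely what its proof establishes, via Lemma \ref{5} and the existence of a simple submodule realizing the value $n-1$). Alternatively, one could bypass $D$ entirely and prove the lemma directly by dualizing the argument of Theorem \ref{6}, replacing $\mathit{rad}$-quotients by $\mathit{soc}$-layers and simple submodules by maximal submodules throughout.
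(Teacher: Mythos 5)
Your proof is correct and follows essentially the same route as the paper: both transport the statement across the standard duality $D$ via Proposition \ref{12}, use the exact sequence $0\to D(M/M_i)\to D(M)\to D(M_i)\to 0$ to match maximal submodules of $M$ with simple submodules of $D(M)$, and then invoke Theorem \ref{6} (with Lemma \ref{5} supplying the upper bound on the indices of the quotients). The only organizational difference is that the paper argues the two implications separately (using a contradiction argument for $(b)\Rightarrow(a)$), whereas you package the whole equivalence as one transport of Theorem \ref{6}, correctly noting that its proof establishes the ``over all simple submodules'' form needed for the dictionary to apply.
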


\begin{proof}$(a)\Longrightarrow (b).$ Assume that $M$ is an $n$-cofactor serial right $\Lambda$-module. By Proposition \ref{12}, $\mathit{D}(M)$ is left $n$-factor serial and by Theorem \ref{6} there exists a simple submodule $S$ of $\mathit{D}(M)$ such that $\frac{\mathit{D}(M)}{S} $ is $(n-1)$-factor serial.
Then we have the following exact sequence of right $\Lambda$-modules
\begin{center}
$0\rightarrow \mathit{D}(\frac{\mathit{D}(M) }{S}) \rightarrow M  \rightarrow \mathit{D}(S) \rightarrow 0$
\end{center}
It is clear that $\mathit{D}(S)$ is a simple right $\Lambda$-module and so $\mathit{D}(\frac{\mathit{D}(M) }{S})$ is a maximal submodule of $M$. By Proposition \ref{12}, $\mathit{D}(\frac{\mathit{D}(M) }{S})$ is $(n-1)$-cofactor serial. Now let $N$ be a proper submodule of $M$, then we have a proper epimorphism  $\mathit{D}(M) \rightarrow \mathit{D}(N)$ of left $\Lambda$-modules. By Lemma \ref{5}, $ \mathit{D}(N) $ is $s$-factor serial for some $s\leq n-1$. Therefore $N$ is an $s$-cofactor serial right $\Lambda$-module and the result follows.

$(b)\Longrightarrow (a).$ Assume that $n-1=\\\max\{n_i|\text{$M_i$ is an ${n_i}$-cofactor serial maximal submodule of $M$} \}$. Then there is a maximal submodule $M_i$ of $M$ that $M_i$ is an $(n-1)$-cofactor serial right $\Lambda$-module. Thus there is a proper epimorphism $\mathit{D}(M)\longrightarrow \mathit{D}(M_i)$ of left $\Lambda$-modules. By Lemma \ref{5}, $\mathit{D}(M)$ is a $t$-factor serial left $\Lambda$-module for some $t\geq n$. If $t>n$, then by Proposition \ref{12}, $M$ is a $t$-cofactor serial right $\Lambda$-module. Thus by the first part, $t-1=\\\max\{n_i|\text{$M_i$ is an ${n_i}$-cofactor serial maximal submodule of $M$} \}$, which is a contradiction. Then $t=n$ and the result follows.
\end{proof}

\section{right 2-Nakayama algebras}

In this section we describe the representation theory of the right $2$-Nakayama algebras. We give a complete list of their non-isomorphic indecomposable modules. As a consequence we compute all almost split sequences.

\begin{lemma}\label{17}
Let $\Lambda$ be an artin algebra and $M$ be a $2$-factor serial right $\Lambda$-module. Then $\mathit{soc}(M)=S_1\oplus S_{2}$ that $S_1$ and $S_2$ are simple submodules of $M$.
\end{lemma}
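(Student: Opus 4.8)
The plan is to pin down the socle exactly by identifying it with a specific radical power. Write $l = \mathit{l}(M)$; since $M$ is $2$-factor serial it is in particular non-uniserial, so $l \geq 2$. The socle of any module is semisimple, hence automatically a direct sum of simple submodules, so the only real content of the statement is that this sum has length exactly $2$, i.e. $\mathit{l}(\mathit{soc}(M)) = 2$.

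First I would extract from Theorem \ref{2}, applied with $n = 2$, that $\mathit{rad}^i(M)$ is local for each $0 \leq i \leq l-3$ while $\mathit{rad}^{l-2}(M)$ is not local. Since a local module has simple top, each descent in the radical filtration (as long as the current term is local) drops the length by exactly one; an easy induction then gives $\mathit{l}(\mathit{rad}^i(M)) = l - i$ for $0 \leq i \leq l-2$. In particular $\mathit{rad}^{l-2}(M)$ has length $2$. A module of length $2$ is local precisely when it is uniserial (its radical is then a nonzero simple submodule); being non-local, $\mathit{rad}^{l-2}(M)$ must instead have zero radical, i.e. be semisimple, so $\mathit{rad}^{l-2}(M) = S_1 \oplus S_2$ for simple submodules $S_1, S_2$ of $M$.

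It then remains only to check $\mathit{soc}(M) = \mathit{rad}^{l-2}(M)$. The inclusion $\mathit{soc}(M) \subseteq \mathit{rad}^{l-2}(M)$ is exactly the conclusion of Remark \ref{3} with $n = 2$. The reverse inclusion is immediate once we know $\mathit{rad}^{l-2}(M)$ is semisimple, since every semisimple submodule lies in the socle. Combining the two inclusions yields $\mathit{soc}(M) = S_1 \oplus S_2$, as desired. The degenerate case $l = 2$ is covered by the same argument: there $\mathit{rad}^{l-2}(M) = M$, which is non-local of length $2$ by Corollary \ref{4}, so again $\mathit{soc}(M) = M = S_1 \oplus S_2$.

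I do not anticipate a serious obstacle; the only point requiring care is the length bookkeeping in the radical filtration, where one must use locality at each stage to guarantee that the length drops by exactly one, so that $\mathit{rad}^{l-2}(M)$ genuinely lands at length $2$ rather than collapsing sooner. Everything else is a direct appeal to Theorem \ref{2} and Remark \ref{3}.
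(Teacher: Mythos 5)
Your proof is correct, but it takes a genuinely different route from the paper's. The paper first applies Remark \ref{3} to get $\mathit{l}(\mathit{soc}(M))\leq 2$, and then rules out a simple socle by contradiction: if $\mathit{soc}(M)=S$ were simple, Theorem \ref{6} (with $n=2$) would force $\frac{M}{S}$ to be uniserial, and since every composition series of $M$ must begin with its unique simple submodule $S$, the unique composition series of $\frac{M}{S}$ would lift to a unique composition series of $M$, making $M$ uniserial --- contradicting $2$-factor seriality. You instead work down the radical filtration: Theorem \ref{2} gives locality of $\mathit{rad}^i(M)$ for $i\leq l-3$ and non-locality of $\mathit{rad}^{l-2}(M)$, the locality forces the length to drop by exactly one at each step so that $\mathit{l}(\mathit{rad}^{l-2}(M))=2$, and a non-local length-two module must be semisimple; combining Remark \ref{3} ($\mathit{soc}(M)\subseteq \mathit{rad}^{l-2}(M)$) with the trivial reverse inclusion for semisimple submodules pins down $\mathit{soc}(M)=\mathit{rad}^{l-2}(M)=S_1\oplus S_2$. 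Your argument buys more than the statement asks: it identifies the socle exactly as $\mathit{rad}^{l-2}(M)$, which is essentially the content the paper only establishes later in Lemma \ref{18}(b), and it avoids Theorem \ref{6} entirely; the paper's argument is shorter and stays at the level of socle length. Both handle the degenerate case $l=2$ correctly (yours via Corollary \ref{4}), so there is no gap.
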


\begin{proof}
By Remark \ref{3}, $\mathit{l}(\mathit{soc}(M))\leq 2$. We show that $\mathit{soc}(M)$ is not simple.
Assume that $\mathit{soc}(M)=S$ is a simple right $\Lambda$-module. Then by Theorem \ref{6}, $\frac{M}{S}$ is uniserial. Thus $\frac{M}{S}$ has a unique composition series of the form
$$0= \frac{S}{S}\subset \frac{M_{l-1}}{S}\subset ....\subset \frac{M_1}{S}\subset \frac{M}{S}$$
Then $M$ has a unique composition series of the form
$$0\subset S\subset M_{l-1}\subset...\subset M_1\subset M $$ which gives a contradiction. Then $\mathit{l}(\mathit{soc}(M))=2$ and the result follows.
\end{proof}

\begin{lemma}\label{18}
Let $\Lambda$ be an artin algebra and $M$ be a right $\Lambda$-module of Loewy length $t\geq 2$.  Then the following conditions are equivalent:
\begin{itemize}
\item[$(a)$] $M$ is a $2$-factor serial right $\Lambda$-module.
\item[$(b)$] For every $0\leqslant i\leqslant t-2$, $\textit{rad}^{i} \left(M \right) $ is local and
$\textit{rad}^{t-1}\left(M \right)=\textit{soc}\left(M \right)=S_1\oplus S_2  $, where $S_1$ and $ S_2$ are simple submodules of $M$.
\item[$(c)$] $\textit{l}\left(M \right)=t+1 $ and $\textit{soc}\left(M \right)=\textit{rad}^{t-1}\left(M \right)=S_1\oplus S_2 $, where $S_1$ and $S_2$ are simple submodules of $M$.
\end{itemize}
\end{lemma}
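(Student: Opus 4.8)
The plan is to prove the cycle $(a)\Rightarrow(b)\Rightarrow(c)\Rightarrow(a)$, writing $l=\mathit{l}(M)$ throughout and using repeatedly the standard identity $\mathit{rad}^{i+1}(M)=\mathit{rad}(\mathit{rad}^i(M))$, so that $\mathit{rad}^i(M)$ is local precisely when its top, the Loewy layer $\mathit{rad}^i(M)/\mathit{rad}^{i+1}(M)$, is simple, i.e. has length $1$. I would also record at the outset that, since the Loewy length is $t$, Nakayama's lemma forces $\mathit{rad}^i(M)\neq \mathit{rad}^{i+1}(M)$ for every $0\leq i\leq t-1$, so each of the $t$ Loewy layers $\mathit{rad}^i(M)/\mathit{rad}^{i+1}(M)$ with $0\leq i\leq t-1$ is a nonzero semisimple module.

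For $(a)\Rightarrow(b)$, I would feed $M$, which is non-uniserial since it is $2$-factor serial, into Theorem \ref{2} with $n=2$: this gives that $\mathit{rad}^i(M)$ is local for $0\leq i\leq l-3$ and that $\mathit{rad}^{l-2}(M)$ is not local. Locality of the layers up to $i=l-3$ means each strips off exactly one composition factor, so $\mathit{l}(\mathit{rad}^{l-2}(M))=2$; a module of length $2$ that is not local cannot have simple top, hence must be semisimple, so $\mathit{rad}^{l-2}(M)=S_1\oplus S_2$ and therefore $\mathit{rad}^{l-1}(M)=0$. This pins down the Loewy length as $t=l-1$, whence $l-3=t-2$, so the locality range $0\leq i\leq l-3$ is exactly $0\leq i\leq t-2$. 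Finally, since $\mathit{rad}^{t-1}(M)=S_1\oplus S_2$ is semisimple it lies inside $\mathit{soc}(M)$, while Remark \ref{3} gives $\mathit{soc}(M)\subseteq\mathit{rad}^{l-2}(M)=\mathit{rad}^{t-1}(M)$; combining these, $\mathit{soc}(M)=\mathit{rad}^{t-1}(M)=S_1\oplus S_2$, the simplicity of the two summands being Lemma \ref{17}, which is $(b)$.

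The remaining two implications are length bookkeeping on the Loewy series. For $(b)\Rightarrow(c)$, the hypotheses say the first $t-1$ layers (indices $0,\dots,t-2$) are simple and the last layer $\mathit{rad}^{t-1}(M)=S_1\oplus S_2$ has length $2$, so $\mathit{l}(M)=(t-1)\cdot 1+2=t+1$, and the socle statement is already assumed. For $(c)\Rightarrow(a)$, I note first that $\mathit{soc}(M)=S_1\oplus S_2$ is not simple, so $M$ is non-uniserial and Theorem \ref{2} applies. There are exactly $t$ nonzero semisimple Loewy layers whose lengths sum to $\mathit{l}(M)=t+1$; since the last one, $\mathit{rad}^{t-1}(M)=S_1\oplus S_2$, already has length $2$, every earlier layer must have length $1$, i.e. $\mathit{rad}^i(M)$ is local for $0\leq i\leq t-2=l-3$, while $\mathit{rad}^{t-1}(M)=\mathit{rad}^{l-2}(M)$ is not local. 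By Theorem \ref{2} this is exactly the assertion that $M$ is $2$-factor serial.

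The only genuinely delicate point is the step in $(a)\Rightarrow(b)$ identifying $\mathit{rad}^{l-2}(M)$: one must argue that a length-$2$ non-local module is forced to be semisimple, hence equal to $S_1\oplus S_2$ and killed by a further radical, which is what synchronizes the length $l$ with the Loewy length $t$ through $t=l-1$. Everything else is the bookkeeping that each local layer removes exactly one composition factor, together with Nakayama's lemma guaranteeing that no Loewy layer vanishes prematurely.
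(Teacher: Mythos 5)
Your proof is correct, and while it establishes the same cycle $(a)\Rightarrow(b)\Rightarrow(c)\Rightarrow(a)$ as the paper, the tools are genuinely different. You route both nontrivial implications through Theorem \ref{2}: for $(a)\Rightarrow(b)$ you read off locality of $\mathit{rad}^i(M)$ for $i\leq l-3$ and non-locality of $\mathit{rad}^{l-2}(M)$ from that theorem, and the real content of your argument is the synchronization $t=l-1$, obtained from the observation that a non-local module of length $2$ must be semisimple (hence $\mathit{rad}^{l-1}(M)=0$), after which Remark \ref{3} pins down the socle. The paper instead proves $(a)\Rightarrow(b)$ from scratch: if some $\mathit{rad}^i(M)$ with $i\leq t-2$ were not local, two maximal submodules $M_1,M_2$ of it would produce two distinct composition series of $M$ passing through $N=M_1\cap M_2$, so $M/N$ would be non-uniserial, contradicting Remark \ref{R6}; it then quotes Lemma \ref{17} for $\mathit{soc}(M)=S_1\oplus S_2$ and identifies $\mathit{soc}(M)$ with $\mathit{rad}^{t-1}(M)$ by a radical-power argument. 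For $(c)\Rightarrow(a)$ the paper observes that $M/S_1$ and $M/S_2$ are uniserial and invokes Theorem \ref{6}, whereas you redo the Loewy-layer bookkeeping and apply Theorem \ref{2} once more. What your approach buys: uniformity (a single characterization does all the work) and an explicit reconciliation of length with Loewy length ($l=t+1$), a point the paper leaves implicit — indeed the paper's unproved claim that $M/S_1$ and $M/S_2$ are uniserial is exactly your layer computation. What the paper's approach buys: its $(a)\Rightarrow(b)$ argument works with the Loewy length $t$ from the outset, needing no comparison of $l$ with $t$, and its $(c)\Rightarrow(a)$ is a one-line appeal to Theorem \ref{6}. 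One minor remark: your closing appeal to Lemma \ref{17} in $(a)\Rightarrow(b)$ is redundant, since the simplicity of the two summands already follows from your semisimplicity observation.
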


\begin{proof} $(a)\Longrightarrow (b).$ First we show that for every $0\leqslant i\leqslant t-2$,
$\textit{rad}^i \left(M\right)  $  is local. Assume that there exists $0\leqslant i\leqslant t-2$, such that $\textit{rad}^i \left(M\right)  $ is not local. Let $M_1$ and $M_2$ be two maximal submodules of $rad^i(M)$. Let $N:=M_1\bigcap M_2 $. Then $N$ is a maximal submodule of $M_i$ for $i=1, 2$.
This implies that there exist the following composition series for $M$
$$0\subset \cdots\subset N \subset M_1\subset \textit{rad}^i \left(M\right) \subset\cdots \subset rad(M)\subset M$$
$$0\subset \cdots\subset N \subset M_2\subset \textit{rad}^i \left(M\right) \subset\cdots\subset rad(M)\subset M$$
Since $\frac{M}{N}$ has two different composition series, $\frac{M}{N}$ is not uniserial. On the other hand by Remark \ref{R6}, $\frac{M}{N}$ is uniserial which is a contradiction. Then for each $0\leq i\leq t-2$, $rad^i(M)$ is local. By Lemma \ref{17}, $ \textit{soc}(M)=S_1\oplus S_2$. Now we show that $\textit{rad}^{t-1}\left(M \right)=\textit{soc}(M)$. It is clear that
$\textit{rad}^{t-1}\left(M \right)\subseteq \textit{soc}(M)$. If $\textit{rad}^{t-1}\left(M \right)\neq \textit{soc}(M)$, then $\textit{soc}(M)=\textit{rad}^{i}\left(M \right)$ for some $0\leq i\leq t-2$, which is a contradiction. Then $\mathit{rad}^{t-1}(M)=\mathit{soc}(M)$ and the result follows.

$(b)\Longrightarrow (c).$ It is obvious.

$(c) \Longrightarrow (a).$ Since $\frac{M}{S_1}$ and $\frac{M}{S_2}$ are uniserial, the result follows by Theorem \ref{6}.
\end{proof}

\begin{lemma}
Let $\Lambda$ be an artin algebra and $M$ be a $2$-factor serial right $\Lambda$-module of length $l$. Then $M$ is either indecomposable or $M=S_1\oplus S_2$ where $S_1$ and $S_2$ are simple right $\Lambda$-modules.
\end{lemma}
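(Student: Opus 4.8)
The plan is to split into the two alternatives of the dichotomy and dispose of the decomposable case using the earlier corollaries. If $M$ is indecomposable there is nothing to prove, so I would assume $M$ is decomposable and aim to show that $l=2$, after which the structure of $M$ is forced.

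The first step is to apply Corollary \ref{15}: since $M$ is not indecomposable, $M$ is $l$-factor serial. The second, and only substantive, step is to observe that the factor-serial index of a module is well-defined, so that being simultaneously $2$-factor serial and $l$-factor serial forces $2=l$. I would justify this directly from the definition: for a non-uniserial module of length $l$, whenever $j\leq j'$ the quotient $\frac{M}{\mathit{rad}^{j}(M)}$ is a quotient of $\frac{M}{\mathit{rad}^{j'}(M)}$, and quotients of uniserial modules are uniserial (this is exactly the mechanism of Remark \ref{1}). Hence the set of indices $j$ for which $\frac{M}{\mathit{rad}^{j}(M)}$ is uniserial forms an initial segment $\{0,1,\dots,j_0\}$, and the factor-serial index of $M$ is precisely $n=l-j_0$; in particular it is uniquely determined by $M$. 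Applying this to $M$, which is both $2$-factor serial and $l$-factor serial, yields $2=l$.

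Finally, a decomposable module of length $2$ must decompose as a direct sum of two nonzero submodules, each necessarily of length $1$ and hence simple, so $M\cong S_1\oplus S_2$ with $S_1$ and $S_2$ simple. I do not expect a genuine obstacle here; the one point that must be stated carefully is the well-definedness of the factor-serial index, since that is what reconciles the hypothesis ``$M$ is $2$-factor serial'' with the conclusion of Corollary \ref{15} that ``$M$ is $l$-factor serial.'' Everything else is routine.
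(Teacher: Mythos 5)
Your proof is correct and takes essentially the same route as the paper's: assume $M$ is decomposable, invoke Corollary \ref{15} to conclude $M$ is $l$-factor serial, deduce $l=2$, and read off $M=S_1\oplus S_2$ with both summands simple. The only difference is that you make explicit the well-definedness of the factor-serial index (via the quotient mechanism of Remark \ref{1}), a point the paper leaves implicit in the step ``then $l=2$'' --- a worthwhile clarification, but not a different argument.
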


\begin{proof}
Assume that $M$ is not indecomposable. By Corollary \ref{15}, $M$ is $l$-factor serial. Then $l=2$ and the result follows.
\end{proof}

An artin algebra $\Lambda$ is called left serial if every indecomposable injective right $\Lambda$-module is uniserial.

\begin{proposition}\label{19}
Let $\Lambda$ be an artin algebra. If $\Lambda $ is a right $2$-Nakayama algebra, then $\Lambda $ is left serial.
\end{proposition}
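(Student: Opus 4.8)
The plan is to argue directly from the definition, showing that every indecomposable injective right $\Lambda$-module is uniserial. Let $I$ be an indecomposable injective right $\Lambda$-module. Since $\Lambda$ is right $2$-Nakayama, $I$ is $i$-factor serial for some $i\in\{1,2\}$. If $i=1$ then $I$ is uniserial and there is nothing more to prove, so the entire argument reduces to ruling out the possibility that $I$ is $2$-factor serial.

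The one external fact I would invoke is the standard structure of indecomposable injectives over an artin algebra: every indecomposable injective right $\Lambda$-module is the injective envelope $E(S)$ of a simple module $S$, and consequently $\mathit{soc}(I)\cong S$ is simple. This is precisely where the injectivity hypothesis enters, and it is the only input needed beyond the results already established in the excerpt.

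Now suppose toward a contradiction that $I$ is $2$-factor serial. By Lemma \ref{17}, $\mathit{soc}(I)=S_1\oplus S_2$ with $S_1$ and $S_2$ simple submodules of $I$, so $\mathit{l}(\mathit{soc}(I))=2$. This contradicts the simplicity of $\mathit{soc}(I)$ obtained above. Hence $I$ cannot be $2$-factor serial, and therefore $I$ is uniserial. Since $I$ was an arbitrary indecomposable injective right $\Lambda$-module, $\Lambda$ is left serial.

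I do not expect any genuine obstacle here: once Lemma \ref{17} pins the socle length of a $2$-factor serial module at exactly $2$ and the indecomposable injectives are recognised as having simple socle, the two facts are immediately incompatible. The only point requiring a little care is to make sure the socle statement of Lemma \ref{17} is applied to $I$ itself rather than to some subquotient, which is legitimate precisely because $I$ is the module assumed to be $2$-factor serial.
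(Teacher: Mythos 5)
Your proof is correct and follows exactly the paper's own argument: the paper also observes that an indecomposable injective has simple socle and that Lemma \ref{17} forces a $2$-factor serial module to have socle of length two, so $I$ must be uniserial. Your version merely spells out the contradiction more explicitly than the paper's one-line proof.
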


\begin{proof}
Let $I$ be an indecomposable injective right $\Lambda$-module. Since $\mathit{soc}(I)$ is simple, by Lemma \ref{17}, $I$ is uniserial and the result follows.
\end{proof}

\begin{proposition}\label{PP}
Let $\Lambda$ be a right $2$-Nakayama artin algebra and $M$ be an indecomposable $2$-factor serial right $\Lambda$-module. Then $M$ is projective, $\mathit{l}(M)=3$, $\mathit{ll}(M)=2$ and $\mathit{rad}(M)=\mathit{soc}(M)$.
\end{proposition}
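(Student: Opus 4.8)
The plan is to prove the four assertions in sequence: projectivity of $M$, then $\mathit{ll}(M)=2$, from which $\mathit{l}(M)=3$ and $\mathit{rad}(M)=\mathit{soc}(M)$ follow at once from Lemma \ref{18}.

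First I would show that $M$ is projective; the argument in fact applies to any indecomposable $2$-factor serial module. Since $M$ is indecomposable and $2$-factor serial it is non-simple and local (Lemma \ref{18}), so its projective cover $\pi\colon P\to M$ is an epimorphism with $P$ indecomposable. Because $\Lambda$ is right $2$-Nakayama, $P$ is $1$- or $2$-factor serial; it cannot be uniserial, for every quotient of a uniserial module is uniserial whereas $M$ is not. Hence $P$ is $2$-factor serial. If $\pi$ were a proper epimorphism, Lemma \ref{5} (applied with $s=2$) would force $P$ to be $t$-factor serial for some $t>2$, a contradiction. Thus $\pi$ is an isomorphism and $M$ is projective.

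Next I would establish $\mathit{ll}(M)=2$ by contradiction, assuming $t:=\mathit{ll}(M)\geq 3$. By Lemma \ref{9} the submodule $\mathit{rad}^{\,t-3}(M)$ is again $2$-factor serial, and by Lemma \ref{18} it is local of Loewy length $3$; by the first step it is therefore an indecomposable projective, say $P(e)$, whose radical layers I write as $S_e/S_f/(S_1\oplus S_2)$, so that $\mathit{soc}(P(e))=S_1\oplus S_2$ (Lemma \ref{17}). Its radical $P(f):=\mathit{rad}(P(e))$ is likewise indecomposable, $2$-factor serial of Loewy length $2$, hence projective, with top $S_f$ and $\mathit{rad}(P(f))=S_1\oplus S_2$. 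Applying $\mathrm{Hom}(-,P(e))$ to the short exact sequence $0\to S_1\oplus S_2\to P(f)\to S_f\to 0$ and using $\mathrm{Ext}^1(P(f),P(e))=0$, I would identify $\mathrm{Ext}^1(S_f,P(e))$ with the cokernel of the restriction map $\mathrm{Hom}(P(f),P(e))\to\mathrm{Hom}(S_1\oplus S_2,P(e))$. A short computation shows this cokernel is nonzero: every nonzero map $P(f)\to P(e)$ has image $\mathit{rad}(P(e))$, so the source is one-dimensional, and its image in $\mathrm{Hom}(S_1\oplus S_2,P(e))\cong\mathrm{End}(S_1\oplus S_2)$ is the single socle inclusion, which does not exhaust the target. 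Choosing a nonsplit extension $0\to P(e)\to V\to S_f\to 0$, I would observe that $V$ is indecomposable (any splitting would decompose the indecomposable $P(e)$) and that $\mathit{l}(V)=5$; moreover $V$ is not local, since a local module with top $S_f$ is a quotient of $P(f)$ and hence has length at most $3$. By Corollary \ref{4} the module $V$ is $5$-factor serial, contradicting that $\Lambda$ is right $2$-Nakayama. Hence $t=2$, and Lemma \ref{18} then yields $\mathit{l}(M)=\mathit{ll}(M)+1=3$ and $\mathit{rad}(M)=\mathit{rad}^{\,\mathit{ll}(M)-1}(M)=\mathit{soc}(M)$.

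The first step is routine; the crux of the argument is the second. The delicate points there are the dimension count proving $\mathrm{Ext}^1(S_f,P(e))\neq 0$ and the verification that the resulting extension $V$ is simultaneously indecomposable and non-local — together these manufacture, out of a hypothetical $2$-factor serial module of Loewy length $\geq 3$, an indecomposable module far too complicated to live over a right $2$-Nakayama algebra.
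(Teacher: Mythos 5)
Your first step (projectivity) is exactly the paper's argument: locality from Lemma \ref{18}, projective cover, and Lemma \ref{5}. Your second step is a genuinely different route: the paper settles the Loewy-length claim in one line by noting that, by Lemma \ref{18}, every indecomposable module over a right $2$-Nakayama algebra is local, so $\Lambda$ is of right local type, and then quoting Proposition 1.4 of \cite{Asa} to conclude that $\mathit{rad}(M)$ is decomposable. Your self-contained $\mathrm{Ext}$-argument can be made to work, but as written it has a genuine gap at its crucial point: the indecomposability of $V$. The justification ``any splitting would decompose the indecomposable $P(e)$'' conflates a direct decomposition of $V$ with a splitting of the exact sequence; a non-split extension of a simple by an indecomposable module can perfectly well have a decomposable middle term. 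Concretely, over $k[x,y]/(x,y)^2$ the sequence $0\to \Lambda \to \Lambda/(x)\oplus\Lambda/(y)\to S\to 0$, $1\mapsto(\bar 1,\bar 1)$, is non-split (by Krull--Schmidt the middle term is not $\Lambda\oplus S$), its left term is indecomposable, local and projective, and yet its middle term decomposes. So the principle you invoke is false, and indecomposability of $V$ must be extracted from the right $2$-Nakayama hypothesis. It is true and reparable: if $V=A\oplus B$ with $A,B\neq 0$, then either some summand, say $B$, lies inside $P(e)$, in which case the modular law gives $P(e)=(P(e)\cap A)\oplus B$, indecomposability of $P(e)$ forces $P(e)=B$, hence $\mathit{l}(A)=1$ and $A\cap P(e)=0$, so $A$ maps isomorphically onto $S_f$ and the sequence splits; or neither summand lies in $P(e)$, in which case either one of $A\cap P(e)$, $B\cap P(e)$ is zero (again producing a splitting) or $(A\cap P(e))\oplus(B\cap P(e))$ is a decomposable submodule of $P(e)$ of length $(\mathit{l}(A)-1)+(\mathit{l}(B)-1)=3>2$, contradicting Theorem \ref{2}$(c)$. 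Some argument of this kind is indispensable.

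There is a second, smaller gap in the computation of $\mathrm{Ext}^1(S_f,P(e))$: the claim that every nonzero map $P(f)\to P(e)$ has image $\mathit{rad}(P(e))$, hence that $\mathrm{Hom}(P(f),P(e))$ is one-dimensional, fails when $S_f\cong S_1$ or $S_f\cong S_2$ (nothing in the hypotheses excludes this), since then $P(f)\twoheadrightarrow S_f\hookrightarrow \mathit{soc}(P(e))$ is a nonzero map with simple image. Your conclusion nevertheless survives: any such map kills $\mathit{rad}(P(f))=S_1\oplus S_2$ and so contributes nothing to the image of the restriction map, while a map with image $\mathit{rad}(P(e))$ is an automorphism of $P(f)$ followed by the inclusion. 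Moreover a non-invertible endomorphism of $P(f)$ has simple or zero image and therefore also kills the socle, so the image of the restriction map equals the image of the ring homomorphism $\mathrm{End}(P(f))\to \mathrm{End}(S_1\oplus S_2)$, $\psi\mapsto \psi\vert_{\mathit{soc}(P(f))}$; this image is a local ring (a quotient of the local ring $\mathrm{End}(P(f))$), hence a proper subring of $\mathrm{End}(S_1\oplus S_2)$, which is never local. Thus $\mathrm{Ext}^1(S_f,P(e))\neq 0$, and in this form the count is valid over an arbitrary artin algebra, where dimension arguments over an algebraically closed field are not available. With these two repairs your proof is correct and gives an alternative to the paper's citation of \cite{Asa}.
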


\begin{proof} By Lemma \ref{18}, $M$ is local. Then there exists an indecomposable projective right $\Lambda$-module $P$ that $\frac{P}{N}\cong M$, for some submodule $N$ of $P$. If $N\neq 0$, then by Lemma \ref{5}, $P$ is $t$-factor serial for some $t\geq 3$ which gives a contradiction. Then $M$ is a projective right $\Lambda$-module and $\Lambda$ is of local type. Thus by Proposition $1.4$ of \cite{Asa}, $\mathit{rad}(M)$ is decomposable and so $\mathit{rad}(M)=\mathit{soc}(M)=S_1\oplus S_2$, where $S_i$ is a simple submodule of $M$ for each $i=1, 2$. Therefore $\mathit{l}(M)=3$ and $\mathit{ll}(M)=2$.
\end{proof}

\begin{proposition}\label{P20}
Let $\Lambda$ be a right $2$-Nakayama artin algebra and $M$ be an indecomposable right $\Lambda$-module. Then there exists an indecomposable projective right $\Lambda$-module $P$ and a submodule $N$ of $P$ such that $M\cong \frac{P}{N}$. More precisely, if $P$ is an indecomposable projective uniserial module, then $M\cong \frac{P}{\textit{rad}^i (P)}$ for some $1\leq i\leq\textit{ll}(P)$ and if $P$ is an indecomposable projective $2$-factor serial module that $\mathit{rad}(P)=\textit{soc}(P)=S_1\bigoplus S_2$, then $M$ is either isomorphic to $P$ or isomorphic to $\frac{P}{\textit{rad}(P)}$ or isomorphic to $\frac{P}{S_i}$ for some $1\leq i\leq 2$.
\end{proposition}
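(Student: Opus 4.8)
The plan is to split the argument according to the factor-serial type of the indecomposable module $M$, using that over a right $2$-Nakayama algebra every indecomposable right $\Lambda$-module is either uniserial or $2$-factor serial. First I would dispose of the case where $M$ is $2$-factor serial: by Proposition \ref{PP} such an $M$ is projective, so taking $P=M$ and $N=0$ realizes $M\cong P/N$ with $P$ an indecomposable projective module. Since this $P$ is itself $2$-factor serial, this is exactly the alternative ``$M\cong P$'' in the statement. From now on I may therefore assume $M$ is uniserial.

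A uniserial module is local, so $top(M)$ is simple and the projective cover $p\colon P\to M$ has $top(P)\cong top(M)$ simple; hence $P$ is an indecomposable projective module and $M\cong P/N$ with $N=\ker p$. This already establishes the first assertion. Since $P$ is itself an indecomposable right $\Lambda$-module, it is again uniserial or $2$-factor serial, and I would treat these two possibilities separately. If $P$ is uniserial, its submodules form the single chain $P=\mathit{rad}^0(P)\supsetneq \mathit{rad}^1(P)\supsetneq\cdots\supsetneq \mathit{rad}^{\mathit{ll}(P)}(P)=0$, so $N=\mathit{rad}^i(P)$ for a unique $i$; as $M\neq 0$ we get $1\le i\le \mathit{ll}(P)$ and $M\cong P/\mathit{rad}^i(P)$, as required.

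If instead $P$ is $2$-factor serial, Proposition \ref{PP} gives $\mathit{l}(P)=3$, $\mathit{ll}(P)=2$ and $\mathit{rad}(P)=\mathit{soc}(P)=S_1\oplus S_2$. Because $top(P)$ is simple, $\mathit{rad}(P)$ is the unique maximal submodule, so every proper submodule of $P$ lies inside $\mathit{rad}(P)$. The step I expect to be the main obstacle is showing that $S_1\not\cong S_2$; granting this, the submodules of $\mathit{rad}(P)=S_1\oplus S_2$ are precisely $0,\,S_1,\,S_2,\,\mathit{rad}(P)$, hence the proper nonzero submodules of $P$ are exactly $S_1,\,S_2,\,\mathit{rad}(P)$, with corresponding (uniserial) quotients $P/S_1,\,P/S_2,\,P/\mathit{rad}(P)$. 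Since $M$ is uniserial and $M\not\cong P$ (as $P$ is not uniserial), we have $N\neq 0$, so $N\in\{S_1,S_2,\mathit{rad}(P)\}$ and $M$ is one of the three listed modules, finishing the proof. To rule out $S_1\cong S_2$ I would argue by representation-finiteness: if $S_1\cong S_2\cong S$, then $\mathit{rad}(P)\cong S\oplus S$ carries a $\mathbb{P}^1$-family of pairwise distinct simple submodules $\ell$, and the length-two uniserial quotients $P/\ell$ are pairwise non-isomorphic indecomposables (they are the $(1,1)$-dimensional Kronecker representations for the two parallel arrows joining the vertices of $top(P)$ and $S$). This produces infinitely many isomorphism classes of indecomposable right $\Lambda$-modules, contradicting the representation-finiteness of $\Lambda$, which holds by Theorem \ref{10} since $\Lambda$ is right $2$-Nakayama.
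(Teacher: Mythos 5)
Your decomposition of the problem is essentially the paper's: Proposition \ref{PP} disposes of the $2$-factor serial case ($M$ is projective, so $M\cong P$ with $N=0$), the uniserial case goes through the projective cover of the local module $M$, and the two shapes of $P$ are treated separately, reading off the chain of radical powers when $P$ is uniserial. The one point where you add something beyond the paper is also where the attempt breaks: to control the kernels $N\subseteq\mathit{rad}(P)=S_1\oplus S_2$ you claim $S_1\not\cong S_2$, arguing that otherwise the $\mathbb{P}^1$-family of simple submodules $\ell\subseteq S\oplus S$ would give pairwise non-isomorphic quotients $P/\ell$, contradicting representation-finiteness. The proposition, however, is stated for artin algebras, and in that generality both the claim and the argument are false. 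Consider the hereditary artin algebra $\Lambda=\left(\begin{smallmatrix}\mathbb{F}_{q^2}&\mathbb{F}_{q^2}\\ 0&\mathbb{F}_q\end{smallmatrix}\right)$ of type $B_2$. It is representation-finite with exactly four indecomposables: $S_1$, $S_2$, the uniserial injective $I_2$ of length two, and $P_1=e_1\Lambda$, which is $2$-factor serial with $\mathit{rad}(P_1)=\mathit{soc}(P_1)=(0,\mathbb{F}_{q^2})\cong S_2\oplus S_2$. Hence $\Lambda$ is right $2$-Nakayama and the two socle summands of its $2$-factor serial projective are isomorphic.

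In this example your argument collapses for two separate reasons. First, the simple submodules of $\mathit{soc}(P_1)$ are parametrized by $\mathbb{P}^1(\mathbb{F}_q)$, which is finite, so no contradiction with representation-finiteness can arise from counting them. Second, and more fundamentally, the quotients $P_1/\ell$ are not pairwise non-isomorphic: $\mathrm{End}(P_1)=e_1\Lambda e_1=\mathbb{F}_{q^2}$ acts on $\mathit{soc}(P_1)=\mathbb{F}_{q^2}$ by multiplication, so $\mathrm{Aut}(P_1)$ permutes the $\mathbb{F}_q$-lines transitively and all $q+1$ quotients are isomorphic to $I_2$; this transitivity is also exactly why the conclusion of Proposition \ref{P20} still holds for this $\Lambda$. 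Your argument is correct if one restricts to finite dimensional algebras over an algebraically closed field $K$: there $\mathrm{End}(P)/\mathit{rad}\,\mathrm{End}(P)=K$, every non-invertible endomorphism of $P$ annihilates $\mathit{soc}(P)$ because $\mathit{rad}^2(P)=0$, so automorphisms of $P$ act on the lines by scalars, and since any isomorphism $P/\ell\cong P/\ell'$ lifts through the projective covers to an automorphism of $P$ carrying $\ell$ to $\ell'$, distinct lines give distinct indecomposables. In that restricted setting your proof is complete, and in fact more careful than the paper's own proof, which simply cites Lemma \ref{18} and Proposition \ref{PP} and never addresses simple submodules of $S_1\oplus S_2$ other than $S_1$ and $S_2$. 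But as a proof of the statement for artin algebras, the step ``$S_1\cong S_2$ implies representation-infinite'' is a genuine error; what the statement actually needs is that $\mathrm{Aut}(P)$ acts transitively enough on the simple submodules of $\mathit{soc}(P)$ that every $P/N$ with $N$ simple is isomorphic to some $P/S_i$, and establishing this in artin-algebra generality requires input beyond what either you or the paper supplies.
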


\begin{proof} Let $M$ be an indecomposable non-projective right $\Lambda$-module. By Proposition \ref{PP}, $M$ is uniserial. Then there exists an indecomposable projective right $\Lambda$-module $P$ that $M\cong \frac{P}{N}$ for some submodule $N$ of $P$. If $P$ is uniserial, then $M\cong \frac{P}{\mathit{rad}^{i}(P)}$ for some $1\leq i < \mathit{ll}(P)$. Now assume that $P$ is an indecomposable projective $2$-factor serial right $\Lambda$-module. Then by using Lemma \ref{18} and Proposition \ref{PP}, $M$ is either isomorphic to $\frac{P}{\textit{rad} (P)}$ or isomorphic to $\frac{P}{S_i}$ for some $1\leq i\leq 2$.
\end{proof}

\begin{remark} \begin{itemize} \item[$(1)$] Let $\Lambda$ be a right $2$-Nakayama artin algebra and $M$ be a $2$-factor serial indecomposable right $\Lambda$-module. By Proposition \ref{PP},  $ll(M)=2$, $l(M)=3$ and $\mathit{soc}(M)=S_1\oplus S_2$ where $S_1$ and $S_2$ are simple right $\Lambda$-modules. $\mathit{D}(M)$ has maximal submodules which are isomorphic to $\mathit{D}(\frac{M}{S_1})$ and $\mathit{D}(\frac{M}{S_2}) $. By Proposition \ref{4}, $\mathit{D}(M)$ is $3$-factor serial left $\Lambda$-module and $\Lambda$ is a left $3$-Nakayama algebra. Then there does not exist a $2$-Nakayama artin algebra.
\item[$(2)$] Let $\Lambda$ be a right $2$-Nakayama artin algebra. By Proposition \ref{PP}, there exists an indecomposable projective $2$-factor serial right $\Lambda$-module $P$. By Lemma $\ref{17}$, $\mathit{soc}(P)$ is not simple and so $P$ is not injective. Then there does not exist a right $2$-Nakayama self-injective artin algebra.
\end{itemize}
\end{remark}

\begin{theorem}
Let $\Lambda$ be an artin algebra which is not Nakayama. Then $\Lambda$ is right $2$-Nakayama if and only if every indecomposable non-projective right $\Lambda$-module is uniserial.
\end{theorem}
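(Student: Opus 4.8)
The plan is to prove the two implications separately, with the forward direction being a direct consequence of earlier results and the backward direction carrying the real content. For the forward implication, suppose $\Lambda$ is right $2$-Nakayama and let $M$ be an indecomposable non-projective right $\Lambda$-module. By the definition of right $2$-Nakayama, $M$ is either $1$-factor serial (uniserial) or $2$-factor serial. The second case is ruled out immediately by Proposition \ref{PP}, which says that over a right $2$-Nakayama algebra every indecomposable $2$-factor serial module is projective. Hence $M$ is uniserial and this direction is finished.

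For the backward implication I would assume that every indecomposable non-projective right $\Lambda$-module is uniserial and that $\Lambda$ is not Nakayama, and the crux of the argument is to show that \emph{no indecomposable projective can be $n$-factor serial with $n\geq 3$}. So suppose $P$ is indecomposable projective and $n$-factor serial with $n\geq 3$, and set $l=\mathit{l}(P)$. Since $P$ is local, Corollary \ref{4} shows $P$ is not $l$-factor serial, so $n<l$. By Theorem \ref{6} there is a simple submodule $S\subseteq P$ with $P/S$ being $(n-1)$-factor serial. The first step is to check that $P/S$ is indecomposable: if it were decomposable, Corollary \ref{15} would force it to be $(l-1)$-factor serial, whence $n-1=l-1$, i.e.\ $n=l$, contradicting $n<l$. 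Thus $P/S$ is indecomposable, and since $n-1\geq 2$ it is non-uniserial; by hypothesis $P/S$ is therefore projective.

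The contradiction now comes from a splitting argument: the canonical surjection $P\to P/S$ has projective target, so it splits, giving $P\cong (P/S)\oplus S$ with both summands nonzero, which contradicts the indecomposability of $P$. This establishes the claim. To conclude, observe that every indecomposable module is $m$-factor serial for some $m$: the non-projective ones have $m=1$ by hypothesis, and the projective ones have $m\leq 2$ by the claim just proved, so every indecomposable is $1$- or $2$-factor serial. Since $\Lambda$ is not Nakayama, some indecomposable is non-uniserial; by hypothesis it is projective, and by the claim it is then exactly $2$-factor serial. Both conditions in the definition of right $2$-Nakayama are thus satisfied.

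I expect the decisive and only delicate point to be the middle step: controlling the proper quotient $P/S$ well enough to apply the hypothesis. The key is that one must first guarantee $P/S$ is both indecomposable (via Corollaries \ref{15} and \ref{4}, which pin down the length constraint $n<l$) and non-uniserial (forced by $n-1\geq 2$), so that ``non-uniserial $\Rightarrow$ projective'' actually applies to it. Once that is arranged, the observation that a surjection onto an indecomposable projective must split is what makes $n\geq 3$ untenable, and the rest is bookkeeping against the definitions.
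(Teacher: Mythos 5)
Your proof is correct and follows essentially the same route as the paper: the forward direction is the paper's verbatim appeal to Proposition \ref{PP}, and for the converse both you and the paper take a non-uniserial indecomposable projective $P$, pass to a proper quotient whose factor-serial index is one less, and derive the contradiction that a non-uniserial such quotient would be projective by hypothesis, splitting the surjection from the indecomposable $P$. The only cosmetic difference is that the paper produces the quotient via the epimorphism chain of Corollary \ref{7} and gets its indecomposability from the quotient of a local module being local, whereas you invoke Theorem \ref{6} directly and settle indecomposability through Corollary \ref{15} and a length count against Corollary \ref{4}.
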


\begin{proof} Assume that $\Lambda$ is right $2$-Nakayama. Then by Proposition \ref{PP}, every indecomposable non-projective right $\Lambda$-module is uniserial.
Now suppose that every indecomposable non-projective right $\Lambda$-module is uniserial. Let $P$ be an indecomposable projective right $\Lambda$-module which is not uniserial. We show that $P$ is a $2$-factor serial right $\Lambda$-module. Assume that $P$ is a $t$-factor serial right $\Lambda$-module. Then by Corollary \ref{7}, there is a sequence of proper epimorphisms and right $\Lambda$-modules such
 \begin{center}
$ M_0\buildrel {f_0} \over \longrightarrow M_1 \buildrel {f_{1}} \over \longrightarrow \cdots \buildrel {f_{t-3}} \over \longrightarrow M_{t-2}\buildrel {f_{t-2}} \over \longrightarrow M_{t-1}$
 \end{center}
that $M_0=P$ and $M_i$ is a $(t-i)$-factor serial right $\Lambda$-module for each $1\leq i \leq t-1$. Since $f_0$ is a proper epimorphism and $top(M_0)$ is simple, $M_1$ is an indecomposable right $\Lambda$-module. If $M_1$ is not uniserial, then $M_1$ is indecomposable projective. Thus $M_1$ is a direct summand of $M_0$ which is a contradiction. Therefore $M_1$ is uniserial and so $P=M_0$ is $2$-factor serial. Then $\Lambda$ is right $2$-Nakayama and the result follows.
\end{proof}

\begin{corollary} Let $\Lambda$ be an artin algebra.
\begin{itemize}
\item[$(i)$] If every indecomposable non-projective right $\Lambda$-module is uniserial, then $\Lambda$ is either Nakayama or right $2$-Nakayama.
\item[$(ii)$] If every indecomposable non-injective right $\Lambda$-module is uniserial, then $\Lambda$ is either Nakayama or left $2$-Nakayama.
\end{itemize}
\end{corollary}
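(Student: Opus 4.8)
The plan is to read off $(i)$ directly from the preceding theorem and then obtain $(ii)$ from $(i)$ by passing to the opposite algebra through the standard duality.

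For $(i)$, suppose every indecomposable non-projective right $\Lambda$-module is uniserial, and split into two cases. If $\Lambda$ is a Nakayama algebra, the first alternative already holds. If $\Lambda$ is not a Nakayama algebra, then $\Lambda$ meets the hypothesis of the preceding theorem, and the right-hand side of that theorem's equivalence — that every indecomposable non-projective right $\Lambda$-module is uniserial — is precisely our standing assumption; hence $\Lambda$ is right $2$-Nakayama, which is the second alternative. This disposes of $(i)$ with no further work.

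For $(ii)$, the idea is to transport the hypothesis to $\Lambda^{op}$ and then apply $(i)$. Let $D\colon mod\,\Lambda \to mod\,\Lambda^{op}$ be the standard duality. I would use three of its standard features: $D$ sends indecomposables to indecomposables; $D(M)$ is projective if and only if $M$ is injective (so $D$ interchanges the indecomposable injectives and the indecomposable projectives); and $D$ preserves uniseriality, since $M$ and $D(M)$ have anti-isomorphic submodule lattices, which are chains simultaneously — equivalently this is the case $n=1$ of Proposition \ref{12}, where $1$-factor serial and $1$-cofactor serial both mean uniserial. Granting these, any indecomposable non-projective right $\Lambda^{op}$-module has the form $D(M)$ for an indecomposable non-injective right $\Lambda$-module $M$; by hypothesis $M$ is uniserial, whence $D(M)$ is uniserial. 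Thus every indecomposable non-projective right $\Lambda^{op}$-module is uniserial, and part $(i)$ applied to $\Lambda^{op}$ shows that $\Lambda^{op}$ is either Nakayama or right $2$-Nakayama.

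It remains to translate this back to $\Lambda$. Because right $\Lambda^{op}$-modules are exactly left $\Lambda$-modules, $\Lambda^{op}$ being right $2$-Nakayama means, by definition, that $\Lambda$ is left $2$-Nakayama; and the Nakayama property is self-dual, since $D$ interchanges indecomposable projectives and injectives while preserving uniseriality, so $\Lambda^{op}$ is Nakayama if and only if $\Lambda$ is. Combining these gives that $\Lambda$ is either Nakayama or left $2$-Nakayama, as required. The only non-formal ingredient, and therefore the step I would state most carefully, is the behaviour of $D$ on uniserial modules together with the projective/injective interchange; once those are in hand, everything reduces to $(i)$ and the definition of left $2$-Nakayama.
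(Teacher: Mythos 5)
Your proof is correct and takes the same route the paper intends: the paper states this corollary without any written proof, since part $(i)$ is an immediate restatement of the preceding theorem and part $(ii)$ is its dual. Your spelled-out argument for $(ii)$ — transporting the hypothesis through the standard duality $D$, applying $(i)$ to $\Lambda^{op}$, and translating back via the identification of right $\Lambda^{op}$-modules with left $\Lambda$-modules and the self-duality of the Nakayama property — is exactly the routine verification the paper leaves implicit.
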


Now we show that how to compute all almost split sequences in the module category of a right $2$-Nakayama artin algebra $\Lambda$.

\begin{theorem} Let $\Lambda$ be a right $2$-Nakayama artin algebra and $M$ be an indecomposable non-projective right $\Lambda$-module.
\begin{itemize}
\item[$(a)$] If $M\cong \frac{P}{\textit{rad}^t(P)}  $ for some $ 1\leq t < \textit{ll}(P) $, where $P$ is an uniserial projective right $\Lambda$-module, then the sequence
\begin{equation*}
0\longrightarrow \frac{\textit{rad}(P)}{\textit{rad}^{t+1}(P)}\buildrel{
\begin{bmatrix}
q\\
i\\
\end{bmatrix}
}\over\longrightarrow \frac{\textit{rad}(P)}{\textit{rad}^t(P)}\oplus \frac{P}{\textit{rad}^{t+1}(P)}\buildrel{[-j,p]}\over \longrightarrow \frac{P}{\textit{rad}^t(P)}\longrightarrow0
\end{equation*}
is an almost split sequence (where $p$ and $q$ are the canonical epimorphisms and $i$ and $j$ are the inclusion homomorphisms).

\item[$(b)$] If  $M\cong \frac{P}{S_t}$ for some $1\leq t\leq 2$, where $P$ is an indecomposable $2$-factor serial projective right $\Lambda$-module that $\mathit{rad}(P)=\textit{soc}(P)=S_1\bigoplus S_{2}$, then the sequence
  \begin{equation*}
      0\rightarrow S_t\buildrel{i_1}\over\longrightarrow P\buildrel{p_1}\over\longrightarrow \frac{P}{S_t}\rightarrow 0
     \end{equation*}
     is an almost split sequence (where $i_1$ is an inclusion homomorphism and $p_1$ is a canonical epimorphism).
 \item[$(c)$] If $M\cong \dfrac{P}{\textit{rad}(P)}$, where $P$ is an indecomposable $2$-factor serial projective right $\Lambda$-module that $\mathit{rad}(M)=\textit{soc}(P)=S_1\bigoplus S_{2}$, then the sequence
  \begin{equation*}
  0\rightarrow P\buildrel{
  \begin{bmatrix}
  \pi_1 \\
  \pi_2\\
  \end{bmatrix}
   }\over\longrightarrow \frac{P}{S_1}\oplus \frac{P}{S_2}\buildrel{[\pi_3,\pi_4]} \over \longrightarrow \frac{P}{\textit{rad}(P)}\rightarrow 0
    \end{equation*}
 is an almost split sequence (where $\pi_1$, $\pi_2$, $\pi_3$ and $\pi_4$ are canonical epimorphisms).
 \end{itemize}
 \end{theorem}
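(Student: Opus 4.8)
The plan is to treat the three cases uniformly: for each I would first check that the displayed sequence is a non-split short exact sequence with indecomposable end terms, and then verify that the right-hand epimorphism is right almost split. By the standard characterization of almost split sequences \cite{ARS}, a non-split short exact sequence with indecomposable end terms whose deflation is right almost split is exactly the almost split sequence ending at $M$, so this suffices. Throughout I would lean on two structural facts already available: the complete list of indecomposables in Proposition \ref{P20}, and the observation that every indecomposable right $\Lambda$-module has simple top (the non-projective ones are uniserial by Proposition \ref{PP}, and the $2$-factor serial projective $P$ is local by Proposition \ref{PP} as well). Consequently, any indecomposable $X$ admitting a surjection onto a local module must be a quotient of the corresponding projective cover, which sharply limits the homomorphisms into $M$ that have to be examined.

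Exactness, non-splitting, and indecomposability of the end terms are routine. In $(a)$ the end terms $\mathit{rad}(P)/\mathit{rad}^{t+1}(P)$ and $P/\mathit{rad}^t(P)$ are uniserial, hence indecomposable, and comparing the middle term $\mathit{rad}(P)/\mathit{rad}^t(P)\oplus P/\mathit{rad}^{t+1}(P)$ (uniserial summands of lengths $t-1$ and $t+1$) with $A\oplus M$ (both summands of length $t$) shows via Krull--Schmidt that the sequence cannot split. In $(b)$ the left term $S_t$ is simple and the middle term $P$ is indecomposable, so the sequence is non-split. In $(c)$ the middle term $P/S_1\oplus P/S_2$ is a sum of two uniserials of length $2$, whereas $P\oplus P/\mathit{rad}(P)$ contains the length-$3$ indecomposable $P$, so Krull--Schmidt again forbids splitting, and the end terms $P$ and $P/\mathit{rad}(P)$ are indecomposable.

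The essential point is the right almost split property, which I would establish by taking an indecomposable $X$ together with a non-isomorphism $h\colon X\to M$ and factoring it through the deflation. For $(b)$, where $M=P/S_t$: if $h$ is not surjective its image lies in $\mathit{soc}(P/S_t)=S_j$ with $j\neq t$, which lifts through $S_j\hookrightarrow P$ and hence through $p_1$; if $h$ is a proper surjection then $X$ has simple top $\mathit{top}(P)$, so $X$ is a quotient of $P$, forcing $X\cong P$ and $h=p_1$ up to an automorphism. For $(c)$, where $M=P/\mathit{rad}(P)$ is simple, any nonzero $h$ is a surjection onto a simple module, whence $X$ is local with top $\mathit{top}(P)$ and thus a quotient of $P$; one then checks directly that each of $P\to M$, $P/S_1\to M$, $P/S_2\to M$ factors through $[\pi_3,\pi_4]$ via the evident component. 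Case $(a)$ is the classical Nakayama computation: a non-isomorphism $h\colon X\to P/\mathit{rad}^t(P)$ from an indecomposable $X$ either has image inside $\mathit{rad}(P)/\mathit{rad}^t(P)$, in which case it factors through $-j$, or it lifts along $p$ to $P/\mathit{rad}^{t+1}(P)$; either way it factors through $[-j,p]$. I expect this last case to be the main obstacle, as it requires tracking how an arbitrary homomorphism meets the radical filtration of a uniserial module; the reductions of the first paragraph, however, confine the analysis to quotients of $P$ and render the factorizations explicit.
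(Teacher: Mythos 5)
Your proposal is correct and follows essentially the same route as the paper's own proof: reduce to showing the right-hand map is right almost split (after noting non-split exactness and indecomposability of the end terms), then in each case split a non-isomorphism $v\colon V\to M$ into the non-surjective case (image lands in the unique maximal submodule, factor through the inclusion component) and the surjective case (use that $V$ is local, hence a quotient of $P$ by the classification in Proposition \ref{P20}, and lift along the canonical projections). The only difference is cosmetic: you spell out the non-splitting via Krull--Schmidt length comparisons, which the paper dismisses as "easy to see."
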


 \begin{proof} Put $g_1=[-j,p]$ and $g_2=[\pi_3,\pi_4]$. It is easy to see that the given sequences are exact, not split and have indecomposable end terms. Thus it suffices to prove that homomorphisms $g_1$, $p_1$ and $ g_2 $ are right almost split.

$(a).$ Let $V$ be an indecomposable right $\Lambda$-module and $v:V\longrightarrow \frac{P}{\textit{rad}^{t}(P)}$ be a non-isomorphism. If $ v$ is not surjective, then $\mathrm{Im}(v)$ is contained in the unique maximal submodule $\textit{rad}(M)=\frac{\textit{rad}(P)}{\textit{rad}^{t}(P)}$ of $M$. Thus the homomorphism\\
 $$\begin{bmatrix}
-v\\
0\\
\end{bmatrix}
:V\longrightarrow \frac{\textit{rad}(P)}{\textit{rad}^t(P)}\oplus \frac{P}{\textit{rad}^{t+1}(P)} $$
satisfies $g_{1}\begin{bmatrix}
-v\\
0\\
\end{bmatrix}
=v$. \\
If $v$ is surjective, then $V\cong \frac{P}{\mathrm{rad}^{s}(P)}$ for some $s>t$ and hence there exists a homomorphism
$v^{'}:V\longrightarrow \frac{\textit{rad}(P)}{\textit{rad}^t(P)}\oplus \frac{P}{\textit{rad}^{t+1}(P)}$
that $g_{1}v^{'}=v$.

$(b).$ Let $V$ be an indecomposable right $\Lambda$-module and $v:V\longrightarrow \frac{P}{S_{t}}$ be a non-isomorphism.
If $v$ is surjective, then $V\cong P$. Hence there exists an isomorphism $\alpha:V\longrightarrow P$ such that $p_1 \alpha=v$. If $v$ is not surjective, then $\mathrm{Im}(v)\cong  \frac{\mathit{rad}(P)}{S_{t}}$ which is a simple submodule of $P$. Therefore there exists a homomorphism $\alpha: V\longrightarrow P$ such that $p_1\alpha=v$.

$(c).$ Let $V$ be an indecomposable right $\Lambda$-module and $v:V\longrightarrow \frac{P}{\textit{rad}(P)}$ be a non-isomorphism. Since $ \frac{P}{\textit{rad}(P)} $ is a simple right $\Lambda$-module, $v$ is an epimorphism and $V$ is either isomorphic to $P$ or isomorphic to $\frac{P}{S_t}$ for some $t=1,2$. Then there exists a homomorphism
 $$v^{'}:V\longrightarrow \frac{ P}{S_1}\oplus \frac{P}{S_2} $$
such that $g_2v^{'}=v$.
\end{proof}

\begin{proposition}\label{28}
Let $\Lambda=\frac{KQ}{I}$ be a basic and connected finite dimensional $K$-algebra. If $\Lambda$ is right $2$-Nakayama, then for any vertex $a\in Q_0$, $|a^+|\leq 2$.
\end{proposition}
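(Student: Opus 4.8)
The plan is to argue by contradiction, exploiting the fact that the out-degree $|a^+|$ of a vertex $a$ records exactly the number of composition factors in the first radical layer of the corresponding indecomposable projective. Concretely, for $\Lambda=KQ/I$ basic the indecomposable projective right module $P(a)=e_a\Lambda$ has simple top $S(a)$, and the standard description of projectives over a quiver algebra gives
$$\mathrm{rad}(P(a))/\mathrm{rad}^2(P(a)) \cong \bigoplus_{\alpha \in a^+} S(t(\alpha)),$$
so that $\dim_K\big(\mathrm{rad}(P(a))/\mathrm{rad}^2(P(a))\big)=|a^+|$. I would begin by recording this identity, since it is the bridge between the combinatorics of $Q$ and the module-theoretic invariants used throughout the paper.

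First I would suppose, toward a contradiction, that some vertex $a$ has $|a^+|\geq 3$, and form the truncated module $M:=P(a)/\mathrm{rad}^2(P(a))$. Since $M$ has simple top $S(a)$ it is local, hence indecomposable, and by construction $\mathrm{rad}^2(M)=0$. Counting composition factors gives $l(M)=1+|a^+|\geq 4$, so in particular $l(M)>2$.

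Next I would invoke Lemma \ref{23}(b): $M$ is indecomposable with $\mathrm{rad}^2(M)=0$, it is local, and $l(M)>2$, so $M$ is $(l(M)-1)$-factor serial, that is, $|a^+|$-factor serial. Because $|a^+|\geq 3$, this exhibits an indecomposable right $\Lambda$-module that is $j$-factor serial for some $j\geq 3$, contradicting the hypothesis that $\Lambda$ is right $2$-Nakayama (every indecomposable being $i$-factor serial for $1\leq i\leq 2$). Hence $|a^+|\leq 2$ for every vertex $a$.

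The only genuinely non-formal ingredient is the opening identity $\dim_K(\mathrm{rad}(P(a))/\mathrm{rad}^2(P(a)))=|a^+|$; everything after it is a direct application of Lemma \ref{23}, and connectedness of $Q$ is not actually needed. I would therefore expect the main (modest) obstacle to be pinning down the correct orientation convention so that the right-module projective $P(a)$ genuinely sees the arrows \emph{starting} at $a$ rather than those ending at $a$ --- a point easily checked against the running example with quiver $2\leftarrow 3\rightarrow 1$, where vertex $3$ has $|3^+|=2$ and $P(3)$ is precisely the length-$3$, Loewy-length-$2$ indecomposable $2$-factor serial projective predicted by Proposition \ref{PP}.
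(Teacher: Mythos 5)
Your proof is correct, and it takes a genuinely different (and cleaner) route than the paper's. The paper also argues by contradiction from a vertex $a_0$ with $|a_0^+|\geq 3$, but it stays with the projective $P(a_0)$ itself: after invoking Proposition \ref{19} (left seriality, hence $|a^-|\leq 1$ by a theorem of Assem--Simson--Skowro\'nski), it splits into two cases according to whether $Q$ has a cycle through $a_0$, and in each case writes down explicit families of subrepresentations of $P(a_0)$ of length $\geq 2$ whose existence violates the structure forced by Lemma \ref{18}, so that $P(a_0)$ is neither uniserial nor $2$-factor serial --- the desired contradiction. You instead pass to the truncation $M=P(a)/\mathrm{rad}^2(P(a))$, which is local (hence indecomposable) with $\mathrm{rad}^2(M)=0$ and length $1+|a^+|\geq 4$, and apply Lemma \ref{23}(b) to conclude that $M$ is $|a^+|$-factor serial with $|a^+|\geq 3$, contradicting the right $2$-Nakayama hypothesis directly; the uniqueness of the factor-seriality index (built into the definition) makes this immediate. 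Your route is shorter and avoids both the left-serial input and the case distinction on cycles; the only quiver-theoretic ingredient is the standard identity $\mathrm{rad}(P(a))/\mathrm{rad}^2(P(a))\cong\bigoplus_{\alpha\in a^+}S(t(\alpha))$, valid because $I$ is admissible, and you correctly pin down the orientation convention (right modules over $KQ/I$ with left-to-right path composition see the arrows \emph{starting} at $a$), which matches the paper's running examples. You are also right that connectedness is irrelevant; the paper does not use it either. What the paper's longer argument buys is essentially nothing beyond staying within its own characterization of $2$-factor serial modules (Lemma \ref{18}); your argument via Lemma \ref{23} is equally internal to the paper and, if anything, makes the mechanism of the bound more transparent: an offending vertex immediately produces a radical-square-zero local module that is too far from uniserial.
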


\begin{proof} By Proposition \ref{19}, $\Lambda$ is left serial and so by \cite[Theorem $V.2.6$]{As}, for any vertex  $a\in Q_0$, $|a^-|\leq1$. Assume that there exists a vertex $a_0\in Q_0$, such that $|a_0^+|\geq 3$. We have two cases:
\begin{itemize}
\item Case $(1)$: There is no cycle in $Q$ which is contains the vertex $a_0$\\
$$\hskip .5cm \xymatrix{
&\ar[d]\\
&{a_0}\ar[dl]_{\alpha_n}\ar[dr]_{\alpha_2}\ar[drr]^{\alpha_1}\\
{a_n}&\cdots&{a_2}&{a_1}
}\hskip .5cm$$

\item Case $(2)$: There exists at least one cycle in $Q$ which is contains the vertex $a_0$.

$$\hskip .5cm \xymatrix{
&\cdots\ar @/_1pc/[dl]_{\alpha_{n+t-1}}\\
{a_{n+t-1}}\ar [d]_{\alpha_{n+t}}&&{a_{n+1}}\ar@/_1pc/[ul]_{\alpha_{n+2}}\\
{a_{n+t}}\ar @/_1pc/[dr]_{\alpha_{n+t+1}}&&{a_{n}}\ar [u]_{\alpha_{n+1}}\\
&{a_0}\ar[dl]_{\alpha_{n-1}}\ar[dr]_{\alpha_2}\ar[drr]^{\alpha_1}\ar@/_1pc/[ur]_{\alpha_{n}}\\
{a_{n-1}}&\cdots&{a_2}&{a_1}
}\hskip .5cm$$
In the case $\left( 1\right) $, the indecomposable projective representation $ P(a_0)=(P_j, \phi_{\alpha})_{(j\in Q_0,\ \alpha \in Q_1)}$ has at least three subrepresentations of dimension $d \geq 2$ of the forms $M_i=(M_{ij}, \psi_{i\alpha})_{(j\in Q_0,\ \alpha\in Q_1)}$ for $1\leq i\leq n$ such that
\begin{center}
$M_{ij}=\begin{cases}
0& j=a_i,\,  a_0,\\
P_j& otherwise,
\end{cases}$
\end{center}
 and
 \begin{center}
 $\psi_{i\alpha}=\begin{cases}
 0& \alpha=\alpha_i,\,\ 1\leq i\leq n,\\
  \phi_{\alpha} & otherwise.
 \end{cases}$
 \end{center}
Then by Lemma \ref{18}, $P(a_0)$ is not right $2$-factor serial. Also $P(a_0)$ is not uniserial which is a contradiction.\\
In the case $\left( 2\right) $, the indecomposable projective representation $P(a_0)=(P_j, \phi_{\alpha})_{(j\in Q_0,\ \alpha \in Q_1)}$ has at least two subrepresentations of dimension $d\geq 2$ of the forms $M_i=(M_{ij}, \psi_{i\alpha})_{(j\in Q_0,\ \alpha \in Q_1)}$ for $1\leq i\leq n-1$ such that
\begin{center} $M_{ij}=\begin{cases}
K^{\lambda_j-1}&  j=a_i,\,  a_0,\\
P_j& otherwise,
\end{cases}$
\end{center}
 and
 \begin{center}
 $\psi_{i\alpha}=\begin{cases}
\phi_\alpha\mid _{K^{\lambda_j-1}}  & \alpha=\alpha_i, \\
  \phi_{\alpha} & otherwise,
 \end{cases}$
 \end{center}
where $P_j=K^{\lambda_j}$ for some positive integer $\lambda_j$. Then by Lemma \ref{18}, $P(a_0)$ is not right $2$-factor serial. Also $P(a_0)$ is not uniserial which is a contradiction.
\end{itemize}
\end{proof}

\begin{remark}\label{29}
Let $Q$ be a finite quiver, $I$ be an admissible ideal of $Q$, $Q^{'}$ be a subquiver of $Q$ and $I^{'}$ be an admissible ideal of $Q^{'}$ which is restriction of $I$ to $Q^{'}$. Then there exists a fully faithful embedding functor $F:rep_K(Q^{'}, I^{'})\longrightarrow rep_K(Q, I)$.

\end{remark}

\begin{theorem}\label{30} Let $ \Lambda=\frac{KQ}{I} $ be a basic and connected finite dimensional $K$-algebra.
Then $\Lambda $ is right $2$-Nakayama if and only if $\left(Q, I\right)$ satisfies the following conditions:
\begin{itemize}
\item[$\left( \mathrm{i}\right) $] For every $a\in Q_0$, $\mid a^+\mid \leqslant  2$ and $\mid a^-\mid \leqslant 1$.
\item[$\left( \mathrm{ii}\right) $] For every $\alpha \in Q_1$, there is at most one arrow $\beta$ that $\alpha\beta\not\in I$.
\item [$\left( \mathrm{iii}\right) $] There exists $a\in Q_0$ that $\mid a^+\mid =2$.
\item [$\left(\mathrm{iv}\right) $] For every non-lazy paths $\sigma$ and $ \gamma$, that $\sigma$ and $ \gamma$ have the same sources and neither $\sigma$ is a subpath of $\gamma$ nor $\gamma$ is a subpath of $\sigma$,
  $length\left( \sigma \right) +length\left( \gamma\right) =2$.
\end{itemize}
 \end{theorem}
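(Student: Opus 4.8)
The plan is to prove both directions by translating the structural conditions (i)--(iv) into the module-theoretic characterization of right $2$-Nakayama algebras established earlier, namely that $\Lambda$ is right $2$-Nakayama precisely when every indecomposable non-projective right module is uniserial and at least one indecomposable is $2$-factor serial (equivalently, by Propositions \ref{PP} and \ref{P20}, when every indecomposable projective is either uniserial or $2$-factor serial of length $3$ and Loewy length $2$). I would first establish the forward direction, assuming $\Lambda$ is right $2$-Nakayama. Condition (i) on $|a^-|\leq 1$ follows from Proposition \ref{19} together with the cited left-serial criterion \cite[Theorem $V.2.6$]{As}, exactly as in Proposition \ref{28}; the bound $|a^+|\leq 2$ is precisely Proposition \ref{28}. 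Condition (iii) holds because a right $2$-Nakayama algebra is not Nakayama, so some vertex must have two outgoing arrows (otherwise, combined with $|a^-|\leq 1$, the quiver would force $\Lambda$ to be Nakayama). For (ii), I would argue that if some arrow $\alpha$ had two distinct arrows $\beta_1,\beta_2$ with $\alpha\beta_1,\alpha\beta_2\notin I$, then the projective at the source of $\alpha$ would contain a non-uniserial submodule of radical depth too large, contradicting the Loewy-length-$2$ description of indecomposable $2$-factor serial projectives in Proposition \ref{PP}.

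The genuinely new content is condition (iv), and proving it is where I expect the main work to lie. Having fixed (i)--(iii), every indecomposable projective $P(a)$ is determined by the paths out of $a$ that avoid $I$. The point of (iv) is a length bound: if $\sigma,\gamma$ are two non-lazy paths from a common source with neither a subpath of the other, then the submodule of the relevant projective they generate is non-uniserial, so by Lemma \ref{18} it can be $2$-factor serial only if its length is exactly one more than its Loewy length, forcing $\mathrm{length}(\sigma)+\mathrm{length}(\gamma)=2$, i.e. both paths have length $1$. I would make this precise by examining the projective representation $P(s)$ at the common source $s$ and showing that two genuinely branching paths of total length exceeding $2$ produce a submodule violating the ``$\mathrm{rad}^{i}$ is local for $i\leq t-2$'' clause of Lemma \ref{18}(b); this is the same mechanism used in the two cases of the proof of Proposition \ref{28}.

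For the converse, I would assume $(Q,I)$ satisfies (i)--(iv) and verify that $\Lambda$ is right $2$-Nakayama by checking each indecomposable projective $P(a)$ directly. Conditions (i) and (ii) guarantee that $P(a)$ is a string module whose underlying string branches at most once and, by (iv), only at the very top; condition (iv) then forces every such branch to consist of two arrows of length $1$, so each $P(a)$ is either uniserial or is the length-$3$, Loewy-length-$2$ module with $\mathrm{rad}(P(a))=\mathrm{soc}(P(a))=S_1\oplus S_2$ described in Proposition \ref{PP}. By Lemma \ref{18} the latter are exactly the indecomposable $2$-factor serial modules, and by the classification in Proposition \ref{P20} every indecomposable quotient of a uniserial or such a projective is again uniserial or $2$-factor serial; condition (iii) ensures at least one $2$-factor serial projective exists. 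Hence every indecomposable right $\Lambda$-module is $1$- or $2$-factor serial with at least one $2$-factor serial, which is the definition of right $2$-Nakayama. The main obstacle throughout is bookkeeping the passage between the combinatorics of paths modulo $I$ and the radical filtration of the corresponding projective representation, so that Lemma \ref{18} can be applied cleanly; once (iv) is correctly interpreted as the statement that branchings occur only among length-$1$ paths, both implications reduce to the earlier structural results.
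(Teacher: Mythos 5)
Your forward direction is sound but genuinely different from the paper's: where you derive (ii) and (iv) directly from the length and Loewy-length constraints of Proposition \ref{PP} and Lemma \ref{18} applied to submodules of indecomposable projectives, the paper instead exhibits, for each way a condition can fail, an explicit bad subquiver with relations together with an explicit indecomposable representation that is $t$-factor serial for some $t\geq 3$, and transports it to $\Lambda$ via the fully faithful embedding of Remark \ref{29}. Your route is arguably more uniform (the paper's case list is finite and somewhat ad hoc), but you should note that the incomparability of $\alpha\beta_1\Lambda$ and $\alpha\beta_2\Lambda$ inside $\alpha\Lambda$, which you need to see that $\alpha\Lambda$ is not uniserial, is not automatic when $I$ contains non-monomial relations; it does follow once $|a^-|\leq 1$ has been established, so the order of the steps matters.

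The converse direction, however, has a genuine gap: you invoke Proposition \ref{P20} to conclude that every indecomposable module is a quotient of an indecomposable projective, but Proposition \ref{P20} has ``$\Lambda$ is right $2$-Nakayama'' as its hypothesis, which is exactly what you are trying to prove, so the argument is circular. Verifying that every indecomposable projective is either uniserial or the length-$3$, Loewy-length-$2$ module says nothing, by itself, about non-local indecomposables. The Kronecker algebra illustrates the danger: both of its indecomposable projectives are uniserial or $2$-factor serial, yet it is representation-infinite (it violates (i), of course, but the point is that the implication ``all indecomposable projectives are uniserial or $2$-factor serial $\Rightarrow$ right $2$-Nakayama'' is false in general, so the conditions (i)--(iv) must be used to control \emph{arbitrary} indecomposables, not just projectives and their quotients). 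This is precisely where the paper's converse does its real work: conditions (i), (ii) and (iv) make $\Lambda$ a string algebra, so by the Butler--Ringel theorem every indecomposable is a string module $M(w)$ or a band module; the conditions then force $\mathcal{B}(\Lambda)=\varnothing$ and force every string to be either direct, $w=w_1^{+1}\cdots w_n^{+1}$, giving a uniserial module, or of the form $w=w_1^{-1}w_2^{+1}$, giving a $2$-factor serial module, while (iii) guarantees a string of the second kind exists. Your proof needs this argument (or some substitute that bounds all indecomposables), and without it the converse does not go through.
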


 \begin{proof} Assume that $\Lambda$ is a right $2$-Nakayama algebra. By Proposition \ref{19}, $\Lambda$ is a left serial algebra. Then by \cite[Theorem V.2.6.]{As}, for any $a\in Q_0$, $|a^-|\leq1$.
 By Proposition \ref{28}, for every $a\in Q_0$,  $\mid a^{+}\mid \leq 2$. It is clear that there exists $a\in Q_0$ that $\mid a^+\mid =2$.
 Assume that the condition $\left(\mathrm{ii}\right) $ does not hold. Then we have two cases.
\begin{itemize}
 \item Case $(1)$: The Algebra $\Lambda'=KQ'$ given by the quiver $Q'$
 \begin{center}
 $$\hskip .5cm \xymatrix{
&&{1}\ar @{<-}[dl]_{\beta}\\
{4}\ar [r]^{\alpha}&{3}\ar [dr]_{\gamma}\\
&&{2}}\hskip .5cm$$
\end{center}
is a subalgebra of $\Lambda$.
 Then there is an indecomposable representation $M$ of $Q'$ such that $\mathbf{dim} M=[1, 1, 2, 1]^t$. $M$ is not local and by Proposition \ref{4}, $M$ is a $5$-factor serial right $\Lambda'$-module. Therefore by using Remark \ref{29}, there exists a $5$-factor serial right $\Lambda$-module which is a contradiction.
  \item Case $(2)$: The Algebra $\Lambda''=\frac{KQ''}{I''}$ given by the quiver $Q''$
  \begin{center}
$\begin{matrix}\xymatrix{{2}\ar@(ul,dl)_{\alpha} \ar[r]^{\beta}&{1}}
\end{matrix}\hskip.5cm$
\end{center}
bounded by ${\alpha}^3=0$ and $\alpha^{2}\beta=0$ is a subalgebra of $\Lambda$. There is an indecomposable representation $M$ of $(Q'', I'')$ such that $\mathbf{dim} M=[2, 4]^t$ and $M$ is not local. Then by Proposition \ref{4}, $M$ is a $6$-factor serial right $\Lambda''$-module. Therefore by using Remark \ref{29}, there exists a $6$-factor serial right $\Lambda$-module which is a contradiction.
\end{itemize}
Now suppose that the condition $\left( \mathrm{iv}\right) $ does not hold. Then we have four cases.
\begin{itemize}
 \item Case $(1)$: The Algebra $\Lambda_1=KQ_1$ given by the quiver $Q_1$
 \begin{center}
\begin{tikzpicture}[main_node/.style={
                                         }
                                         ]
\tikzset{edge/.style = {->,> = latex'}}

 \node[main_node] (4) at (0, 0) {4};
    \node[main_node] (3) at (1,1) {3};
      \node[main_node] (2) at (1,0) {2};
    \node[main_node] (1) at (2, 2)  {1};

    \draw[->]  (4) edge node[above] {\(\alpha\)} (3);
    \draw[->]  (4) edge node[below] {\(\beta\)} (2);
     \draw[->]  (3) edge node[above] {\(\gamma\)} (1);

\end{tikzpicture}
\end{center}
is a subalgebra of $\Lambda$. Then there is an indecomposable representation $M$ of $Q_1$ such that $\mathbf{dim} M=[1, 1, 1, 1]^t$ and $M$ is a $3$-factor serial right $\Lambda_1$-module which gives a contradiction.
  \item Case $(2)$: The Algebra $\Lambda_2=\frac{KQ_2}{I_2}$ given by the quiver $Q_2$
  \begin{center}
$\begin{matrix}\xymatrix{{2}\ar@(ul,dl)_{\alpha} \ar[r]^{\beta}&{1}}
\end{matrix}\hskip.5cm$
\end{center}
bounded by ${\alpha}^3=0$ and $\alpha\beta=0$ is a subalgebra of $\Lambda$. Let $M$ be an indecomposable representation of $(Q_2, I_2)$ of dimension vector $\mathbf{dim} M=[1, 3]^t$. Then $M$ is a $3$-factor serial right $\Lambda_2$-module which gives a contradiction.
\item Case $(3)$: The Algebra $\Lambda_3=\frac{KQ_3}{I_3}$ given by the quiver $Q_3$
\begin{center}
$\begin{matrix}\xymatrix{{2}\ar@(ul,dl)_{\alpha} \ar[r]^{\beta}&{1}}
\end{matrix}\hskip.5cm$
\end{center}
bounded by ${\alpha}^2=0$ is a subalgebra of $\Lambda$. Let $M$ be an indecomposable representation of $(Q_3, I_3)$ of dimension vector $\mathbf{dim} M=[2, 2]^t$. Then $M$ is a $3$-factor serial right $\Lambda_3$-module which gives a contradiction.
  \item Case $(4)$: The Algebra $\Lambda_4=\frac{KQ_4}{I_4}$ given by the quiver $Q_4$
  \begin{center}
$\begin{matrix}\xymatrix{{3}\ar@(ul,dl)_{\alpha} \ar[r]^{\beta}&{2}\ar[r]^{\gamma}&{1}}
\end{matrix}\hskip.5cm$
\end{center}
bounded by ${\alpha}^2=0$ and $\alpha\beta=0$ is a subalgebra of $\Lambda$. Let $M$ be an indecomposable representation of $(Q_4, I_4)$ of dimension vector $\mathbf{dim} M=[1, 1, 2]^t$. Then $M$ is a $3$-factor serial right $\Lambda_4$-module which gives a contradiction.
\end{itemize}

Conversely, assume that $\Lambda=KQ/I$ such that $(Q, I)$ satisfies the conditions $(i), (ii), (iii)$ and $(iv)$. It is clear that $\Lambda$ is a string algebra. By the main Theorem of \cite{BR}, every indecomposable right $\Lambda$-module is either a string module $M(w)$ for some $w\in \widetilde{\mathcal{S}}(\Lambda)$ or a band module $M(v, m, \varphi)$ for some $v\in \widetilde{\mathcal{B}}(\Lambda)$, $m\geq 1$ and $\varphi\in Aut(K^m)$. By the conditions $\left( \mathrm{i}\right)$, $\left( \mathrm{ii}\right)$ and $\left( \mathrm{iv}\right)$, $\mathcal{B}(\Lambda)=\varnothing$ and for any $w\in \widetilde{\mathcal{S}}(\Lambda)$, $w=w_1^{+1}...w_n^{+1}$ for some positive integer $n$ or $w=w_1^{-1}w_2^{+1}$. If $w=w_1^{+1}...w_n^{+1}$, then $M(w)$ is uniserial and if $w=w_1^{-1}w_2^{+1}$, then $M(w)$ is $2$-factor serial. By the condition $\left( \mathrm{iii}\right)$, there exists at least one string module $M(w)$, that $w=w_1^{-1}w_2^{+1}$. Therefore $\Lambda$ is right $2$-Nakayama and the result follows.
\end{proof}


\section*{acknowledgements}
The authors would like to thank to the referee for a careful reading of this paper. Also we are thankful to Professor C. M. Ringel for many helpful
discussions, comments and suggestions. In particular, he suggested to use the word $n$-factor serial module. The research of the first
author was in part supported by a grant from IPM (No. 96170417).


\begin{thebibliography}{10}

\bibitem{Asa} \textsc{H. Asashiba}, The Auslander-Reiten quiver of a ring of right local type, \emph{Osaka J. Math.} \textbf{28}(1991), 63-84.
\bibitem{As} \textsc{I. Assem, D. Simson, A. Skowronski}, Elements of the Representation Theory of Associative Algebra,
Techniques of representation theory, vol. 1, London Mathematical Society Student Texts \textbf{65}, Cambridge University Press, Cambridge, 2006.
\bibitem{A2} \textsc{M. Auslander}, Representation theory of Artin algebras II, \emph{Comm. Algebra} \textbf{1}(1974), 269-310.
\bibitem{ARS} \textsc{M. Auslander, I. Reiten, S. O. Smal{\o}}, \emph{Representation theory of Artin algebras}, Cambridge studies in
advanced mathematics \textbf{36}, Cambridge University Press, Cambridge, 1995.
\bibitem{B} \textsc{R. Bautista}, On algebras of strongly unbounded representation type, \emph{Comment.
Math. Helv.} \textbf{60(3)} (1985), 392-399.
\bibitem{BR} \textsc{M. C. R. Butler, C. M. Ringel}, Auslander-Reiten sequences with few middle terms, with applications to string algebras, \emph{Comm. Algebra} \textbf{15}(1987), 145-179.
\bibitem{F} \textsc{U. Fischbacher}, Zur Kombinatorik der Algebren mit endlich vielen Idealen, \emph{J. Reine Angew. Math.} \textbf{370}(1986), 192-213.
\bibitem{G} \textsc{P. Gabriel}, Unzerlegbare Darstellungen. I, \emph{Manuscripta Math.} \textbf{6}(1972), 71-103.
\bibitem{Na} \textsc{T. Nakayama}, Note on uniserial and generalized uniserial rings, \emph{Proc. Imp. Acad. Japan} \textbf{16(7)}(1940), 285-289.
\bibitem{Na1} \textsc{T. Nakayama}, On Frobenius algebras. II., \emph{Ann. Math., } \textbf{42}(1941), 1-21.
\bibitem{NS1} \textsc{A. Nasr-Isfahani, M. Shekari}, A characterization of right $4$-Nakayama artin algebras, \emph{arXiv:1812.07392}.
\bibitem{NS} \textsc{A. Nasr-Isfahani, M. Shekari}, Representations of right $3$-Nakayama algebras, \emph{arXiv:1805.04412}.
\bibitem{NR} \textsc{L. A. Nazarova, A. V. Roiter}, Categorical matrix problems and the Brauer-Thrall conjecture,
\emph{Mitt. Math. Sem. Giessen} \textbf{115}(1975), 1-153.
\bibitem{R} \textsc{C. M. Ringel}, On algorithms for solving vector space problems. I. Report
on the Brauer-Thrall conjectures: Rojter's theorem and the theorem of
Nazarova and Rojter. \emph{Representation theory, I} (Proc. Workshop, Carleton
Univ., Ottawa, Ont., 1979), pp. 104-136, Lecture Notes in Math., 831,
Springer, Berlin, 1980.
\bibitem{Ro} \textsc{A. V. Roiter}, The unboundedness of the dimension of the indecomposable representations of
algebras that have an infinite number of indecomposable representations, \emph{Izv. Akad. Nauk. SSSR.
Ser. Mat.} \textbf{32}(1968), 1275-1282.
\end{thebibliography}
\end{document}